\theoremstyle{plain}
\newtheorem{thm}{Theorem}[section]
\newtheorem*{thm*}{Theorem}
\newtheorem*{cor*}{Corollary}
\newtheorem*{prop*}{Proposition}
\newtheorem*{mthm}{Main Theorem}
\newtheorem{prop}[thm]{Proposition}
\newtheorem{lem}[thm]{Lemma}
\newtheorem{cor}[thm]{Corollary}
\newtheorem{claim}{Claim}
\newtheorem*{claim*}{Claim}
\theoremstyle{definition}
\newtheorem{dfn}[thm]{Definition}
\newtheorem{rem}[thm]{Remark}
\newtheorem{ex}[thm]{Example}
\newtheorem*{conj*}{Conjecture}
\newtheorem*{ac}{{\sc Acknowledgments}}
\theoremstyle{remark}
\newtheorem*{cpf}{{\it Proof of Claim}}
\numberwithin{equation}{thm}
\def\Hom{\operatorname{Hom}}
\def\lhom{\operatorname{\underline{Hom}}}
\def\Ext{\operatorname{Ext}}
\def\Tor{\operatorname{Tor}}
\def\mod{\operatorname{mod}}
\def\res{\operatorname{res}}
\def\add{\operatorname{add}}
\def\CM{\operatorname{CM}}
\def\lCM{\operatorname{\underline{CM}}}
\def\tr{\operatorname{Tr}}
\def\m{\mathfrak m}
\def\p{\mathfrak p}
\def\q{\mathfrak q}
\def\Ob{\operatorname{Ob}}
\def\depth{\operatorname{depth}}
\def\Supp{\operatorname{Supp}}
\def\lSupp{\operatorname{\underline{Supp}}}
\def\Ann{\operatorname{Ann}}
\def\Ass{\operatorname{Ass}}
\def\Min{\operatorname{Min}}
\def\pd{\operatorname{pd}}
\def\id{\operatorname{id}}
\def\hdim{\operatorname{{\bf H}-dim}}
\def\grade{\operatorname{grade}}
\def\Spec{\operatorname{Spec}}
\def\Sing{\operatorname{Sing}}
\def\Gdim{\operatorname{G-dim}}
\def\CMdim{\operatorname{CM-dim}}
\def\ext{\operatorname{ext}}
\def\A{{\mathcal A}}
\def\B{{\mathcal B}}
\def\F{{\mathcal F}}
\def\X{{\mathcal X}}
\def\Y{{\mathcal Y}}
\def\V{{\mathcal V}}
\def\v{\operatorname{V}}
\def\s{{\mathcal S}}
\def\ccc{{\mathcal C}}
\def\xx{\text{\boldmath $x$}}
\def\yy{\text{\boldmath $y$}}
\def\ng{\operatorname{nonGor}}
\begin{document}

\setlength{\baselineskip}{15pt}

\title[Thick subcategories of the stable category]{Classifying thick subcategories of the stable category of Cohen-Macaulay modules}
\author{Ryo Takahashi}
\address{Department of Mathematical Sciences, Faculty of Science, Shinshu University, 3-1-1 Asahi, Matsumoto, Nagano 390-8621, Japan}
\email{takahasi@math.shinshu-u.ac.jp}
\keywords{thick subcategory, triangulated category, stable category, resolving subcategory, support, specialization-closed subset, nonfree locus, Cohen-Macaulay module, Cohen-Macaulay ring, Gorenstein ring, hypersurface}
\subjclass[2000]{13C05, 13C14, 13H10, 16D90, 16G60, 18E30}
\begin{abstract}
Various classification theorems of thick subcategories of a triangulated category have been obtained in many areas of mathematics.
In this paper, as a higher-dimensional version of the classification theorem of thick subcategories of the stable category of finitely generated representations of a finite $p$-group due to Benson, Carlson and Rickard, we consider classifying thick subcategories of the stable category of Cohen-Macaulay modules over a Gorenstein local ring.
The main result of this paper yields a complete classification of the thick subcategories of the stable category of Cohen-Macaulay modules over a local hypersurface in terms of specialization-closed subsets of the prime ideal spectrum of the ring which are contained in its singular locus.
\end{abstract}
\maketitle
\section*{Introduction}\label{intro}

One of the principal approaches to the understanding of the structure of a given category is classifying its subcategories having a specific property.
It has been studied in many areas of mathematics which include stable homotopy theory, ring theory, algebraic geometry and modular representation theory.
A landmark result in this context was obtained in the definitive work due to Gabriel \cite{G} in the early 1960s.
He proved a classification theorem of the localizing subcategories of the category of modules over a commutative noetherian ring by making a one-to-one correspondence between the set of those subcategories and the set of specialization-closed subsets of the prime ideal spectrum of the ring.
A lot of analogous classification results of subcategories of modules have been obtained by many authors; see \cite{Hov,wide,K,GP,GP2,GP3} for instance.

For a triangulated category, a high emphasis has been placed on classifying its {\em thick} subcategories, namely, full triangulated subcategories which are closed under taking direct summands.
The first classification theorem was obtained in the deep work on stable homotopy theory due to Devinatz, Hopkins and Smith \cite{DHS,HS}.
They classified the thick subcategories of the category of compact objects in the $p$-local stable homotopy category.
Hopkins \cite{Hop} and Neeman \cite{N} provided a corresponding classification result of the thick subcategories of the derived category of perfect complexes (i.e., bounded complexes of finitely generated projective modules) over a commutative noetherian ring by making a one-to-one correspondence between the set of those subcategories and the set of specialization-closed subsets of the prime ideal spectrum of the ring.
Thomason \cite{To} generalized the theorem of Hopkins and Neeman to quasi-compact and quasi-separated schemes, in particular, to arbitrary commutative rings and algebraic varieties.
Recently, Avramov, Buchweitz, Christensen, Iyengar and Piepmeyer \cite{ABCIP} gave a classification of the thick subcategories of the derived category of perfect differential modules over a commutative noetherian ring.
On the other hand, Benson, Carlson and Rickard \cite{BCR} classified the thick subcategories of the stable category of finitely generated representations of a finite $p$-group in terms of closed homogeneous subvarieties of the maximal ideal spectrum of the group cohomology ring.
Friedlander and Pevtsova \cite{FP} extended this classification theorem to finite group schemes.
A recent work of Benson, Iyengar and Krause \cite{BIK2} gives a new proof of the theorem of Benson, Carlson and Rickard.
A lot of other related results concerning thick subcategories of a triangulated category have been obtained.
For example, see \cite{B3,B,B2,K2,BKS,BIK,I,Br,DGI,hcls}.

Here we mention that in most of the classification theorems of subcategories stated above, the subcategories are classified in terms of certain sets of prime ideals.
Each of them establishes an assignment corresponding each subcategory to a set of prime ideals, which is (or should be) called the {\em support} of the subcategory.

In the present paper, as a higher-dimensional version of the work of Benson, Carlson and Rickard, we consider classifying thick subcategories of the stable category of Cohen-Macaulay modules over a Gorenstein local ring, through defining a suitable support for those subcategories.
Over a hypersurface we shall give a complete classification of them in terms of specialization-closed subsets of the prime ideal spectrum of the base ring contained in its singular locus.
To state in the following our results precisely, we introduce some notation.
Let $R$ be a (commutative) Cohen-Macaulay local ring.
We denote by $\mod R$ the category of finitely generated $R$-modules, by $\CM(R)$ the full subcategory of $\mod R$ consisting of all (maximal) Cohen-Macaulay $R$-modules, and by $\lCM(R)$ the stable category of $\CM(R)$.
It is a well-known result due to Buchweitz \cite{Bu} that $\lCM(R)$ is a triangulated category if $R$ is Gorenstein.
Let $\Spec R$ denote the prime ideal spectrum of $R$, that is, the set of prime ideals of $R$, and let $\Sing R$ denote the singular locus of $R$, that is, the set of prime ideals $\p$ of $R$ such that the local ring $R_\p$ is singular.
We denote the $n^\text{th}$ syzygy of an $R$-module $M$ by $\Omega^nM$.
The main result of this paper is the following theorem, which is part of Theorem \ref{main}.

\begin{mthm}
\begin{enumerate}[\rm (1)]
\item
Let $R$ be an abstract hypersurface local ring (i.e., the completion of $R$ is isomorphic to $S/(f)$ for some complete regular local ring $S$ and some element $f$ of $S$).
Then one has the following one-to-one correspondences:
$$
\begin{CD}
\{\text{thick subcategories of }\lCM(R)\}\\
\begin{matrix}
@V{\lSupp}VV @AA{\lSupp^{-1}}A
\end{matrix}
\\
\{\text{specialization-closed subsets of }\Spec R\text{ contained in }\Sing R\}\\
\begin{matrix}
@V{\V_{\CM}^{-1}}VV @AA{\V}A \\
\end{matrix}
\\
\{\text{resolving subcategories of }\mod R\text{ contained in }\CM(R)\}.
\end{CD}
$$
\item
Let $R$ be a $d$-dimensional Gorenstein singular local ring with residue field $k$ which is locally an abstract hypersurface on the punctured spectrum.
Then one has the following one-to-one correspondences:
$$
\begin{CD}
\{\text{thick subcategories of }\lCM(R)\text{ containing }\Omega^dk\}\\
\begin{matrix}
@V{\lSupp}VV @AA{\lSupp^{-1}}A
\end{matrix}
\\
\{\text{nonempty specialization-closed subsets of }\Spec R\text{ contained in }\Sing R\}\\
\begin{matrix}
@V{\V_{\CM}^{-1}}VV @AA{\V}A \\
\end{matrix}
\\
\{\text{resolving subcategories of }\mod R\text{ contained in }\CM(R)\text{ containing }\Omega^dk\}.
\end{CD}
$$
\end{enumerate}
\end{mthm}

In this theorem, a {\em resolving subcategory} means a full subcategory containing the free modules which is closed under taking direct summands, extensions and syzygies.
The notion of a resolving subcategory was introduced by Auslander and Bridger \cite{AB} in the late 1960s, and a lot of important resolving subcategories of modules are known; see Example \ref{resex}.
The symbol $\lSupp$, which we call the {\em stable support}, is a support for the stable category of Cohen-Macaulay modules.
It is defined quite similarly to the ordinary support for modules; the stable support of an object $M$ of $\lCM(R)$ is the set of prime ideals $\p$ of $R$ such that the localization $M_\p$ is not isomorphic to $0$ in $\lCM(R_\p)$, and the stable support of a subcategory of $\lCM(R)$ is the union of the stable supports of all (nonisomorphic) objects in it.
The symbol $\V$ denotes the {\em nonfree locus}; the nonfree locus of an object $M$ of $\mod R$ is the set of prime ideals $\p$ such that $M_\p$ is nonfree as an $R_\p$-module, and the nonfree locus of a subcategory of $\mod R$ is the union of the nonfree loci of all objects in it.
The inverse maps are defined as follows: for a subset $\Phi$ of $\Spec R$, $\lSupp^{-1}(\Phi)$ (respectively, $\V_{\CM}^{-1}(\Phi)$) is the full subcategory of $\lCM(R)$ (respectively, $\CM(R)$) consisting of all objects whose stable supports (respectively, nonfree loci) are contained in $\Phi$.

The above theorem especially enables us to reconstruct each thick subcategory of $\lCM(R)$ (respectively, resolving subcategory contained in $\CM(R)$) from its support (respectively, nonfree locus).
We will actually prove more general statements than the above theorem; see Theorems \ref{origin}, \ref{lochp}, \ref{tilde} and \ref{main}.

Very recently, after the work in this paper was completed, Iyengar announced in his lecture \cite{I2} that thick subcategories of the bounded derived category of finitely generated modules over a locally complete intersection which is essentially of finite type over a field are classified in terms of certain subsets of the prime ideal spectrum of the Hochschild cohomology ring.
This provides a classification of thick subcategories of the stable category of Cohen-Macaulay modules over such a ring, which is a different classification from ours.


\section*{Convention}\label{convent}
In the rest of this paper, we assume that all rings are commutative and noetherian, and that all modules are finitely generated.
Unless otherwise specified, let $R$ be a local ring of Krull dimension $d$.
The unique maximal ideal of $R$ and the residue field of $R$ are denoted by $\m$ and $k$, respectively.
By a {\em subcategory}, we always mean a full subcategory which is closed under isomorphism.
(A full subcategory $\X$ of a category $\ccc$ is said to be closed under isomorphism provided that for two objects $M,N$ of $\ccc$ if $M$ belongs to $\X$ and $N$ is isomorphic to $M$ in $\ccc$, then $N$ also belongs to $\X$.)
Note that a subcategory in our sense is uniquely determined by the isomorphism classes of the objects in it.

\section{Preliminaries}\label{prelim}

In this section, we give several basic notions and results which will be used later.
We begin with recalling the definitions of the syzygies and the transpose of a module.

\begin{dfn}
Let $n$ be a nonnegative integer, and let $M$ be an $R$-module.
Let
$$
\cdots \overset{\partial_{n+1}}{\to} F_n \overset{\partial_n}{\to} F_{n-1} \overset{\partial_{n-1}}{\to} \cdots \overset{\partial_2}{\to} F_1 \overset{\partial_1}{\to} F_0 \to M \to 0
$$
be a minimal free resolution of $M$.
\begin{enumerate}[(1)]
\item
The {\em $n^\text{th}$ syzygy} of $M$ is defined as the image of the map $\partial_n$, and we denote it by $\Omega^nM$ (or $\Omega_R^nM$ when there is some fear of confusion).
We simply write $\Omega M$ instead of $\Omega^1M$.
Note that the $n^\text{th}$ syzygy of a given $R$-module is uniquely determined up to isomorphism because so is a minimal free resolution.
\item
The {\em (Auslander) transpose} of $M$ is defined as the cokernel of the map $\Hom_R(\partial_1,R):\Hom_R(F_0,R)\to\Hom_R(F_1,R)$, and we denote it by $\tr M$ (or $\tr_RM$).
Note that the transpose of a given $R$-module is uniquely determined up to isomorphism because so is a minimal free resolution.
\end{enumerate}
\end{dfn}

Next, we make a list of several closed properties of a subcategory.

\begin{dfn}
\begin{enumerate}[(1)]
\item
Let $\ccc$ be an additive category and $\X$ a subcategory of $\ccc$.
\begin{enumerate}[(i)]
\item
We say that $\X$ is {\em closed under (finite) direct sums} provided that if $M_1,\dots,M_n$ are objects of $\X$, then the direct sum $M_1\oplus\cdots\oplus M_n$ in $\ccc$ belongs to $\X$.
\item
We say that $\X$ is {\em closed under direct summands} provided that if $M$ is an object of $\X$ and $N$ is a direct summand of $M$ in $\ccc$, then $N$ belongs to $\X$.
\end{enumerate}
\item
Let $\ccc$ be a triangulated category and $\X$ a subcategory of $\ccc$.
We say that $\X$ is {\em closed under triangles} provided that for each exact triangle $L \to M \to N \to \Sigma L$ in $\ccc$, if two of $L,M,N$ belong to $\X$, then so does the third.
\item
We denote by $\mod R$ the category of (finitely generated) $R$-modules.
Let $\X$ be a subcategory of $\mod R$.
\begin{enumerate}[(i)]
\item
We say that $\X$ is {\em closed under extensions} provided that for each exact sequence $0 \to L \to M \to N \to 0$ in $\mod R$, if $L$ and $N$ belong to $\X$, then so does $M$.
\item
We say that $\X$ is {\em closed under kernels of epimorphisms} provided that for each exact sequence $0 \to L \to M \to N \to 0$ in $\mod R$, if $M$ and $N$ belong to $\X$, then so does $L$.
\item
We say that $\X$ is {\em closed under syzygies} provided that if $M$ is an $R$-module in $\X$, then $\Omega^iM$ is also in $\X$ for all $i\ge 0$.
\end{enumerate}
\end{enumerate}
\end{dfn}

Let us make several definitions of subcategories.

\begin{dfn}
\begin{enumerate}[(1)]
\item
Let $\ccc$ be a category.
\begin{enumerate}[(i)]
\item
We call the subcategory of $\ccc$ which has no object the {\em empty subcategory} of $\ccc$.
\item
Suppose that $\ccc$ admits the zero object $0$.
We call the subcategory of $\ccc$ consisting of all objects that are isomorphic to $0$ the {\em zero subcategory} of $\ccc$.
\end{enumerate}
\item
A nonempty subcategory of a triangulated category is called {\em thick} if it is closed under direct summands and triangles.
\item
Let $\X$ be a subcategory of $\mod R$.
\begin{enumerate}[(i)]
\item
We say that $\X$ is {\em additively closed} if $\X$ is closed under direct sums and direct summands.
\item
We say that $\X$ is {\em extension-closed} if $\X$ is closed under direct summands and extensions.
\item
We say that $\X$ is {\em resolving} if $\X$ contains $R$, and if $\X$ is closed under direct summands, extensions and kernels of epimorphisms.
\end{enumerate}
\end{enumerate}
\end{dfn}

\begin{rem}
\begin{enumerate}[(1)]
\item
A resolving subcategory is a subcategory such that any two ``minimal'' resolutions of a module by modules in it have the same length; see \cite[Lemma (3.12)]{AB}.
\item
Let $\X$ be a subcategory of $\mod R$.
Then the following implications hold:
$$
\X\text{ is resolving}\quad\Rightarrow\quad\X\text{ is extension-closed}\quad\Rightarrow\quad\X\text{ is additively closed}.
$$
\item
Every resolving subcategory of $\mod R$ contains all free $R$-modules.
\end{enumerate}
\end{rem}

In the definition of a resolving subcategory, closedness under kernels of epimorphisms can be replaced with closedness under syzygies.

\begin{prop}\cite[Lemma 3.2]{Y}\label{resdef}
A subcategory $\X$ of $\mod R$ is resolving if and only if $\X$ contains $R$ and is closed under direct summands, extensions and syzygies.
\end{prop}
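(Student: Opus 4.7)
The plan is to prove both implications rather directly from the definitions, exploiting the standard observation that a free cover of a module fits into a short exact sequence whose kernel is a syzygy. A preliminary remark that I will use throughout: closure under extensions combined with the split sequence $0\to M\to M\oplus N\to N\to 0$ forces $\X$ to be closed under finite direct sums, so together with the assumption $R\in\X$ it follows that every finitely generated free $R$-module lies in $\X$.

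For the forward direction, assume $\X$ is resolving in the sense of the definition; I must show that $\Omega^iM\in\X$ for every $M\in\X$ and every $i\ge 0$. By induction on $i$ it suffices to handle $i=1$. Take a minimal free resolution of $M$ and consider the short exact sequence
$$0\to \Omega M\to F_0\to M\to 0,$$
where $F_0$ is a finitely generated free module and hence belongs to $\X$ by the preliminary remark. Since $M,F_0\in\X$, closure under kernels of epimorphisms yields $\Omega M\in\X$.

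For the converse, assume $\X$ contains $R$ and is closed under direct summands, extensions, and syzygies; I must show that $\X$ is closed under kernels of epimorphisms. Given a short exact sequence
$$0\to L\to M\to N\to 0$$
with $M,N\in\X$, choose a surjection $\pi\colon F\to N$ from a finitely generated free module $F$, with kernel $\Omega N$; by hypothesis $\Omega N\in\X$, and by the preliminary remark $F\in\X$. Form the pullback
$$P=M\times_N F.$$
Standard diagram chasing produces two short exact sequences
$$0\to \Omega N\to P\to M\to 0\quad\text{and}\quad 0\to L\to P\to F\to 0.$$
The first sequence shows $P\in\X$ by closure under extensions.

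The crucial step, and really the only point where something nontrivial happens, is the second sequence: since $F$ is projective, this sequence splits, giving $P\cong L\oplus F$. Then closure of $\X$ under direct summands yields $L\in\X$, as required. Apart from setting up the pullback correctly, there is no genuine obstacle; the only subtlety is recognizing that a free cover of $N$ is the right auxiliary object, so that syzygy-closure applied to $N$ provides the ingredient needed to realize $L$ as a summand of an extension built from modules already known to lie in $\X$.
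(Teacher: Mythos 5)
Your proof is correct. The paper offers no argument of its own for Proposition~\ref{resdef} (it simply cites \cite[Lemma 3.2]{Y}), and your pullback construction --- building $P=M\times_N F$ over a free cover of $N$ so that syzygy-closure supplies $\Omega N\in\X$, then splitting $P\cong L\oplus F$ off the projective base to land $L$ in $\X$ by closure under summands --- is precisely the standard route to this equivalence.
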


A lot of important subcategories of $\mod R$ are known to be resolving.
To present examples of a resolving subcategory, let us recall here several definitions of modules.
Let $M$ be an $R$-module.
We say that $M$ is {\em bounded} if there exists an integer $s$ such that $\beta_i^R(M)\le s$ for all $i\ge 0$, where $\beta_i^R(M)$ denotes the $i^\text{th}$ Betti number of $M$.
We say that $M$ has {\em complexity} $c$ if $c$ is the least nonnegative integer $n$ such that there exists a real number $r$ satisfying the inequality $\beta_i^R(M)\le ri^{n-1}$ for $i\gg 0$.
We call $M$ {\em semidualizing} if the natural homomorphism $R\to\Hom_R(M,M)$ is an isomorphism and $\Ext_R^i(M,M)=0$ for all $i>0$.
For a semidualizing $R$-module $C$, an $R$-module $M$ is called {\em totally $C$-reflexive} if the natural homomorphism $M\to\Hom_R(\Hom_R(M,C),C)$ is an isomorphism and $\Ext_R^i(M,C)=0=\Ext_R^i(\Hom_R(M,C),C)$ for all $i>0$.
A totally $R$-reflexive $R$-module is simply called a {\em totally reflexive} $R$-module.
We say that $M$ has {\em lower complete intersection dimension zero} if $M$ is totally reflexive and has finite complexity.
When $R$ is a Cohen-Macaulay local ring, we say that $M$ is {\em Cohen-Macaulay} if $\depth M=d$.
Such a module is usually called maximal Cohen-Macaulay, but in this paper, we call it just Cohen-Macaulay.
We denote by $\CM(R)$ the subcategory of $\mod R$ consisting of all Cohen-Macaulay $R$-modules.

\begin{ex}\label{resex}
Let $n$ be a nonnegative integer, $K$ an $R$-module, and $I$ an ideal of $R$.
The following $R$-modules form resolving subcategories of $\mod R$.
\begin{enumerate}[(1)]
\item
The $R$-modules.
\item
The free $R$-modules.
\item
The Cohen-Macaulay $R$-modules, provided that $R$ is Cohen-Macaulay.
\item
The totally $C$-reflexive $R$-modules, where $C$ is a fixed semidualizing $R$-module.
\item
The $R$-modules $M$ with $\Tor_i^R(M,K)=0$ for $i>n$ (respectively, $i\gg 0$).
\item
The $R$-modules $M$ with $\Ext_R^i(M,K)=0$ for $i>n$ (respectively, $i\gg 0$).
\item
The $R$-modules $M$ with $\Ext_R^i(K,M)=0$ for $i\gg 0$, provided that $\Ext_R^j(K,R)=0$ for $j\gg 0$.
\item
The $R$-modules $M$ with $\Ext_R^i(K,M)=0$ for $i<\grade K(:=\grade(\Ann K,R))$.
\item
The $R$-modules $M$ with $\grade(I,M)\ge\grade(I,R)$.
\item
The bounded $R$-modules.
\item
The $R$-modules having finite complexity.
\item
The $R$-modules of lower complete intersection dimension zero.
\end{enumerate}
Each of the resolving properties of these subcategories can be verified either straightforwardly or by referring to \cite[Example 2.4]{res}.
\end{ex}

Let $\ccc$ be a category, and let ${\bf P}$ be a property of subcategories of $\ccc$.
Let $\X$ be a subcategory of $\ccc$.
A subcategory $\Y$ of $\ccc$ satisfying ${\bf P}$ is said to be {\em generated by} $\X$ if $\Y$ is the smallest subcategory of $\ccc$ satisfying ${\bf P}$ that contains $\X$.

Now we state the definitions of the closures corresponding to an additively closed subcategory, an extension-closed subcategory and a resolving subcategory of $\mod R$.

\begin{dfn}
Let $\X$ be a subcategory of $\mod R$.
\begin{enumerate}[(1)]
\item
We denote by $\add\X$ (or $\add_R\X$ when there is some fear of confusion) the additively closed subcategory of $\mod R$ generated by $\X$, and call it the {\em additive closure} of $\X$.
If $\X$ consists of a single module $X$, then we simply write $\add X$ (or $\add_RX$) instead of $\add\X$.
\item
We denote by $\ext\X$ (or $\ext_R\X$) the extension-closed subcategory of $\mod R$ generated by $\X$, and call it the {\em extension closure} of $\X$.
If $\X$ consists of a single module $X$, then we simply write $\ext X$ (or $\ext_RX$) instead of $\ext\X$.
\item
We denote by $\res\X$ (or $\res_R\X$) the resolving subcategory of $\mod R$ generated by $\X$, and call it the {\em resolving closure} of $\X$.
If $\X$ consists of a single module $X$, then we simply write $\res X$ (or $\res_RX$) instead of $\res\X$.
\end{enumerate}
\end{dfn}

We easily observe that $\add\X$ coincides with the subcategory of $\mod R$ consisting of all direct summands of direct sums of modules in $\X$.
The additive and extension closures of a given subcategory of $\mod R$ can be constructed inductively.

\begin{dfn}
Let $\X$ be a subcategory of $\mod R$ and $n$ a nonnegative integer.
\begin{enumerate}[(1)]
\item
We inductively define the subcategory $\ext^n\X$ (or $\ext_R^n\X$ when there is some fear of confusion) of $\mod R$ as follows.
\begin{enumerate}[(i)]
\item
Set $\ext^0\X=\add\X$.
\item
For $n\ge 1$, let $\ext^n\X$ be the additive closure of the subcategory of $\mod R$ consisting of all $R$-modules $Y$ having an exact sequence of the form
$$
0 \to A \to Y \to B \to 0,
$$
where $A,B\in\ext^{n-1}\X$.
\end{enumerate}
If $\X$ consists of a single module $X$, then we simply write $\ext^nX$ (or $\ext_R^nX$) instead of $\ext^n\X$.
\item
We inductively define a subcategory $\res^n\X$ (or $\res_R^n\X$) of $\mod R$ as follows.
\begin{enumerate}[(i)]
\item
Let $\res^0\X$ be the additive closure of the subcategory of $\mod R$ consisting of all modules in $\X$ and $R$.
\item
For $n\ge 1$, let $\res^n\X$ be the additive closure of the subcategory of $\mod R$ consisting of all $R$-modules $Y$ having an exact sequence of either of the following two forms:
\begin{align*}
& 0 \to A \to Y \to B \to 0,\\
& 0 \to Y \to A \to B \to 0,
\end{align*}
where $A,B\in\res^{n-1}\X$.
\end{enumerate}
If $\X$ consists of a single module $X$, then we simply write $\res^nX$ (or $\res_R^nX)$ instead of $\res^n\X$.
\end{enumerate}
\end{dfn}

\begin{rem}
Let $\X,\Y$ be subcategories of $\mod R$, and let $n$ be a nonnegative integer.
\begin{enumerate}[(1)]
\item
\begin{enumerate}[(i)]
\item
There is an ascending chain $\ext^0\X\subseteq\ext^1\X\subseteq\cdots\subseteq\ext^n\X\subseteq\cdots\subseteq\ext\X$ of subcategories of $\mod R$.
\item
The equality $\ext\X=\bigcup_{n\ge 0}\ext^n\X$ holds.
\end{enumerate}
\item
\begin{enumerate}[(i)]
\item
There is an ascending chain $\res^0\X\subseteq\res^1\X\subseteq\cdots\subseteq\res^n\X\subseteq\cdots\subseteq\res\X$ of subcategories of $\mod R$.
\item
The equality $\res\X=\bigcup_{n\ge 0}\res^n\X$ holds.
\end{enumerate}
\end{enumerate}
\end{rem}

The associated primes of modules in the additive and extension closures of a module are restricted.

\begin{prop}\label{ass}
Let $X$ and $M$ be $R$-modules.
\begin{enumerate}[\rm (1)]
\item
If $M$ is in $\ext X$, then one has $\Ass M\subseteq\Ass X$.
\item
If $M$ is in $\res X$, then one has $\Ass M\subseteq\Ass X\cup\Ass R$.
\end{enumerate}
\end{prop}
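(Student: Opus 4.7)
The plan is to prove both statements by induction on the filtration level $n$ with $M\in\ext^n X$ (respectively, $M\in\res^n X$), using the remark that $\ext X=\bigcup_{n\ge 0}\ext^n X$ and $\res X=\bigcup_{n\ge 0}\res^n X$. The key elementary facts from commutative algebra I will invoke throughout are: (i) $\Ass(A\oplus B)=\Ass A\cup\Ass B$, (ii) if $N$ is a direct summand of $M$ then $\Ass N\subseteq\Ass M$, (iii) if $N\hookrightarrow M$ is a submodule then $\Ass N\subseteq\Ass M$, and (iv) for any short exact sequence $0\to A\to Y\to B\to 0$ one has $\Ass Y\subseteq\Ass A\cup\Ass B$.

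For part (1), the base case $M\in\ext^0 X=\add X$ says $M$ is a direct summand of some finite direct sum $X_1\oplus\cdots\oplus X_r$ with each $X_i\in X$; combining (i) and (ii) yields $\Ass M\subseteq\bigcup_i\Ass X_i\subseteq\Ass X$. For the inductive step, suppose the claim holds for $\ext^{n-1}X$ and let $M\in\ext^n X$. Then $M$ is a direct summand of a module $Y$ fitting in an exact sequence $0\to A\to Y\to B\to 0$ with $A,B\in\ext^{n-1}X$. By (iv) and the induction hypothesis,
$$
\Ass M\subseteq\Ass Y\subseteq\Ass A\cup\Ass B\subseteq\Ass X,
$$
as required.

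Part (2) runs in parallel, with two small adjustments. In the base case, $\res^0 X$ is the additive closure of $X\cup\{R\}$, so any $M\in\res^0X$ is a summand of a direct sum of copies of modules in $X$ and of $R$; hence $\Ass M\subseteq\Ass X\cup\Ass R$. In the inductive step, $M\in\res^n X$ is a summand of some $Y$ appearing in one of the two exact sequences
\begin{align*}
0 &\to A\to Y\to B\to 0,\\
0 &\to Y\to A\to B\to 0,
\end{align*}
with $A,B\in\res^{n-1}X$. In the first sequence fact (iv) gives $\Ass Y\subseteq\Ass A\cup\Ass B$, and in the second sequence fact (iii) gives $\Ass Y\subseteq\Ass A$; in either case the induction hypothesis applied to $A$ and $B$ yields $\Ass M\subseteq\Ass Y\subseteq\Ass X\cup\Ass R$.

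There is no real obstacle here: the argument is a bookkeeping induction, and the only point that needs any care is remembering to include $\Ass R$ in part (2) to account for the extra copies of $R$ allowed in $\res^0X$, and to handle the asymmetric second form of the defining short exact sequence by using the submodule inclusion rather than the general extension estimate.
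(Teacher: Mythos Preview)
Your proof is correct and follows exactly the approach the paper indicates: the paper's own proof is a one-line sketch (``One easily deduces the conclusion by induction on $n$''), and you have simply filled in the details of that induction. The only cosmetic slip is writing ``$X_i\in X$'' when $X$ is a single module (you mean copies of $X$), and strictly speaking $M$ is a summand of a finite direct sum of such $Y$'s rather than a single $Y$, but since each $\ext^{n-1}X$ and $\res^{n-1}X$ is additively closed this is harmless.
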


\begin{proof}
If $M$ is in $\ext X$ (respectively, $\res X$), then $M$ is in $\ext^nX$ (respectively, $\res^nX$) for some $n\ge 0$.
One easily deduces the conclusion by induction on $n$.
\end{proof}

For a subcategory $\X$ of $\mod R$ and a prime ideal $\p$ of $R$, we denote by $\X_\p$ the subcategory of $\mod R_\p$ consisting of $X_\p$ where $X$ runs through all modules in $\X$.
The proposition below gives the relationship between each closure and localization.

\begin{prop}\label{clloc}
Let $\X$ be a subcategory of $\mod R$, and let $\p$ be a prime ideal of $R$.
Then the following hold.
\begin{enumerate}[\rm (1)]
\item
$(\add_R\X)_\p$ is contained in $\add_{R_\p}\X_\p$.
\item
$(\ext_R\X)_\p$ is contained in $\ext_{R_\p}\X_\p$.
\item
$(\res_R\X)_\p$ is contained in $\res_{R_\p}\X_\p$.
\end{enumerate}
\end{prop}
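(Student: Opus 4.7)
The plan is to handle the three parts by reducing everything to the single fact that localization at $\p$ is an exact functor which commutes with finite direct sums and direct summands, and to use the inductive descriptions of $\ext\X$ and $\res\X$ recorded in the preceding remark.

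For part (1), suppose $M\in\add_R\X$. By the observation just before the definition of $\ext^n\X$, $M$ is a direct summand of a finite direct sum $X_1\oplus\cdots\oplus X_n$ with each $X_i\in\X$. Localizing at $\p$, $M_\p$ is a direct summand of $(X_1)_\p\oplus\cdots\oplus(X_n)_\p$, so $M_\p\in\add_{R_\p}\X_\p$.

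For part (2), I would show by induction on $n\ge 0$ that $(\ext_R^n\X)_\p\subseteq\ext_{R_\p}^n\X_\p$; the conclusion then follows from $\ext_R\X=\bigcup_{n\ge 0}\ext_R^n\X$. The case $n=0$ is exactly part (1). For the inductive step, let $M\in\ext_R^n\X$; then $M$ is a direct summand of some $Y$ admitting a short exact sequence $0\to A\to Y\to B\to 0$ with $A,B\in\ext_R^{n-1}\X$. Localization is exact, so $0\to A_\p\to Y_\p\to B_\p\to 0$ is exact, and the induction hypothesis places $A_\p,B_\p$ in $\ext_{R_\p}^{n-1}\X_\p$. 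Hence $Y_\p\in\ext_{R_\p}^n\X_\p$, and $M_\p$, being a direct summand of $Y_\p$, lies in the same subcategory since it is additively closed by definition. Part (3) is handled in exactly the same style: prove by induction that $(\res_R^n\X)_\p\subseteq\res_{R_\p}^n\X_\p$. The base case $n=0$ uses part (1) together with the fact that $R_\p\in\res_{R_\p}^0\X_\p$, so the localization of a summand of a direct sum of modules from $\X\cup\{R\}$ is a summand of a direct sum of modules from $\X_\p\cup\{R_\p\}$. The inductive step handles both types of defining exact sequences $0\to A\to Y\to B\to 0$ and $0\to Y\to A\to B\to 0$ simultaneously, again because localization preserves exactness.

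There is no real obstacle here; the argument is a bookkeeping exercise once one writes down the inductive definitions. The only point worth being careful about is not to confuse the additive closure used inside each step of the inductive construction with a deeper closure: at level $n$ of $\ext_R^n\X$ (or $\res_R^n\X$) we are allowed to pass to direct summands of direct sums of the constructed modules, and the corresponding operation is available in $\ext_{R_\p}^n\X_\p$ (or $\res_{R_\p}^n\X_\p$) for free, which is precisely what makes the induction go through.
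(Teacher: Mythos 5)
Your proof is correct and is essentially the argument the paper invokes by reference: the paper dismisses part (1) as obvious and delegates part (3) to \cite[Proposition 3.5]{res} (with part (2) ``analogous''), and the cited proposition is proved by exactly the induction on the filtration $\res^n\X$ that you carry out, using exactness of localization and closure under direct summands at each stage. One small wording point: in the inductive step for (2) you say $M$ is a direct summand of ``some $Y$'' admitting a short exact sequence $0\to A\to Y\to B\to 0$ with $A,B\in\ext^{n-1}_R\X$; strictly speaking $M$ is a direct summand of a finite direct sum $Y_1\oplus\cdots\oplus Y_m$ of such modules, but since $\ext^{n-1}_R\X$ is additively closed you may take $Y=\bigoplus Y_i$ with $A=\bigoplus A_i$ and $B=\bigoplus B_i$, so your statement is justified --- it would just be worth saying so explicitly.
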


\begin{proof}
The first statement is obvious.
The third statement is proved in \cite[Proposition 3.5]{res}, and an analogous argument shows the second statement.
\end{proof}

There are lower bounds for the depths of modules in the extension and resolving closures of a module.

\begin{prop}\label{cldep}
Let $X$ and $M$ be $R$-modules.
\begin{enumerate}[\rm (1)]
\item
If $M$ is in $\ext X$, then $\depth M\ge\depth X$.
\item
If $M$ is in $\res X$, then $\depth M\ge\inf\{\depth X,\depth R\}$.
\end{enumerate}
\end{prop}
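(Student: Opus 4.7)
The plan is to run a straightforward induction using the filtrations $\ext X=\bigcup_{n\ge 0}\ext^n X$ and $\res X=\bigcup_{n\ge 0}\res^n X$, with the classical depth lemma doing all of the real work. Recall that for a short exact sequence $0\to A\to Y\to B\to 0$ of $R$-modules one has
$$
\depth Y\ge\min\{\depth A,\depth B\},\qquad \depth A\ge\min\{\depth Y,\depth B+1\},
$$
and that for any direct sum decomposition $Y=M\oplus M'$ one has $\depth M\ge\depth Y=\min\{\depth M,\depth M'\}$. These two facts, together with the inductive definitions of $\ext^n X$ and $\res^n X$, will yield both statements.

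For (1), I would argue by induction on the smallest $n$ with $M\in\ext^n X$. When $n=0$ the module $M$ lies in $\add X$, so $M$ is a direct summand of a direct sum of copies of $X$, whence $\depth M\ge\depth X$. For the inductive step, $M$ is a summand of some $Y$ fitting into $0\to A\to Y\to B\to 0$ with $A,B\in\ext^{n-1}X$; the inductive hypothesis gives $\depth A,\depth B\ge\depth X$, and the depth lemma gives $\depth M\ge\depth Y\ge\min\{\depth A,\depth B\}\ge\depth X$.

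For (2), I would do the same induction, now on the smallest $n$ with $M\in\res^n X$, with the bound $e:=\inf\{\depth X,\depth R\}$. The base case $n=0$ is immediate since $\res^0 X=\add(\X\cup\{R\})$. The inductive step splits into the two shapes of short exact sequence allowed in the definition of $\res^n X$. In the first shape $0\to A\to Y\to B\to 0$ with $A,B\in\res^{n-1}X$, the inductive hypothesis and the first inequality of the depth lemma give $\depth Y\ge\min\{\depth A,\depth B\}\ge e$. In the second shape $0\to Y\to A\to B\to 0$ with $A,B\in\res^{n-1}X$, the second inequality of the depth lemma gives $\depth Y\ge\min\{\depth A,\depth B+1\}\ge e$ since $\depth B\ge e$. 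Either way $\depth M\ge\depth Y\ge e$.

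There is no real obstacle here; the only point that requires any care is remembering, in the $0\to Y\to A\to B\to 0$ case, that one needs the form of the depth lemma which bounds the depth of the leftmost term, rather than the (easier) bound on the middle term used in every other case. Once that is noted, both parts are completely mechanical.
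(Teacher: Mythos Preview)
Your proof is correct and follows exactly the approach the paper indicates: induction on $n$ in the filtrations $\ext^n X$ and $\res^n X$, using the depth lemma at each step. The paper itself only remarks that the proposition ``is proved by induction similarly to Proposition \ref{ass},'' and your write-up is precisely that argument spelled out.
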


This is proved by induction similarly to Proposition \ref{ass}.

Next we recall the definitions of the nonfree loci of an $R$-module and a subcategory of $\mod R$.

\begin{dfn}\label{defnf}
\begin{enumerate}[(1)]
\item
We denote by $\V(X)$ (or $\V_R(X)$ when there is some fear of confusion) the {\em nonfree locus} of an $R$-module $X$, namely, the set of prime ideals $\p$ of $R$ such that $X_\p$ is nonfree as an $R_\p$-module.
\item
We denote by $\V(\X)$ (or $\V_R(\X)$) the {\em nonfree locus} of a subcategory $\X$ of $\mod R$, namely, the union of $\V(X)$ where $X$ runs through all nonisomorphic $R$-modules in $\X$.
\end{enumerate}
\end{dfn}

We denote by $\Sing R$ the {\em singular locus} of $R$, namely, the set of prime ideals $\p$ of $R$ such that $R_\p$ is not a regular local ring.
We denote by $\s(R)$ the set of prime ideals $\p$ of $R$ such that the local ring $R_\p$ is not a field.
Clearly, $\s(R)$ contains $\Sing R$.

For each ideal $I$ of $R$, we denote by $\v(I)$ the set of prime ideals of $R$ containing $I$.
Recall that a subset $Z$ of $\Spec R$ is called {\em specialization-closed} provided that if $\p\in Z$ and $\q\in\v(\p)$ then $\q\in Z$.
Note that every closed subset of $\Spec R$ is specialization-closed.

The proposition below gives some basic properties of nonfree loci.
The proofs of the assertions are stated in \cite[Example 2.9]{res}, \cite[Proposition 2.10 and Corollary 2.11]{res} and \cite[Corollary 3.6]{res}, respectively.
\begin{prop}\label{nfsupp}
\begin{enumerate}[\rm (1)]
\item
Let $R$ be a Cohen-Macaulay local ring.
Then the nonfree locus $\V(\CM(R))$ coincides with the singular locus $\Sing R$.
\item
One has $\V(X)=\Supp\Ext^1(X,\Omega X)$ for every $R$-module $X$.
In particular, the nonfree locus of an $R$-module is closed in $\Spec R$ in the Zariski topology.
The nonfree locus of a subcategory of $\mod R$ is not necessarily closed but at least specialization-closed in $\Spec R$, and is contained in $\s(R)$.
\item
One has $\V(\X)=\V(\add\X)=\V(\ext\X)=\V(\res\X)$ for every subcategory $\X$ of $\mod R$.
\end{enumerate}
\end{prop}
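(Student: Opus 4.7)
The plan is to treat the three assertions in turn.

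For (1), the inclusion $\V(\CM(R))\subseteq\Sing R$ follows from the Auslander--Buchsbaum formula: if $R_\p$ is regular and $M\in\CM(R)$, then $M_\p$ is either zero or Cohen-Macaulay of depth $\dim R_\p$, and since $R_\p$ is regular $M_\p$ has finite projective dimension, forcing it to be zero, so $M_\p$ is free. For the reverse inclusion, given $\p\in\Sing R$, the witness I would use is $M:=\Omega^d(R/\p)$. It is Cohen-Macaulay by iterating the depth lemma on the short exact sequences $0\to\Omega^{i+1}(R/\p)\to F_i\to\Omega^i(R/\p)\to 0$, and it is nonzero because the residue field of $R_\p$ has infinite projective dimension over the singular ring $R_\p$. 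Localizing the minimal $R$-resolution of $R/\p$ at $\p$ gives $M_\p\cong\Omega_{R_\p}^d(R_\p/\p R_\p)\oplus(\text{free})$, and the first summand is nonfree, so $M_\p$ is nonfree and $\p\in\V(M)\subseteq\V(\CM(R))$.

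For (2), the content is the identity $\V(X)=\Supp\Ext^1(X,\Omega X)$. The underlying principle is that a minimal free cover $0\to\Omega Y\to F\to Y\to 0$ of a module $Y$ over a local ring splits if and only if $Y$ is free, equivalently if and only if $\Ext^1(Y,\Omega Y)=0$. Localizing the minimal $R$-resolution of $X$ at $\p$ gives $(\Omega X)_\p\cong\Omega_{R_\p}(X_\p)\oplus G$ with $G$ free, so $\Ext_{R_\p}^1(X_\p,\Omega_{R_\p}(X_\p))$ sits as a direct summand of $\Ext_R^1(X,\Omega X)_\p$. Applying the above principle over $R_\p$ to $Y=X_\p$ then yields: $X_\p$ is free if and only if $\Ext_R^1(X,\Omega X)_\p=0$, which gives the identity. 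Since the support of a finitely generated module is Zariski closed, $\V(X)$ is closed; the nonfree locus of a subcategory, being a union of closed sets, is specialization-closed. The containment $\V(\X)\subseteq\s(R)$ follows because every finitely generated module over a field is free.

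For (3), the chain $\V(\X)\subseteq\V(\add\X)\subseteq\V(\ext\X)\subseteq\V(\res\X)$ is trivial, so the content is the reverse containment. Using the inductive description of $\res^n\X$ and arguing by induction on $n$, it suffices to check that each closure operation used in the construction preserves containment in $\V(\X)$. The equalities $\V(M\oplus N)=\V(M)\cup\V(N)$, the inclusion $\V(N)\subseteq\V(M)$ when $N$ is a direct summand of $M$, and $\V(R)=\emptyset$ are immediate. For an extension $0\to A\to Y\to B\to 0$, at any prime $\p$ outside $\V(A)\cup\V(B)$ the localized sequence is a short exact sequence of free $R_\p$-modules, which necessarily splits, forcing $Y_\p$ to be free. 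The same splitting argument handles the other type of sequence $0\to Y\to A\to B\to 0$ appearing in the definition of $\res^n\X$: since $B_\p$ is free the sequence splits over $R_\p$, and $Y_\p$ is a summand of the free module $A_\p$, hence free.

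The common technical point, which is the main obstacle underlying all three parts, is the behavior of syzygies under localization: the minimal free resolution over $R$ need not remain minimal after localization at $\p$, so $(\Omega M)_\p$ agrees with $\Omega_{R_\p}(M_\p)$ only up to a free direct summand. Once this discrepancy is correctly tracked and one remembers that adjoining free summands never turns a nonfree module free, each of the remaining verifications is essentially formal.
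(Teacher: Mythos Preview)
Your proof is correct in all three parts. The paper itself does not give a proof of this proposition but simply cites \cite[Example 2.9, Proposition 2.10, Corollary 2.11, Corollary 3.6]{res}; your arguments are the standard ones and match what one finds in those references. The only stylistic remark is that in part (1) the phrase ``forcing it to be zero'' is momentarily ambiguous (it means the projective dimension, not $M_\p$), but the mathematics is sound.
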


For a subset $\Phi$ of $\Spec R$, we denote by $\V^{-1}(\Phi)$ the subcategory of $\mod R$ consisting of all $R$-modules $M$ such that $\V(M)$ is contained in $\Phi$.
Here we make a list of several statements concerning nonfree loci.

\begin{prop}\label{variety}
\begin{enumerate}[\rm (1)]
\item
Let $N$ be a direct summand of an $R$-module $M$.
Then one has $\V(N)\subseteq\V(M)$.
\item
Let $0 \to L \to M \to N \to 0$ be an exact sequence of $R$-modules.
Then one has $\V(L)\subseteq\V(M)\cup\V(N)$ and $\V(M)\subseteq\V(L)\cup\V(N)$.
\item
For a subset $\Phi$ of $\Spec R$, the subcategory $\V^{-1}(\Phi)$ of $\mod R$ is resolving.
\item
For an ideal $I$ of $R$, one has $\V_R(R/I)=\v(I+(0:I))$.
\item
One has $\V_R(R/\p)=\v(\p)$ for every $\p\in\s(R)$.
\item
Let $\Phi$ be a specialization-closed subset of $\Spec R$ contained in $\s(R)$.
Then one has $R/\p\in\V^{-1}(\Phi)$ for every $\p\in\Phi$.
\end{enumerate}
\end{prop}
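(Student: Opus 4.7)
The plan is to verify the six parts essentially by localization, using the fact that over a local ring a finitely generated projective module is free. Parts (1) and (2) are the key localized computations, part (3) assembles them, and parts (4)--(6) are concrete calculations of nonfree loci of cyclic modules.

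For (1), localize at a prime $\p$ to get that $N_\p$ is a direct summand of $M_\p$; if $M_\p$ is $R_\p$-free, then so is $N_\p$ (a direct summand of a free module over a local ring is free). For (2), I would localize the exact sequence at $\p$. If $N_\p$ is $R_\p$-free, then it is projective, so the localized sequence $0\to L_\p\to M_\p\to N_\p\to 0$ splits and $M_\p\cong L_\p\oplus N_\p$. From this splitting, both inclusions follow: if in addition $M_\p$ is free then $L_\p$ is free by (1), giving $\V(L)\subseteq\V(M)\cup\V(N)$; and if in addition $L_\p$ is free then $M_\p\cong L_\p\oplus N_\p$ is free, giving $\V(M)\subseteq\V(L)\cup\V(N)$.

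Part (3) is then an assembly: $\V(R)=\emptyset\subseteq\Phi$ so $R\in\V^{-1}(\Phi)$; closure under direct summands follows from (1); closure under extensions from the second inclusion in (2); and closure under syzygies from applying the first inclusion in (2) to the standard sequence $0\to\Omega M\to F\to M\to 0$ with $F$ free, since $\V(F)=\emptyset$. By Proposition \ref{resdef} this makes $\V^{-1}(\Phi)$ resolving.

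The technical heart is (4). Since freeness is local, $\p\in\V(R/I)$ iff $R_\p/IR_\p$ is not $R_\p$-free. A nonzero cyclic module $R_\p/J$ over a local ring is free iff $J=0$, so $R_\p/IR_\p$ is free iff $IR_\p=R_\p$ (equivalently $I\not\subseteq\p$) or $IR_\p=0$ (equivalently $(0:I)\not\subseteq\p$, since $IR_\p=0$ means some $s\in R\setminus\p$ kills $I$). Hence $\p\in\V(R/I)$ iff $I\subseteq\p$ and $(0:I)\subseteq\p$, iff $\p\in\v(I+(0:I))$. For (5), I would show $(0:\p)\subseteq\p$ when $\p\in\s(R)$: otherwise some $x\notin\p$ satisfies $x\p=0$, which upon localizing gives $\p R_\p=0$, making $R_\p$ a field, contradicting $\p\in\s(R)$. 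Thus $\p+(0:\p)=\p$ and (4) yields $\V(R/\p)=\v(\p)$. Finally (6) is immediate: for $\p\in\Phi\subseteq\s(R)$, specialization-closedness of $\Phi$ gives $\v(\p)\subseteq\Phi$, and by (5) this says $\V(R/\p)\subseteq\Phi$, i.e., $R/\p\in\V^{-1}(\Phi)$. The only step where one has to think carefully is the cyclic-module computation in (4); everything else is bookkeeping.
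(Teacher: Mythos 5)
Your proof is correct and follows the same route as the paper, which simply declares (1) and (2) "straightforward," derives (3) from them plus Proposition \ref{resdef}, and handles (4)--(6) by the same localization computation for cyclic modules (with the same observation that $\p\in\s(R)$ forces $(0:\p)\subseteq\p$). You have just filled in the details the paper leaves to the reader.
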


\begin{proof}
(1),(2) These are straightforward.

(3) As $\V(R)=\emptyset\subseteq\Phi$, we have $R\in\V^{-1}(\Phi)$.
The assertion follows from (1) and (2).

(4) Fix a prime ideal $\p$ of $R$.
The $R_\p$-module $R_\p/IR_\p$ is nonfree if and only if $IR_\p\ne R_\p$ and $IR_\p\ne 0$, if and only if $I\subseteq\p$ and $(0:I)\subseteq\p$.
Hence $\V_R(R/I)=\v(I+(0:I))$ holds.

(5) Since the local ring $R_\p$ is not a field, its maximal ideal $\p R_\p$ is nonzero.
Hence we see that $(0:\p)$ is contained in $\p$.
Then apply (4).

(6) This is a direct consequence of (5).
\end{proof}

We introduce the notion of a thick subcategory of the category of Cohen-Macaulay modules over a Cohen-Macaulay local ring.

\begin{dfn}
Let $R$ be a Cohen-Macaulay local ring.
Let $\X$ be a subcategory of $\CM(R)$ which is closed under direct summands.
We say that $\X$ is {\em thick} provided that for each short exact sequence $0 \to L \to M \to N \to 0$ of Cohen-Macaulay $R$-modules (we will often call such an exact sequence an {\em exact sequence in $\CM(R)$}), if two of $L,M,N$ belong to $\X$, then so does the third.
\end{dfn}

In the sense of Krause and Br\"{u}ning \cite{K,Br}, a thick subcategory of the category of all (not necessarily finitely generated) modules means a full subcategory which is closed under taking submodules, quotient modules, extensions and arbitrary direct sums.
Of course, this is a different notion from ours.
On the other hand, a thick subcategory of the category of modules (over a group algebra) in the sense of Benson, Iyengar and Krause \cite{BIK2} is similar to ours.

In our sense, a thick subcategory has two different meanings.
One of them is a subcategory of a triangulated category, and thick subcategories of the stable category of the category of Cohen-Macaulay modules over a Gorenstein local ring are main objects of this paper.
The other is what we defined just above, and thick subcategories of the category of Cohen-Macaulay modules over a Cohen-Macaulay local ring are also our main objects.
In fact, our two kinds of thick subcategory are essentially the same notion; we will observe it in Section \ref{tsoscm}.

We make below a list of several examples of a thick subcategories of Cohen-Macaulay modules.

\begin{ex}\label{thex}
Let $n$ be a nonnegative integer, and let $K$ be an $R$-module.
The following modules form thick subcategories of $\CM(R)$.
\begin{enumerate}[(1)]
\item
The Cohen-Macaulay $R$-modules.
\item
The free $R$-modules.
\item
The totally $C$-reflexive $R$-modules, where $C$ is a fixed semidualizing $R$-module.
\item
The Cohen-Macaulay $R$-modules $M$ with $\Tor_i^R(M,K)=0$ for $i\gg 0$.
\item
The Cohen-Macaulay $R$-modules $M$ with $\Ext_R^i(M,K)=0$ for $i\gg 0$.
\item
The Cohen-Macaulay $R$-modules $M$ with $\Ext_R^i(K,M)=0$ for $i\gg 0$.
\item
The bounded Cohen-Macaulay $R$-modules.
\item
The Cohen-Macaulay $R$-modules having finite complexity.
\item
The Cohen-Macaulay $R$-modules of lower complete intersection dimension zero.
\end{enumerate}
Each of the thick properties of these subcategories can be verified by using Example \ref{resex}.
\end{ex}

Now we recall the definition of the stable category of Cohen-Macaulay modules over a Cohen-Macaulay local ring.

\begin{dfn}
\begin{enumerate}[(1)]
\item
Let $M,N$ be $R$-modules.
We denote by $\F_R(M,N)$ the set of $R$-homomorphisms $M\to N$ factoring through free $R$-modules.
It is easy to observe that $\F_R(M,N)$ is an $R$-submodule of $\Hom_R(M,N)$.
We set $\lhom_R(M,N)=\Hom_R(M,N)/\F_R(M,N)$.
\item
Let $R$ be a Cohen-Macaulay local ring.
The {\em stable category} of $\CM(R)$, which is denoted by $\lCM(R)$, is defined as follows.
\begin{enumerate}[(i)]
\item
$\Ob(\lCM(R))=\Ob(\CM(R))$.
\item
$\Hom_{\lCM(R)}(M,N)=\lhom_R(M,N)$ for $M,N\in\Ob(\lCM(R))$.
\end{enumerate}
\end{enumerate}
\end{dfn}

\begin{rem}
Let $R$ be a Cohen-Macaulay local ring.
Then $\lCM(R)$ is always an additive category.
The direct sum of objects $M$ and $N$ in $\lCM(R)$ is the direct sum $M\oplus N$ of $M$ and $N$ as $R$-modules.

From now on, we consider the case where $R$ is Gorenstein.
Then $\CM(R)$ is a Frobenius category, and $\lCM(R)$ is a triangulated category.
We recall in the following how to define an exact triangle in $\lCM(R)$.
For the details, we refer to \cite[Section 2 in Chapter I]{H} or \cite[Theorem 4.4.1]{Bu}.

Let $M$ be an object of $\lCM(R)$.
Then, since $M$ is a Cohen-Macaulay $R$-module, there exists an exact sequence $0 \to M \to F \to N \to 0$ of Cohen-Macaulay $R$-modules with $F$ free.
Defining $\Sigma M=N$, we have an automorphism $\Sigma:\lCM(R)\to\lCM(R)$ of categories.
This is the suspension functor.

Let
$$
\begin{CD}
0 @>>> L @>>> F @>>> \Sigma L @>>> 0 \\
@. @V{f}VV @VVV @| \\
0 @>>> M @>{g}>> N @>{h}>> \Sigma L @>>> 0
\end{CD}
$$
be a commutative diagram of Cohen-Macaulay $R$-modules with exact rows such that $F$ is free.
Then a sequence
$$
L' \overset{f'}{\to} M' \overset{g'}{\to} N' \overset{h'}{\to} \Sigma L'
$$
of morphisms in $\lCM(R)$ such that there exists a commutative diagram
$$
\begin{CD}
L @>{f}>> M @>{g}>> N @>{h}>> \Sigma L \\
@V{\alpha}VV @V{\beta}VV @V{\gamma}VV @V{\Sigma\alpha}VV \\
L' @>{f'}>> M' @>{g'}>> N' @>{h'}>> \Sigma L'
\end{CD}
$$
in $\lCM(R)$ such that $\alpha,\beta,\gamma$ are isomorphisms is defined to be an exact triangle in $\lCM(R)$.
\end{rem}

\section{Extension-closed subcategories of modules}\label{esom}

In this section, we study the extension closures of syzygies of the residue field over a Cohen-Macaulay local ring.
We begin with the following lemma, which follows from the proof of \cite[Lemma 2.2]{HP}.

\begin{lem}\label{syznzd}
Let $M$ be an $R$-module, and let $x$ be an $M$-regular element in $\Ann\Ext_R^1(M,\Omega M)$.
Then there is an isomorphism $\Omega_R(M/xM)\cong\Omega M\oplus M$ of $R$-modules.
\end{lem}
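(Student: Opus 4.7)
The plan is to build a free cover of $M/xM$ out of the chosen one of $M$, and to identify its kernel by exploiting that $x$ annihilates $\Ext_R^1(M,\Omega M)$.

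I would start with the short exact sequence $0\to\Omega M\to F\to M\to 0$ coming from a minimal free cover of $M$, and write $\xi\in\Ext_R^1(M,\Omega M)$ for its class. Then I would form the pullback $P$ along the multiplication map $x\colon M\to M$, producing a commutative diagram with exact rows
$$
\begin{CD}
0 @>>> \Omega M @>>> P @>>> M @>>> 0\\
@. @| @VVV @V{x}VV\\
0 @>>> \Omega M @>>> F @>>> M @>>> 0
\end{CD}
$$
in which the top row represents $x\cdot\xi$. Since $x\in\Ann\Ext_R^1(M,\Omega M)$, this class vanishes, so the top row splits and $P\cong\Omega M\oplus M$.

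Next, I would apply the snake lemma to the above diagram. The leftmost vertical map is the identity, while the rightmost is multiplication by $x$ on $M$, whose kernel is zero (as $x$ is $M$-regular) and whose cokernel is $M/xM$. The snake sequence therefore degenerates to a short exact sequence
$$
0\to P\to F\to M/xM\to 0.
$$

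To finish, I would verify that $F\twoheadrightarrow M/xM$ is a minimal free cover; granting this, $\Omega_R(M/xM)$ equals $P\cong\Omega M\oplus M$. In the relevant case $x\in\m$, reducing modulo $\m$ turns the surjection into the isomorphism $F/\m F\cong M/\m M=(M/xM)/\m(M/xM)$, so minimality is immediate. (If $x\notin\m$ then $\Ann\Ext_R^1(M,\Omega M)$ is the unit ideal, forcing $M$ to be free with $\Omega M=0$, whence the claim is trivial.) The only subtle point in the argument is recognizing the pullback extension as $x\cdot\xi$; everything else is formal.
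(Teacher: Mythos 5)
Your argument is correct in substance and follows the same route as the proof the paper cites (Herzog--Popescu, Lemma 2.2): form the pullback of the canonical extension $0\to\Omega M\to F\to M\to 0$ along multiplication by $x$, observe that the pullback extension represents $x\cdot\xi$ and therefore splits because $x$ annihilates $\Ext^1_R(M,\Omega M)$, run the snake lemma using $M$-regularity of $x$ to obtain $0\to P\to F\to M/xM\to 0$ with $P\cong\Omega M\oplus M$, and finally check minimality of $F\twoheadrightarrow M/xM$.

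One small quibble with your parenthetical on the edge case $x\notin\m$: you conclude that $M$ is free with $\Omega M=0$ and declare the claim trivial, but in that situation $M/xM=0$, so $\Omega_R(M/xM)=0$ while $\Omega M\oplus M=M$, and the stated isomorphism would actually be \emph{false} unless $M=0$. The correct reading is that the lemma implicitly requires $x\in\m$ (Herzog--Popescu state this explicitly), which your main argument already assumes; you do not need, and should not attempt, the unit case. This does not affect the substance of your proof, but the claim that the unit case is ``trivial'' should be replaced by the observation that the hypothesis silently excludes it.
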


The following proposition is a generalization of \cite[Theorem 2.2]{OP}.
This proposition yields a sufficient condition for a regular sequence $\xx=x_1,\dots,x_n$ on an $R$-module $M$ to be such that $M$ is a direct summand of $\Omega^n(M/\xx M)$.
Thanks to this proposition, we will be able to prove the main result of this section.

\begin{prop}\label{omegan}
Let $M$ be an $R$-module.
Let $\xx=x_1,\dots,x_n$ be an $R$- and $M$-sequence in $\bigcap_{1\le i,j\le n}\Ann\Ext^i(M,\Omega^jM)$.
Then one has an $R$-isomorphism
$$
\Omega_R^n(M/\xx M)\cong\bigoplus_{i=0}^n(\Omega^iM)^{\oplus\binom{n}{i}}.
$$
\end{prop}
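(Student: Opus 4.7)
The plan is to prove the formula by induction on $n$, with Lemma \ref{syznzd} as the engine. The base case $n=1$ is immediate: Lemma \ref{syznzd} applied to $M$ with $x = x_1$ gives $\Omega(M/x_1 M) \cong \Omega M \oplus M$, which is exactly the claim $\bigoplus_{i=0}^1 (\Omega^i M)^{\oplus\binom{1}{i}}$.

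For the inductive step, write $\xx' = x_1, \ldots, x_{n-1}$ and set $N = M/\xx' M$, so that $M/\xx M = N/x_n N$. Since $\xx$ is an $M$-sequence, $x_n$ is regular on $N$. The aim is to apply Lemma \ref{syznzd} to $N$ with the element $x = x_n$, which yields $\Omega(M/\xx M) \cong \Omega N \oplus N$. Applying $\Omega^{n-1}$ to both sides (minimal syzygies distribute over direct sums) and invoking the inductive hypothesis on the pair $(M,\xx')$ of length $n-1$ produces
$$\Omega^n(M/\xx M) \cong \Omega^n N \oplus \Omega^{n-1} N \cong \bigoplus_{j=1}^n (\Omega^j M)^{\oplus\binom{n-1}{j-1}} \oplus \bigoplus_{i=0}^{n-1}(\Omega^i M)^{\oplus\binom{n-1}{i}},$$
and Pascal's identity $\binom{n-1}{i-1}+\binom{n-1}{i}=\binom{n}{i}$ collapses this to the desired $\bigoplus_{i=0}^n (\Omega^i M)^{\oplus\binom{n}{i}}$.

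The principal obstacle is to verify the hypothesis of Lemma \ref{syznzd} in this setting, namely $x_n \cdot \Ext^1(N,\Omega N) = 0$; equivalently, the multiplication map $x_n\cdot\mathrm{id}_N$ must factor through a free $R$-module. This is not a formal consequence of the analogous statement for $M$ (which is what $x_n \in \Ann\Ext^1(M,\Omega M)$ encodes), because a chain-level factorization $M \to F \to M$ need not descend to the quotient $N$. The remedy is to use the full strength of the hypothesis $\xx \subseteq \bigcap_{1 \le i,j \le n}\Ann\Ext^i(M,\Omega^j M)$: by running along the chain of short exact sequences $0 \to M^{(k-1)} \xrightarrow{x_k} M^{(k-1)} \to M^{(k)} \to 0$ with $M^{(k)} := M/(x_1,\ldots,x_k)M$ and feeding each into $\Hom_R(-,\Omega^l M)$, one argues by a secondary induction on $k$ that $x_n$ annihilates $\Ext^1(M^{(k)},\Omega M^{(k)})$ at every level $k \le n-1$. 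The generous range $1\le i,j\le n$ in the hypothesis is chosen precisely so that each step of this secondary induction has enough Ext-vanishing at its disposal.
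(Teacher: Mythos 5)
Your overall strategy---induction on $n$, Lemma \ref{syznzd} as the engine, Pascal's rule---is the same as the paper's, and the final bookkeeping with binomial coefficients is correct. However, there is a genuine gap at the one place you flag as ``the principal obstacle,'' and it is not repaired by the secondary induction you sketch.

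The deviation from the paper's proof is where Lemma \ref{syznzd} gets applied. You apply it directly to $N := M/\xx'M$ with the element $x_n$; the paper applies it to $\Omega_R^{n-1}(M/\xx'M)$, which by the inductive hypothesis is $\bigoplus_{i=0}^{n-1}(\Omega^iM)^{\oplus\binom{n-1}{i}}$. This difference is decisive. For the paper's choice, the required annihilator is
\[
\Ann\Ext^1_R\Bigl(\bigoplus_{i=0}^{n-1}\Omega^iM,\ \bigoplus_{j=0}^{n-1}\Omega^{j+1}M\Bigr)
=\bigcap_{1\le i,j\le n}\Ann\Ext^i_R(M,\Omega^jM),
\]
because $\Ext^1_R(\Omega^iM,-)\cong\Ext^{i+1}_R(M,-)$; the indices land exactly in the hypothesised range $1\le i,j\le n$, so $x_n$ lies in it by fiat. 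For your choice, on the other hand, $\Omega N=\Omega(M/\xx'M)$ is \emph{not} a direct sum of syzygies $\Omega^jM$ with $j\ge 1$: already at $n=2$, if the lemma applied at the first step then $\Omega(M/x_1M)\cong\Omega M\oplus M$, so $\Ext^1_R(N,\Omega N)$ contains a summand $\Ext^1_R(M/x_1M,\ M)$, i.e.\ a term with target $\Omega^0M$. The hypothesis gives no control over $\Ext^i_R(M,M)$, nor over $\Hom_R(M,M)$, and these enter inescapably.

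The secondary induction you propose does not close this gap, for a structural reason. When you apply $\Hom_R(-,Z)$ to the short exact sequence $0\to M^{(k-1)}\xrightarrow{x_k}M^{(k-1)}\to M^{(k)}\to 0$, the connecting maps are multiplication by $x_k$, which by hypothesis vanish in the relevant degrees, so the long exact sequence breaks up into short exact sequences
\[
0\to\Ext^{i-1}_R(M^{(k-1)},Z)\to\Ext^i_R(M^{(k)},Z)\to\Ext^i_R(M^{(k-1)},Z)\to 0.
\]
An extension of two $x_n$-torsion modules is in general only $x_n^2$-torsion, and Lemma \ref{syznzd} requires $x_n$ itself, not a power, in the annihilator. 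Moreover, in the degree $i=1$ case the left-hand term is $\Hom_R(M^{(k-1)},Z)/x_k\Hom_R(M^{(k-1)},Z)$, which is not $x_n$-torsion at all under the stated hypothesis (only $\Ext^i$ with $i\ge 1$ is controlled). So neither the degradation of annihilators under extensions nor the presence of $\Hom$ terms is addressed.

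The paper's remedy is to apply Lemma \ref{syznzd} to $N_{\mathrm{paper}}:=\Omega^{n-1}(M/\xx'M)$, for which the annihilator computation is clean as shown above. The price is that one must then separately identify $\Omega\bigl(N_{\mathrm{paper}}/x_nN_{\mathrm{paper}}\bigr)$ with $\Omega^n(M/\xx M)$: the paper does this by tensoring a truncated minimal resolution of $M/\xx'M$ with $R/(x_n)$ in a $3\times 3$ diagram and then comparing Betti numbers to kill the potential free summand. That extra identification step is absent from your plan because you took $\Omega^{n-1}$ only at the end, but its absence is not a simplification---it is the symptom of having applied the lemma at the wrong stage. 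To repair your proof, switch the target of Lemma \ref{syznzd} to $\Omega^{n-1}(M/\xx'M)$ and add the diagram-and-Betti-number argument.
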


\begin{proof}
We prove the proposition by induction on $n$.
The assertion is trivial when $n=0$.
Let $n\ge 1$, and set $\yy=x_1,\dots,x_{n-1}$.
The induction hypothesis implies that $N:=\Omega^{n-1}(M/\yy M)$ is isomorphic to $\bigoplus_{i=0}^{n-1}(\Omega^iM)^{\oplus\binom{n-1}{i}}$.
We have
$$
\Ann\Ext^1(N,\Omega N)=\Ann\Ext^1(\bigoplus_{i=0}^{n-1}\Omega^iM,\bigoplus_{j=0}^{n-1}\Omega^{j+1}M)=\bigcap_{1\le i,j\le n}\Ann\Ext^i(M,\Omega^jM),
$$
which contains the element $x_n$.
When $n=1$, we have $N=M$ and $x_n=x_1$, hence $x_n$ is $N$-regular.
When $n\ge 2$, we have that $N$ is a submodule of a free $R$-module, hence $x_n$ is $N$-regular since it is $R$-regular.
Therefore in any case $x_n$ is an $N$-regular element.
Using Lemma \ref{syznzd}, we obtain isomorphisms
\begin{align*}
\Omega(N/x_nN) & \cong \Omega N\oplus N \cong (\bigoplus_{i=0}^{n-1}(\Omega^{i+1}M)^{\oplus\binom{n-1}{i}})\oplus(\bigoplus_{i=0}^{n-1}(\Omega^iM)^{\oplus\binom{n-1}{i}}) \\
& \cong \Omega^nM\oplus(\bigoplus_{i=1}^{n-1}(\Omega^iM)^{\oplus(\binom{n-1}{i-1}+\binom{n-1}{i})})\oplus M \cong \bigoplus_{i=0}^n(\Omega^iM)^{\oplus\binom{n}{i}}.
\end{align*}
Thus it is enough to prove that $\Omega^n(M/\xx M)$ is isomorphic to $\Omega(N/x_nN)$.
There is a commutative diagram
$$
\begin{CD}
@. 0 @. 0 @. @. 0 @. 0 \\
@. @VVV @VVV @. @VVV @VVV \\
0 @>>> N @>>> F_{n-2} @. \to \cdots \to @. F_0 @>>> M/\yy M @>>> 0 \\
@. @V{x_n}VV @V{x_n}VV @. @V{x_n}VV @V{x_n}VV \\
0 @>>> N @>>> F_{n-2} @. \to \cdots \to @. F_0 @>>> M/\yy M @>>> 0 \\
@. @VVV @VVV @. @VVV @VVV \\
0 @>>> N/x_nN @>>> F_{n-2}/x_nF_{n-2} @. \to \cdots \to @. F_0/x_nF_0 @>>> M/\xx M @>>> 0 \\
@. @VVV @VVV @. @VVV @VVV \\
@. 0 @. 0 @. @. 0 @. 0
\end{CD}
$$
of $R$-modules with exact rows and columns, where each $F_i$ is free.
Applying $\Omega$ to the third row, we get an exact sequence
$$
0 \to \Omega(N/x_nN) \to F_{n-2}\oplus G_{n-2} \to \cdots \to F_0\oplus G_0 \to \Omega(M/\xx M) \to 0
$$
of $R$-modules, where each $G_i$ is free.
We obtain isomorphisms $\Omega(N/x_nN)\cong\Omega^{n-1}(\Omega(M/\xx M))\oplus H\cong\Omega^n(M/\xx M)\oplus H$ for some free $R$-module $H$.
Since there are exact sequences
\begin{align*}
& 0 \to \Tor_1^R(N,k) \to \Tor_1^R(N/x_nN,k) \to N\otimes_Rk \to 0\quad\text{and}\\
& 0 \to \Tor_n^R(M/\yy M,k) \to \Tor_n^R(M/\xx M,k) \to \Tor_{n-1}^R(M/\yy M,k) \to 0,
\end{align*}
we have the following equalities of Betti numbers:
$$
\beta_1^R(N/x_nN)=\beta_1^R(N)+\beta_0^R(N)=\beta_n^R(M/\yy M)+\beta_{n-1}^R(M/\yy M)=\beta_n^R(M/\xx M).
$$
It follows that $\Omega(N/x_nN)$ and $\Omega^n(M/\xx M)$ have the same minimal number of generators, which implies $H=0$.
Consequently, $\Omega(N/x_nN)$ is isomorphic to $\Omega^n(M/\xx M)$, which we have wanted to prove.
\end{proof}

The lemma below is easily shown by induction on the length of the module $M$.

\begin{lem}\label{length}
Let $M$ be an $R$-module of finite length.
Then $\Omega^nM$ belongs to $\ext(\Omega^nk)$ for every $n\ge 0$.
\end{lem}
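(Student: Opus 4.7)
The plan is to induct on the length of $M$. The base case where $M=0$ is immediate, since then $\Omega^n M = 0$ is a direct summand of $\Omega^n k$, hence lies in $\ext(\Omega^n k)$.

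For the inductive step, suppose $M\ne 0$ has finite length. Since $R$ is local with residue field $k$, the socle of $M$ is nonzero, so $M$ contains a submodule isomorphic to $k$, yielding a short exact sequence
$$
0 \to k \to M \to M' \to 0
$$
in which the length of $M'$ is strictly smaller than that of $M$. I would then apply the horseshoe lemma to this sequence, starting from the minimal free resolutions of $k$ and of $M'$, to produce a (not necessarily minimal) free resolution of $M$. The induced exact sequence of $n$-th syzygies takes the shape
$$
0 \to \Omega^n k \to \Omega^n M \oplus F \to \Omega^n M' \to 0
$$
for some free $R$-module $F$; the extra free summand appears because the horseshoe resolution of $M$ can differ from its minimal free resolution by a contractible complex of free modules.

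By the induction hypothesis $\Omega^n M'$ lies in $\ext(\Omega^n k)$, and $\Omega^n k$ obviously does as well. Since the extension closure is closed under extensions, the middle term $\Omega^n M \oplus F$ belongs to $\ext(\Omega^n k)$; and since it is closed under direct summands, one concludes $\Omega^n M \in \ext(\Omega^n k)$, completing the induction.

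The argument presents no serious obstacle; the only mild technicality is the extra free summand $F$ produced by the horseshoe lemma when combining minimal resolutions, and this is harmless because closure of $\ext(\Omega^n k)$ under direct summands lets us discard it at the end.
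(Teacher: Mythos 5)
Your proof is correct and follows exactly the route the paper has in mind; the paper merely remarks that the lemma is ``easily shown by induction on the length of the module $M$,'' and your argument supplies precisely the missing details (socle element giving the short exact sequence $0 \to k \to M \to M' \to 0$, the horseshoe lemma producing the syzygy sequence up to a free summand, and closure of $\ext(\Omega^n k)$ under extensions and direct summands).
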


Now, we can prove the following theorem concerning the structure of modules which are free on the punctured spectrum.
It is the main result of this section, which will play an essential role in the proofs of the other main results of this paper.

\begin{thm}\label{extdepth}
Let $R$ be a Cohen-Macaulay local ring.
Let $M$ be an $R$-module of depth $t$.
Suppose that $M$ is free on the punctured spectrum of $R$.
Then $M$ belongs to $\ext(\bigoplus_{i=t}^d\Omega^ik)$.
\end{thm}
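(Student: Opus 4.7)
The plan is to induct on the codepth $s:=d-t$ of $M$, using Proposition \ref{omegan} to reduce the claim to a statement about syzygies of a depth-zero module, then handling the $\m$-torsion part of that module separately. Since $M$ is free on the punctured spectrum, $\Ext_R^i(M,\Omega^jM)$ is supported only at $\m$ for all $i\ge 1$ and $j\ge 0$, hence of finite length; so the ideal $I:=\bigcap_{1\le i,j\le t}\Ann_R\Ext_R^i(M,\Omega^jM)$ is $\m$-primary (or equal to $R$ when $t=0$). Using that $R$ is Cohen-Macaulay of dimension $d\ge t$ and $M$ has depth $t$, prime avoidance inside $I$ produces a sequence $\xx=x_1,\dots,x_t\in I$ which is simultaneously $R$-regular and $M$-regular. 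By Proposition \ref{omegan}, $\Omega^t N\cong\bigoplus_{i=0}^t(\Omega^iM)^{\oplus\binom{t}{i}}$ where $N:=M/\xx M$, so $M$ is a direct summand of $\Omega^tN$; since the extension closure is closed under direct summands, it suffices to prove $\Omega^tN\in\ext(\bigoplus_{i=t}^d\Omega^ik)$.

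The base case $s=0$ (equivalently $t=d$) is immediate: here $\xx$ is an $R$-regular sequence of length $d$ in a Cohen-Macaulay ring of dimension $d$, so $(\xx)$ is $\m$-primary and $N$ has finite length, whence $\Omega^dN\in\ext(\Omega^dk)$ by Lemma \ref{length}. For the inductive step $t<d$, consider the short exact sequence $0\to L\to N\to N'\to 0$ where $L$ is the $\m$-torsion submodule of $N$ (necessarily of finite length) and $N':=N/L$ satisfies $N'=0$ or $\depth N'\ge 1$. The horseshoe lemma, combined with Schanuel to replace a non-minimal syzygy by the minimal one up to a free summand, yields an exact sequence $0\to\Omega^tL\to\Omega^tN\oplus F\to\Omega^tN'\to 0$ with $F$ free. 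Lemma \ref{length} places $\Omega^tL$ in $\ext(\Omega^tk)$.

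The main obstacle is to verify that $\Omega^tN'$ itself satisfies the hypotheses of the theorem with strictly smaller codepth, so that the induction hypothesis applies. The depth estimate $\depth\Omega^tN'\ge\min(d,\depth N'+t)=t+1$ is routine from $\depth N'\ge 1$ and $t<d$. For local freeness on the punctured spectrum: for $\p\ne\m$ one has $N'_\p=N_\p=M_\p/\xx M_\p$; if some $x_i\notin\p$ then $N_\p=0$, while if $\xx\subseteq\p$ then $\xx$ is an $R_\p$-regular sequence of length $t$, and since $M_\p$ is a free $R_\p$-module the Koszul complex furnishes a free $R_\p$-resolution of $N_\p$ of length exactly $t$, so $\Omega^t_{R_\p}N_\p$ is $R_\p$-free in either case. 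The induction hypothesis then gives $\Omega^tN'\in\ext(\bigoplus_{i=t+1}^d\Omega^ik)\subseteq\ext(\bigoplus_{i=t}^d\Omega^ik)$. Extension-closure applied to the horseshoe sequence places $\Omega^tN\oplus F$ in $\ext(\bigoplus_{i=t}^d\Omega^ik)$, and closure under direct summands places $\Omega^tN$, and hence $M$, in $\ext(\bigoplus_{i=t}^d\Omega^ik)$.
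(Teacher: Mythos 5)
Your proposal is correct and follows essentially the same route as the paper's proof: reduce via Proposition \ref{omegan} to showing $\Omega^t(M/\xx M)\in\ext(\bigoplus_{i=t}^d\Omega^ik)$, then induct on $d-t$ using the $\m$-torsion decomposition of $M/\xx M$, the horseshoe/Schanuel exact sequence on syzygies, Lemma \ref{length} for the finite-length piece, and the Koszul-localization argument to verify that the higher-depth syzygy is again free on the punctured spectrum. The only cosmetic deviations are that the paper treats the case where $M$ is free by a direct Koszul computation rather than folding it into Proposition \ref{omegan}, and your parenthetical ``or equal to $R$ when $t=0$'' is slightly imprecise ($I=R$ also occurs when $M$ is free, i.e.\ $t=d$), but this does not affect the argument since in either case a length-$t$ sequence in $I$ that is both $R$- and $M$-regular exists by prime avoidance.
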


\begin{proof}
We begin with stating and proving the following claim.

\setcounter{claim}{0}

\begin{claim}\label{rm}
There exists an $R$- and $M$-sequence $\xx=x_1,\dots,x_t$ such that $M$ is isomorphic to a direct summand of $\Omega_R^t(M/\xx M)$.
\end{claim}

\begin{cpf}
There is nothing to prove when $t=0$, so let $t\ge 1$.

First, we consider the case where $M$ is a free $R$-module.
In this case, we have $t=d$.
Take an $R$-sequence $\xx=x_1,\dots,x_d$.
Then $\xx$ is also an $M$-sequence.
The Koszul complex of $\xx$ with respect to $M$ yields an exact sequence
$$
0 \to M^{\oplus\binom{d}{d}} \to M^{\oplus\binom{d}{d-1}} \to \cdots \to M^{\oplus\binom{d}{1}} \to M^{\oplus\binom{d}{0}} \to M/\xx M \to 0
$$
of $R$-modules.
Note that this gives a minimal free resolution of the $R$-module $M/\xx M$.
We see from this exact sequence that $M=M^{\oplus\binom{d}{d}}$ is isomorphic to $\Omega_R^d(M/\xx M)$.

Next, let us consider the case where $M$ is not a free $R$-module.
Put $I=\bigcap_{1\le i,j\le t}\Ann\Ext^i(M,\Omega^jM)$.
Since $M$ is free on the punctured spectrum, $\Ann\Ext^i(M,\Omega^jM)$ is either a unit ideal or an $\m$-primary ideal of $R$ for $1\le i,j\le t$.
The ideal $I$ is contained in $\Ann\Ext^1(M,\Omega M)$, which is not a unit ideal by Proposition \ref{nfsupp}(2).
Hence $I$ is an $\m$-primary ideal of $R$.
Since $\depth R=d\ge t=\depth M$, we can choose an $R$- and $M$-sequence $\xx=x_1,\dots,x_t$ in $I$.
Proposition \ref{omegan} implies that $M$ is isomorphic to a direct summand of $\Omega^t(M/\xx M)$.
Thus, the proof of the claim is completed.
\qed
\end{cpf}

Let us prove the theorem by induction on $d-t$.
When $d-t=0$, the sequence $\xx$ is a system of parameters of $R$, hence the $R$-module $M/\xx M$ has finite length.
By Lemma \ref{length}, $\Omega^t(M/\xx M)$ belongs to $\ext(\Omega^tk)=\ext(\bigoplus_{i=t}^d\Omega^ik)$, and so does $M$.
Let $d-t>0$.
There is an exact sequence $0 \to L \to M/\xx M \to N \to 0$ of $R$-modules such that $L$ has finite length and that $N$ has positive depth.
From this we get an exact sequence $0 \to \Omega^tL \to \Omega^t(M/\xx M)\oplus F \to \Omega^tN \to 0$ with $F$ free.
Lemma \ref{length} says that $\Omega^tL$ is in $\ext(\Omega^tk)$, and hence it is in $\ext(\bigoplus_{i=t}^d\Omega^ik)$.
We have only to show that $\Omega^tN$ is in $\ext(\bigoplus_{i=t}^d\Omega^ik)$.
Here we claim the following.

\begin{claim}\label{fops}
The $R$-module $\Omega^tN$ is free on the punctured spectrum of $R$.
\end{claim}

\begin{cpf}
Fix a nonmaximal prime ideal $\p$ of $R$.
As $L_\p=0$, the module $N_\p$ is isomorphic to $M_\p/\xx M_\p$.
If the sequence $\xx$ is not in $\p$ or if $\p$ is not in $\Supp M$, then $N_\p=0$, and $(\Omega_R^tN)_\p$ is isomorphic to $\Omega_{R_\p}^tN_\p=0$ up to free summand.
Hence $(\Omega_R^tN)_\p$ is $R_\p$-free.
If $\xx$ is in $\p$ and $\p$ is in $\Supp M$, then $\xx$ forms an $M_\p$-sequence.
The $R_\p$-module $(\Omega_R^tN)_\p$ is isomorphic to $\Omega_{R_\p}^tN_\p$ up to free summand, and we have isomorphisms $\Omega_{R_\p}^tN_\p\cong\Omega_{R_\p}^t(M_\p/\xx M_\p)\cong M_\p$ similarly to the argument at the beginning of the proof of the theorem.
Since $M_\p$ is $R_\p$-free, so is $\Omega_{R_\p}^tN_\p$, and so is $(\Omega_R^tN)_\p$.
\qed
\end{cpf}

We have $s:=\depth\Omega^tN=\min\{\depth N+t,d\}>t$, hence $d-s<d-t$.
By Claim \ref{fops} we can apply the induction hypothesis to $\Omega^tN$, and see that $\Omega^tN$ belongs to $\ext(\bigoplus_{i=s}^d\Omega^ik)$, which is contained in $\ext(\bigoplus_{i=t}^d\Omega^ik)$.
This completes the proof of the theorem.
\end{proof}

\begin{rem}
In Theorem \ref{extdepth}, the module $M$ does not necessarily belong to $\ext(\Omega^tk)$.
In fact, let $R$ be a Cohen-Macaulay local ring of positive dimension.
Take an extension $0 \to k \to M \to \m \to 0$ of $\m$ by $k$.
Then the middle term $M$ is an $R$-module of depth $0$ which is free on the punctured spectrum of $R$.
Note that the category $\ext(\Omega^0k)=\ext(k)$ consists of all $R$-modules of finite length.
Since $M$ does not have finite length, $M$ does not belong to $\ext(\Omega^0k)$.
\end{rem}

The result below is a direct consequence of Theorem \ref{extdepth}.

\begin{cor}\label{kore}
Let $R$ be a Cohen-Macaulay local ring.
Let $M$ be a Cohen-Macaulay $R$-module which is free on the punctured spectrum of $R$.
Then $M$ belongs to $\ext(\Omega^dk)$.
\end{cor}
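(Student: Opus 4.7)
The plan is to apply Theorem \ref{extdepth} directly. Since $M$ is a (maximal) Cohen-Macaulay $R$-module over the Cohen-Macaulay local ring $R$ of dimension $d$, by the definition adopted in this paper one has $\depth M = d$. Thus the depth parameter $t$ appearing in Theorem \ref{extdepth} equals $d$ in our situation.

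Next, with $t = d$ the indexing set of the direct sum $\bigoplus_{i=t}^{d}\Omega^i k$ collapses to the single index $i = d$, so the module $\bigoplus_{i=t}^{d}\Omega^i k$ is simply $\Omega^d k$. Since $M$ is free on the punctured spectrum by hypothesis, Theorem \ref{extdepth} applies and yields
\[
M \in \ext\!\Bigl(\bigoplus_{i=d}^{d}\Omega^i k\Bigr) = \ext(\Omega^d k),
\]
which is exactly the desired conclusion.

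There is no real obstacle here: the statement is a specialization of Theorem \ref{extdepth} to the case of maximal depth, and the only verification needed is the straightforward observation that Cohen-Macaulay modules in the sense of this paper have depth equal to $d$, so the range of syzygies in Theorem \ref{extdepth} reduces to a single term.
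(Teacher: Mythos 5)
Your proof is correct and matches the paper's intended argument exactly: the paper states that Corollary \ref{kore} is a direct consequence of Theorem \ref{extdepth}, and your observation that $\depth M = d$ for a (maximal) Cohen-Macaulay module, which collapses the direct sum $\bigoplus_{i=t}^{d}\Omega^i k$ to the single term $\Omega^d k$, is precisely the specialization the paper has in mind.
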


Recall that $R$ is said to have an {\em isolated singularity} if $R_\p$ is a regular local ring for every nonmaximal prime ideal $\p$ of $R$.
The following result is immediately obtained from Corollary \ref{kore}.

\begin{cor}\label{ceodk}
Let $R$ be a Cohen-Macaulay local ring having an isolated singularity.
Then one has $\CM(R)=\ext(\Omega^dk)$.
\end{cor}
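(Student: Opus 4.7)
The plan is to prove the two inclusions separately, both following almost directly from the material already established. The hypothesis of isolated singularity is used only to guarantee that every Cohen-Macaulay $R$-module is free on the punctured spectrum, which is exactly the condition needed to apply Corollary \ref{kore}.

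For the inclusion $\ext(\Omega^dk)\subseteq\CM(R)$, I would first note that the depth lemma applied to a minimal free resolution of $k$ gives $\depth\Omega^dk\ge\min\{d,\depth R\}=d$, so $\Omega^dk$ is a Cohen-Macaulay $R$-module (or is free, as when $R$ is regular, in which case the conclusion is trivial). By Example \ref{resex}(3), $\CM(R)$ is a resolving subcategory of $\mod R$, and in particular it is extension-closed in the sense of the definitions given in Section \ref{prelim}. Hence $\CM(R)$ is an extension-closed subcategory of $\mod R$ containing $\Omega^dk$, and therefore it contains the extension closure $\ext(\Omega^dk)$.

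For the reverse inclusion $\CM(R)\subseteq\ext(\Omega^dk)$, let $M$ be an arbitrary Cohen-Macaulay $R$-module. I would verify that $M$ is free on the punctured spectrum of $R$: for any nonmaximal prime $\p$ of $R$, the local ring $R_\p$ is regular by the isolated singularity hypothesis, and $M_\p$ is a maximal Cohen-Macaulay $R_\p$-module (since localizing a Cohen-Macaulay module at a prime in its support preserves the maximal Cohen-Macaulay property), so the Auslander--Buchsbaum formula forces $\pd_{R_\p}M_\p=0$, i.e.\ $M_\p$ is free. Then Corollary \ref{kore} immediately yields $M\in\ext(\Omega^dk)$.

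Since this is a direct consequence of results already proved, there is no substantial obstacle; the only point requiring any care is the verification that Cohen-Macaulay modules over an isolated singularity are free on the punctured spectrum, which is a standard Auslander--Buchsbaum argument.
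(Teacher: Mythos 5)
Your proof is correct and is essentially the argument the paper has in mind when it says the corollary is ``immediately obtained from Corollary \ref{kore}'': the isolated singularity hypothesis together with Auslander--Buchsbaum gives that every Cohen-Macaulay module is free on the punctured spectrum (hence the forward inclusion via Corollary \ref{kore}), and the reverse inclusion holds because $\CM(R)$ is extension-closed and contains $\Omega^dk$.
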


\begin{rem}
The conclusion of Corollary \ref{ceodk} does not mean that every Cohen-Macaulay $R$-module can be obtained by taking extensions finitely many times from the indecomposable summands of $\Omega^dk$.
So, the conclusion of the corollary does not mean that the Grothendieck group of $\CM(R)$ is finitely generated.
\end{rem}

Using Corollary \ref{ceodk}, we can get the following structure result of the stable category of Cohen-Macaulay modules.

\begin{cor}\label{omegadk}
Let $R$ be a Gorenstein local ring with an isolated singularity.
Then the thick subcategory of the triangulated category $\lCM(R)$ generated by $\Omega^dk$ coincides with $\lCM(R)$.
\end{cor}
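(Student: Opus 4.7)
The plan is to lift Corollary \ref{ceodk}, which describes $\CM(R)$ as an extension closure in $\mod R$, to the desired statement about the triangulated category $\lCM(R)$. Let $\T$ denote the thick subcategory of $\lCM(R)$ generated by $\Omega^dk$. Since every object of $\lCM(R)$ is, by definition, a Cohen-Macaulay $R$-module, it suffices to show that every module $M\in\CM(R)$ lies in $\T$ when viewed as an object of $\lCM(R)$. By Corollary \ref{ceodk} we have $\CM(R)=\ext(\Omega^dk)$, so the goal reduces to showing $\ext(\Omega^dk)\subseteq\T$.

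First I would verify that $\Omega^dk$ is itself a Cohen-Macaulay $R$-module; this holds because $R$ is Gorenstein of dimension $d$, so any $d^\text{th}$ syzygy is Cohen-Macaulay. Then I would prove by induction on $n\ge 0$ that every module in $\ext^n(\Omega^dk)$ is Cohen-Macaulay and lies in $\T$. The base case $n=0$ is immediate: $\ext^0(\Omega^dk)=\add(\Omega^dk)$ consists of direct summands of finite direct sums of $\Omega^dk$, and any thick subcategory of $\lCM(R)$ is closed under finite direct sums and direct summands.

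For the inductive step, suppose the claim holds for $n-1$. A module $Y\in\ext^n(\Omega^dk)$ is a direct summand of some $Y'$ fitting in an exact sequence
$$
0 \to A \to Y' \to B \to 0
$$
in $\mod R$ with $A,B\in\ext^{n-1}(\Omega^dk)$. By the inductive hypothesis, $A$ and $B$ are Cohen-Macaulay, so the depth lemma gives $\depth Y'\ge d$, hence $Y'$ is Cohen-Macaulay and the displayed sequence is an exact sequence in $\CM(R)$. By the construction of the triangulated structure on $\lCM(R)$ recalled in the excerpt, this yields an exact triangle $A\to Y'\to B\to\Sigma A$ in $\lCM(R)$. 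Since $A,B\in\T$ by induction and $\T$ is thick, $Y'\in\T$, and then $Y\in\T$ by closure under direct summands.

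Combining the induction with $\ext(\Omega^dk)=\bigcup_{n\ge 0}\ext^n(\Omega^dk)$ and Corollary \ref{ceodk} gives $\CM(R)\subseteq\T$, hence $\lCM(R)=\T$. The one point that requires care — and which I would flag as the main obstacle — is the interplay between extensions taken in $\mod R$ and exact triangles in $\lCM(R)$: a priori the ext-closure is computed using arbitrary short exact sequences in $\mod R$, not just those in $\CM(R)$, so one must check that at each inductive stage the middle term remains Cohen-Macaulay. This is exactly what the depth lemma secures, and it is the glue that lets the module-theoretic generation statement of Corollary \ref{ceodk} pass to the triangulated setting.
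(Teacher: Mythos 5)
Your proof matches the paper's: both invoke Corollary \ref{ceodk} and then induct on the level $n$ of the extension closure $\ext^n(\Omega^dk)$, converting the short exact sequence at each stage into an exact triangle in $\lCM(R)$ and using thickness to conclude. The paper is more explicit about the conversion step itself, building the pushout along $L_i\hookrightarrow F_i$ and noting the middle column $0\to F_i\to N_i'\to N_i\to 0$ splits because $\Ext_R^1(N_i,F_i)=0$ when $R$ is Gorenstein and $N_i$ is Cohen-Macaulay, whereas you appeal directly to the recalled construction; conversely, your explicit depth-lemma check that the middle term of the extension remains Cohen-Macaulay is a genuine point that the paper leaves tacit, and you are right to flag it as the spot where module-level extensions must be seen to stay inside $\CM(R)$.
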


\begin{proof}
Let $\Y$ be the thick subcategory of $\lCM(R)$ generated by $\Omega^dk$.
Fix a Cohen-Macaulay $R$-module $M$.
We want to prove that $M$ belongs to $\Y$.
Since $M$ is in $\ext(\Omega^dk)$ by Corollary \ref{ceodk}, $M$ is in $\ext^n(\Omega^dk)$ for some $n\ge 0$.
Let us show that $M$ belongs to $\Y$ by induction on $n$.
When $n=0$, the module $M$ is in $\add(\Omega^dk)$, and clearly $M$ belongs to $\Y$.
Let $n\ge 1$.
There are exact sequences
$$
0 \to L_i \overset{f_i}{\to} M_i \overset{g_i}{\to} N_i \to 0\quad (1\le i\le s)
$$
of $R$-modules with $L_i,N_i\in\ext^{n-1}(\Omega^dk)$ such that $M$ is a direct summand of the direct sum $M_1\oplus\cdots\oplus M_s$.
Note that $L_i,N_i$ are Cohen-Macaulay $R$-modules.
The induction hypothesis implies that $L_i,N_i$ belong to $\Y$.
For each $1\le i\le s$, there is an exact sequence $0 \to L_i \to F_i \to \Sigma L_i \to 0$ in $\CM(R)$ such that $F_i$ is free.
We make the following pushout diagram.
$$
\begin{CD}
@. 0 @. 0 \\
@. @VVV @VVV \\
0 @>>> L_i @>>> F_i @>>> \Sigma L_i @>>> 0 \\
@. @V{f_i}VV @VVV @| \\
0 @>>> M_i @>>> N_i' @>>> \Sigma L_i @>>> 0 \\
@. @V{g_i}VV @VVV \\
@. N_i @= N_i \\
@. @VVV @VVV \\
@. 0 @. 0
\end{CD}
$$
Since $R$ is Gorenstein and $N_i$ is Cohen-Macaulay over $R$, the middle column splits.
Hence we get an exact triangle $L_i \overset{f_i}{\to} M_i \overset{g_i}{\to} N_i \to \Sigma L_i$ in $\lCM(R)$.
The thickness of $\Y$ shows that $M_i$ belongs to $\Y$ for each $1\le i\le s$, which implies that $M$ also belongs to $\Y$.
\end{proof}

\begin{rem}
\begin{enumerate}[(1)]
\item
Corollary \ref{omegadk} can also be proved by using \cite[Theorem VI.8]{Scho}.
\item
In Corollary \ref{omegadk}, the assumption that $R$ has an isolated singularity is indispensable.
We will actually observe in Remark \ref{yogen} that the assertion of Corollary \ref{omegadk} does not necessarily hold without that assumption.
\item
A generalization of Corollary \ref{omegadk} will be obtained in Theorem \ref{main}(2).
\end{enumerate}
\end{rem}

\section{Cohen-Macaulay modules and completion}\label{cmac}

In this section, we compare Cohen-Macaulay $R$-modules and Cohen-Macaulay $\widehat R$-modules, where $\widehat R$ denotes the $\m$-adic completion of $R$.
We do this by using the structure result of modules that are free on the punctured spectrum, which was obtained in the previous section.
We start by investigating the relationship between extension closures and completion.

\begin{prop}\label{exthat}
Let $n$ be a nonnegative integer, and let $X$ be an $R$-module which is free on the punctured spectrum of $R$.
Then for any $\widehat R$-module $N$ in $\ext_{\widehat R}^n\widehat X$ there exists an $R$-module $M$ in $\ext_R^nX$ such that $N$ is isomorphic to a direct summand of $\widehat M$.
\end{prop}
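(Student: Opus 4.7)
The plan is to prove this by induction on $n$, with the base case $n=0$ being trivial: if $N\in\add_{\widehat R}\widehat X$ is a summand of $\widehat X^{\oplus m}$, take $M=X^{\oplus m}\in\add_R X$.

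For the inductive step, suppose $N\in\ext_{\widehat R}^n\widehat X$. By definition $N$ is a summand of a finite direct sum of $\widehat R$-modules $Y$ each fitting into an exact sequence $0\to A\to Y\to B\to 0$ with $A,B\in\ext_{\widehat R}^{n-1}\widehat X$, so after passing to the direct sum I may assume $N$ is a summand of a single such $Y$. Applying the induction hypothesis twice, I obtain $A',B'\in\ext_R^{n-1}X$ together with direct sum decompositions $\widehat{A'}=A\oplus\widetilde A$ and $\widehat{B'}=B\oplus\widetilde B$. The candidate for $M$ will then be built as the middle term of an extension $0\to A'\to M\to B'\to 0$ over $R$ whose completion contains $Y$ as a summand; such an $M$ automatically lies in $\ext_R^nX$.

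The key step — and the main obstacle — is lifting the extension class of $Y$ from $\widehat R$ to $R$. For this I need to know that $A',B'$ are free on the punctured spectrum of $R$: since a short exact sequence $0\to A_\p\to Y_\p\to B_\p\to 0$ with $B_\p$ free over $R_\p$ splits, the class of modules free on the punctured spectrum is closed under extensions (and obviously under direct sums and summands), so by induction it contains $\ext_R^{n-1}X$. Consequently $\Ext_R^1(B',A')$ is supported only at $\m$, hence has finite length, hence coincides with its own $\m$-adic completion. Combined with flat base change this gives an isomorphism
$$
\Ext_R^1(B',A')\ \xrightarrow{\ \cong\ }\ \Ext_R^1(B',A')\otimes_R\widehat R\ \xrightarrow{\ \cong\ }\ \Ext_{\widehat R}^1(\widehat{B'},\widehat{A'}).
$$
The class $\xi\in\Ext_{\widehat R}^1(B,A)$ representing $Y$ sits naturally in the summand $\Ext_{\widehat R}^1(B,A)\subseteq\Ext_{\widehat R}^1(\widehat{B'},\widehat{A'})$ induced by the decompositions above, and this target Ext group is identified with $\Ext_R^1(B',A')$. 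Hence $\xi$ is realised by some class $\xi'\in\Ext_R^1(B',A')$, giving an exact sequence $0\to A'\to M\to B'\to 0$ over $R$ with $M\in\ext_R^nX$.

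Finally I complete this sequence and read off the conclusion. The class of the completed sequence is $\xi$ itself, regarded as an element of the $(A,B)$-summand of
$$
\Ext_{\widehat R}^1(\widehat{B'},\widehat{A'})=\Ext_{\widehat R}^1(B,A)\oplus\Ext_{\widehat R}^1(B,\widetilde A)\oplus\Ext_{\widehat R}^1(\widetilde B,A)\oplus\Ext_{\widehat R}^1(\widetilde B,\widetilde A),
$$
so the extension decomposes as $\widehat M\cong Y\oplus\widetilde A\oplus\widetilde B$. In particular $N$, being a summand of $Y$, is a summand of $\widehat M$, which closes the induction.
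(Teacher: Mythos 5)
Your proof is correct and follows essentially the same approach as the paper's: induction on $n$, reduction to a single middle term $Y$ via the additive closure, replacement of the outer terms by direct summands of completions of modules obtained from the inductive hypothesis, and exploitation of the finite length of $\Ext^1_R$ (a consequence of freeness on the punctured spectrum, which the paper derives from its Proposition on localization of extension closures) to identify $\Ext^1_R$ with $\Ext^1_{\widehat R}$ of the completions. The only cosmetic difference is bookkeeping: the paper explicitly constructs the enlarged extension $\sigma$ by adding identity maps on the complementary summands and then lifts $\sigma$ as a whole, whereas you track the class $\xi$ through the four-fold direct sum decomposition of the Ext group and read off $\widehat M\cong Y\oplus\widetilde A\oplus\widetilde B$ directly; these are equivalent formulations of the same step.
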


\begin{proof}
We prove the proposition by induction on $n$.
When $n=0$, the $\widehat R$-module $N$ is in $\add_{\widehat R}\widehat X$, and $N$ is isomorphic to a direct summand of $(\widehat X)^{\oplus m}$ for some $m\ge 0$.
Then we can take $M:=X^{\oplus m}\in\add_RX$.
When $n\ge 1$, there is an exact sequence $0 \to A \overset{f}{\to} B \overset{g}{\to} C \to 0$ of $\widehat R$-modules with $A,C\in\ext_{\widehat R}^{n-1}(\widehat X)$ such that $N$ is a direct summand of $B$.
The induction hypothesis implies that there exist $S,T\in\ext_R^{n-1}X$ such that $A$ and $C$ are isomorphic to direct summands of $\widehat S$ and $\widehat T$, respectively.
Hence we have isomorphisms $A\oplus A'\cong\widehat S$ and $C\oplus C'\cong\widehat T$ of $\widehat R$-modules.
Taking the direct sum of the identity map $A'\to A'$ and $f$, and the direct sum of the identity map $C'\to C'$ and $g$, we obtain an exact sequence $\sigma:0 \to \widehat S \to B' \to \widehat T \to 0$ of $\widehat R$-modules such that $N$ is a direct summand of $B'$.
The exact sequence $\sigma$ can be regarded as an element of $\Ext_{\widehat R}^1(\widehat T,\widehat S)$, which is isomorphic to the completion $(\Ext_R^1(T,S))^{\widehat\quad}$ of $\Ext_R^1(T,S)$.
By using Proposition \ref{clloc}(2) and the assumption that $X$ is free on the punctured spectrum of $R$, we observe that $T$ is free on the punctured spectrum of $R$.
Hence the $R$-module $\Ext_R^1(T,S)$ has finite length, and so we have $(\Ext_R^1(T,S))^{\widehat\quad}\cong\Ext_R^1(T,S)$.
Therefore there exists an exact sequence $\tau: 0 \to S \overset{\alpha}{\to} M \overset{\beta}{\to} T \to 0$ of $R$-modules such that $\sigma$ is equivalent to the exact sequence $\widehat\tau: 0 \to \widehat S \overset{\widehat\alpha}{\to} \widehat M \overset{\widehat\beta}{\to} \widehat T \to 0$.
By the choice of $S$ and $T$, the module $M$ belongs to $\ext_R^nX$, and $B'$ is isomorphic to $\widehat M$.
This finishes the proof of the proposition.
\end{proof}

If $R$ is Cohen-Macaulay, then every $\widehat R$-module that is free on the punctured spectrum of $\widehat R$ is, up to direct summand, the completion of some $R$-module that is free on the punctured spectrum of $R$:

\begin{thm}\label{dephat}
Let $R$ be a Cohen-Macaulay local ring.
Then for any $\widehat R$-module $N$ which is free on the punctured spectrum of $\widehat R$, there exists an $R$-module $M$ which is free on the punctured spectrum of $R$ with $\depth_RM=\depth_{\widehat R}N$ such that $N$ is isomorphic to a direct summand of $\widehat M$.
\end{thm}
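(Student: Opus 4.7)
The plan is to combine Theorem \ref{extdepth}, which controls how a module free on the punctured spectrum sits inside an extension closure of syzygies of $k$, with Proposition \ref{exthat}, which lifts such extensions through the completion functor.

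First I would set $t=\depth_{\widehat R}N$ and observe that $N$ is a Cohen--Macaulay situation-friendly module: since $\widehat R$ is Cohen--Macaulay of dimension $d$ and $N$ is free on its punctured spectrum, Theorem \ref{extdepth} applied over $\widehat R$ gives
$$
N\in\ext_{\widehat R}\!\Bigl(\,\bigoplus_{i=t}^{d}\Omega_{\widehat R}^{i}k\Bigr).
$$
Since a minimal free resolution of the $R$-module $k$ remains a minimal free resolution after $\m$-adic completion (because $\widehat R$ is flat over $R$ and the residue fields agree), we have $\Omega_{\widehat R}^{i}k\cong\widehat{\Omega_R^{i}k}$ for every $i$. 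Setting $X:=\bigoplus_{i=t}^{d}\Omega_R^{i}k$, this identifies the ambient category above with $\ext_{\widehat R}\widehat X$.

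Next I would verify that $X$ is free on the punctured spectrum of $R$: for any nonmaximal prime $\p$, we have $k_\p=0$, so $(\Omega_R^{i}k)_\p$ is a free $R_\p$-module for each $i$. Hence Proposition \ref{exthat} applies to $X$, giving some $M\in\ext_R X$ such that $N$ is isomorphic to a direct summand of $\widehat M$. To see that $M$ itself is free on the punctured spectrum of $R$, I would localize at a nonmaximal prime $\p$: by Proposition \ref{clloc}(2), $M_\p\in\ext_{R_\p}X_\p$; but $X_\p$ consists of free $R_\p$-modules, and the extension closure of the free $R_\p$-module $R_\p$ contains only free modules (a short exact sequence ending in a free module splits). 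Hence $M_\p$ is free.

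Finally, for the depth equality, I would use Proposition \ref{cldep}(1) to obtain $\depth_R M\ge\depth_R X$. The depth lemma applied inductively to the minimal free resolution of $k$ shows $\depth_R\Omega_R^{i}k=\min(i,d)$ for $i\ge 0$, so $\depth_R X=t$, giving $\depth_R M\ge t$. On the other hand, since completion preserves depth and $N$ is a direct summand of $\widehat M$, we have $t=\depth_{\widehat R}N\ge\depth_{\widehat R}\widehat M=\depth_R M$. These two inequalities force $\depth_R M=t=\depth_{\widehat R}N$, completing the proof. I do not anticipate a serious obstacle here: Theorem \ref{extdepth} and Proposition \ref{exthat} do the heavy lifting, and the only subtle point is the two-sided pinning of $\depth_R M$, which falls out cleanly once one notes that depth is preserved under direct summands only as a lower bound.
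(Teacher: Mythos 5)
Your proposal is correct and follows essentially the same route as the paper's own proof: apply Theorem \ref{extdepth} over $\widehat R$, identify $\bigoplus_{i=t}^{d}\Omega_{\widehat R}^{i}k$ with $\widehat{L}$ for $L=\bigoplus_{i=t}^{d}\Omega_R^{i}k$, invoke Proposition \ref{exthat} to descend to some $M\in\ext_R L$, and then use Proposition \ref{clloc}(2) and Proposition \ref{cldep}(1) together with the direct-summand inequality $\depth_{\widehat R}N\ge\depth_{\widehat R}\widehat M=\depth_R M$ to pin down the depth. You merely spell out a few steps the paper treats as immediate, such as $\Omega_{\widehat R}^{i}k\cong\widehat{\Omega_R^{i}k}$ and the computation $\depth_R L=t$.
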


This result is already known: Wiegand \cite{W} essentially proves a more general result by applying Elkik's theorem \cite{E2}.
It is explicitly stated in \cite[Corollary 3.5]{FSWW}.
We give here a very different proof.

\begin{proof}
By Theorem \ref{extdepth}, $N$ belongs to $\ext_{\widehat R}(\bigoplus_{i=t}^d\Omega_{\widehat R}^ik)=\ext_{\widehat R}(\widehat{L})$, where $t=\depth_{\widehat R}N$ and $L=\bigoplus_{i=t}^d\Omega_R^ik$.
Since $L$ is free on the punctured spectrum of $R$, it follows from Proposition \ref{exthat} that there exists an $R$-module $M\in\ext_R(L)$ such that the $\widehat R$-module $N$ is isomorphic to a direct summand of $\widehat M$.
We see from Proposition \ref{clloc}(2) that $M$ is free on the punctured spectrum of $R$.
We have $\depth_{\widehat R}N\ge\depth_{\widehat R}\widehat M=\depth_RM$, and $\depth_RM\ge\depth_RL=t=\depth_{\widehat R}N$ by Proposition \ref{cldep}(1).
Thus the equality $\depth_RM=\depth_{\widehat R}N$ holds.
\end{proof}

The corollary below immediately follows from Theorem \ref{dephat}.

\begin{cor}\label{pfhat}
Let $R$ be a Cohen-Macaulay local ring.
Then for any Cohen-Macaulay $\widehat R$-module $N$ which is free on the punctured spectrum of $\widehat R$, there exists a Cohen-Macaulay $R$-module $M$ which is free on the punctured spectrum of $R$ such that $N$ is isomorphic to a direct summand of $\widehat M$.
\end{cor}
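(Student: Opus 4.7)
The plan is to deduce Corollary \ref{pfhat} directly from Theorem \ref{dephat} by verifying that the Cohen-Macaulay hypothesis on $N$ forces the module $M$ produced by that theorem to be Cohen-Macaulay as well. Concretely, I would begin by observing that since $N$ is a Cohen-Macaulay $\widehat R$-module, one has $\depth_{\widehat R} N = \dim \widehat R$, and since completion preserves Krull dimension, $\dim \widehat R = \dim R = d$.

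Next, I would apply Theorem \ref{dephat} to the $\widehat R$-module $N$, which is free on the punctured spectrum of $\widehat R$ by assumption. The theorem furnishes an $R$-module $M$ that is free on the punctured spectrum of $R$, satisfies $\depth_R M = \depth_{\widehat R} N$, and admits $N$ as a direct summand of $\widehat M$. Combining this with the depth calculation of the previous paragraph yields $\depth_R M = d = \dim R$, so $M$ is a Cohen-Macaulay $R$-module, as required.

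There is essentially no obstacle here; the entire content of the corollary resides in the equality $\depth_R M = \depth_{\widehat R} N$ that Theorem \ref{dephat} guarantees, together with the elementary fact that a Cohen-Macaulay module over a local ring and over its completion share the same depth equal to $d$. The only point worth being careful about is to invoke Theorem \ref{dephat} with the correct depth value, so that the resulting $M$ is not merely an $R$-module of some depth but one whose depth matches the dimension of $R$, which is precisely the defining property of (maximal) Cohen-Macaulayness used throughout the paper.
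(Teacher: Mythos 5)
Your proposal is correct and matches the paper's intent exactly: the paper states that the corollary follows immediately from Theorem \ref{dephat}, and the only observation needed is precisely the one you supply, namely that $\depth_R M = \depth_{\widehat R} N = d = \dim R$ forces $M$ to be Cohen-Macaulay.
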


Here we recall a well-known elementary fact.

\begin{prop}\label{kisoteki}
If $\widehat R$ has an isolated singularity, then so does $R$.
The converse of the first assertion holds if $R$ is excellent.
\end{prop}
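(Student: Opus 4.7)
The plan is to use faithful flatness of $R\to\widehat R$ together with two standard results on a flat local homomorphism $A\to B$ of noetherian local rings: (a) if $B$ is regular then so is $A$, and (b) if $A$ is regular and the closed fibre $B/\m_A B$ is regular then $B$ is regular. For the converse I shall additionally invoke the defining property of an excellent ring that its formal fibres are geometrically regular, and in particular regular.

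For the first assertion, I will take an arbitrary nonmaximal prime $\p$ of $R$ and, using faithful flatness of $\widehat R$, choose a prime $\mathfrak P$ of $\widehat R$ with $\mathfrak P\cap R=\p$. Since $\widehat\m\cap R=\m\neq\p$, this $\mathfrak P$ must differ from $\widehat\m$, so it is nonmaximal in $\widehat R$. The hypothesis that $\widehat R$ has an isolated singularity then gives that $\widehat R_{\mathfrak P}$ is regular, and applying fact (a) to the flat local map $R_\p\to\widehat R_{\mathfrak P}$ shows that $R_\p$ is regular.

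For the converse, assuming $R$ is excellent with an isolated singularity, I will take a nonmaximal prime $\mathfrak P$ of $\widehat R$ and set $\p=\mathfrak P\cap R$. First I rule out $\p=\m$: otherwise $\mathfrak P\supseteq\m\widehat R=\widehat\m$ would force $\mathfrak P=\widehat\m$, contradicting nonmaximality. Hence $\p$ is nonmaximal, so $R_\p$ is regular by hypothesis. Excellence ensures that the formal fibre $\widehat R_{\mathfrak P}/\p\widehat R_{\mathfrak P}$ is regular, and fact (b) applied to the flat local map $R_\p\to\widehat R_{\mathfrak P}$ then yields regularity of $\widehat R_{\mathfrak P}$.

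I anticipate no real obstacle: once the two flat-local ascent/descent statements for regularity and the fibral property of excellent rings are in hand, the argument is essentially a bookkeeping exercise. The only point requiring a little care is verifying in each direction that the chosen prime of $\widehat R$ is nonmaximal, and this is handled by the identity $\widehat\m\cap R=\m$.
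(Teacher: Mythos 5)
Your proof is correct. The paper offers no proof of this proposition — it is stated as ``a well-known elementary fact'' — so there is no argument in the text to compare against; the route you take, descent of regularity along the flat local map $R_\p\to\widehat R_{\mathfrak P}$ for the forward direction and ascent of regularity through the geometrically regular formal fibres guaranteed by excellence for the converse, is precisely the standard argument, and your bookkeeping of nonmaximality via $\widehat\m\cap R=\m$ and $\m\widehat R=\widehat\m$ is complete.
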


\begin{rem}
It is known that every complete local ring containing a field is the completion of some local ring having an isolated singularity; see \cite{He}.
Hence, in general, having an isolated singularity does not ascend to the completion.
\end{rem}

The following result is a consequence of Proposition \ref{kisoteki} and Corollary \ref{pfhat}.

\begin{cor}\label{ishat}
Let $R$ be a Cohen-Macaulay local ring whose completion has an isolated singularity (e.g. a Cohen-Macaulay excellent local ring with an isolated singularity).
Then for any Cohen-Macaulay $\widehat R$-module $N$ there exists a Cohen-Macaulay $R$-module $M$ such that $N$ is isomorphic to a direct summand of $\widehat M$.
\end{cor}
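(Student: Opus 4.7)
The plan is to reduce Corollary \ref{ishat} to Corollary \ref{pfhat}. That corollary already gives the desired conclusion \emph{provided} that the given Cohen-Macaulay $\widehat R$-module $N$ is free on the punctured spectrum of $\widehat R$. So the only substantive task is to verify, using the isolated-singularity hypothesis on $\widehat R$, that this freeness holds automatically for every Cohen-Macaulay $\widehat R$-module.

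To do this, I would fix an arbitrary Cohen-Macaulay $\widehat R$-module $N$ and a non-maximal prime $\widehat\p$ of $\widehat R$. The hypothesis that $\widehat R$ has an isolated singularity says exactly that $\widehat R_{\widehat\p}$ is a regular local ring. If $\widehat\p\notin\Supp N$ then $N_{\widehat\p}=0$ is trivially free; otherwise, the localization $N_{\widehat\p}$ is a maximal Cohen-Macaulay module over $\widehat R_{\widehat\p}$, since the localization of a maximal Cohen-Macaulay module over a Cohen-Macaulay local ring at a prime in its support is again maximal Cohen-Macaulay. The Auslander--Buchsbaum formula then gives $\pd_{\widehat R_{\widehat\p}}N_{\widehat\p}=\depth \widehat R_{\widehat\p}-\depth_{\widehat R_{\widehat\p}}N_{\widehat\p}=0$, so $N_{\widehat\p}$ is a free $\widehat R_{\widehat\p}$-module. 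As $\widehat\p$ was an arbitrary non-maximal prime, $N$ is free on the punctured spectrum of $\widehat R$, and Corollary \ref{pfhat} then produces the desired Cohen-Macaulay $R$-module $M$ with $N$ a direct summand of $\widehat M$.

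For the parenthetical example, I would invoke the second assertion of Proposition \ref{kisoteki}: if $R$ is excellent and Cohen-Macaulay with an isolated singularity, then $\widehat R$ also has an isolated singularity, and the argument above applies to $R$.

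There is no genuine obstacle in this proof; it is essentially a bookkeeping combination of Corollary \ref{pfhat} with the standard fact that maximal Cohen-Macaulay modules over a regular local ring are free. The only small point to keep in mind is that the Cohen-Macaulay hypothesis on $R$ (and hence on $\widehat R$) is used implicitly to guarantee that maximal Cohen-Macaulay localizes well to non-maximal primes.
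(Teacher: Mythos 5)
Your proof is correct and follows essentially the same route as the paper: reduce to Corollary \ref{pfhat} by observing that the isolated-singularity hypothesis on $\widehat R$ makes every Cohen--Macaulay $\widehat R$-module free on the punctured spectrum (via Auslander--Buchsbaum), and invoke Proposition \ref{kisoteki} for the excellent-ring example.
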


We recall a definition concerning an equivalence of categories.

\begin{dfn}
Let $F:\A\to\B$ be an additive functor of additive categories.
We say that $F$ is an {\em equivalence up to direct summand} if it satisfies the following two conditions.
\begin{enumerate}[(1)]
\item
$F$ is fully faithful.
\item
$F$ is essentially dense, namely, for each object $B$ of $\B$ there exists an object $A$ of $\A$ such that $B$ is isomorphic to a direct summand of $FA$.
\end{enumerate}
\end{dfn}

Now, by using Corollary \ref{ishat}, we recover a recent result of Keller, Murfet and Van den Bergh \cite[Corollary A.7]{KMV}.

\begin{cor}[Keller-Murfet-Van den Bergh]\label{k-vdb}
Let $R$ be a Cohen-Macaulay local ring whose completion has an isolated singularity (e.g. a Cohen-Macaulay excellent local ring with an isolated singularity).
Then the natural functor $\lCM(R)\to\lCM(\widehat R)$ is an equivalence up to direct summand.
\end{cor}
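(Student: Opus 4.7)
The plan is to verify the two conditions for an equivalence up to direct summand, essential density and fully faithfulness, separately. For essential density, the work has already been done: given any Cohen-Macaulay $\widehat R$-module $N$, Corollary \ref{ishat} produces a Cohen-Macaulay $R$-module $M$ such that $N$ is a direct summand of $\widehat M$ as $\widehat R$-modules, hence also as objects of $\lCM(\widehat R)$. So I would dispense with essential density in one sentence and devote the rest of the argument to fully faithfulness.

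For fully faithfulness, I need the natural map
\[
\lhom_R(M,N) \longrightarrow \lhom_{\widehat R}(\widehat M,\widehat N)
\]
to be bijective for all $M,N \in \CM(R)$. First I would note that the hypothesis together with the easy direction of Proposition \ref{kisoteki} gives that $R$ itself has an isolated singularity; consequently, for any nonmaximal prime $\p$, the localization $M_\p$ is free over $R_\p$, so every map $M_\p \to N_\p$ factors through a free module. This makes $\lhom_R(M,N)_\p = 0$ for every nonmaximal $\p$, so $\lhom_R(M,N)$ has finite length, and hence it is canonically isomorphic to its $\m$-adic completion $\lhom_R(M,N)\otimes_R\widehat R$.

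The second step is to identify this completion with $\lhom_{\widehat R}(\widehat M,\widehat N)$. Flatness of $\widehat R$ over $R$ together with finite presentation of $M$ gives the standard base-change isomorphism
\[
\Hom_R(M,N)\otimes_R\widehat R \;\xrightarrow{\;\sim\;}\; \Hom_{\widehat R}(\widehat M,\widehat N).
\]
To descend this to stable Hom, I use the description of $\F_R(M,N)$ as the image of the natural evaluation map $N\otimes_R \Hom_R(M,R) \to \Hom_R(M,N)$, $n\otimes\varphi\mapsto\varphi(-)n$; the image is exactly the homomorphisms factoring through a finitely generated free $R$-module. Applying $-\otimes_R\widehat R$, which is exact and commutes with $\Hom_R(M,-)$ on finitely presented $M$, identifies $\F_R(M,N)\otimes_R\widehat R$ with $\F_{\widehat R}(\widehat M,\widehat N)$ inside $\Hom_{\widehat R}(\widehat M,\widehat N)$. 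Taking quotients produces the desired isomorphism $\lhom_R(M,N)\otimes_R\widehat R \cong \lhom_{\widehat R}(\widehat M,\widehat N)$, and composing with the completion isomorphism from the first step finishes fully faithfulness.

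The main obstacle I expect is the clean handling of the submodule $\F_R(M,N)$ under completion, since it is defined by a non-functorial-looking factorization condition. The key point that removes the obstacle is recognizing $\F_R(M,N)$ as the image of $N\otimes_R M^{\ast}\to \Hom_R(M,N)$, which turns the property ``factors through a free module'' into something manifestly compatible with flat base change. Everything else is essentially formal: isolated singularity kills support away from $\m$, finite length modules are their own completions, and Corollary \ref{ishat} handles essential density.
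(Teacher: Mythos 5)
Your proof is correct and differs from the paper's only in how it packages the fully-faithfulness argument. The paper invokes the Auslander--Bridger isomorphism $\lhom_R(M,N)\cong\Tor_1^R(\tr_RM,N)$, so that finite length and flat base change become statements about a Tor module, where both are automatic. You instead work directly with the definition $\lhom_R(M,N)=\Hom_R(M,N)/\F_R(M,N)$, and the key technical move is your recognition of $\F_R(M,N)$ as the image of the evaluation map $N\otimes_R\Hom_R(M,R)\to\Hom_R(M,N)$; this is exactly what makes the subobject ``maps factoring through a free module'' behave well under flat base change (flatness preserves images) and under localization (which is how you obtain finite length). Both routes are valid: the paper's is shorter because it reuses the transpose/Tor machinery already present in the paper, whereas yours is more self-contained and makes visible why stable Hom of finitely generated Cohen--Macaulay modules behaves so well under flat base change. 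In both arguments, the essential-density half is the same one-line appeal to Corollary \ref{ishat}, and the finite-length step rests on the same observation that (via Proposition \ref{kisoteki}) $R$ itself has an isolated singularity so every Cohen--Macaulay module is free on the punctured spectrum.
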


\begin{proof}
Let $M,N\in\lCM(R)$.
We have functorial isomorphisms
\begin{align*}
\lhom_R(M,N) & \cong\Tor_1^R(\tr_RM,N)\cong(\Tor_1^R(\tr_RM,N))^{\widehat\quad}\\
& \cong\Tor_1^{\widehat R}(\tr_{\widehat R}\widehat M,\widehat N)\cong\lhom_{\widehat R}(\widehat M,\widehat N),
\end{align*}
where the second isomorphism follows from the fact that the $R$-module $\Tor_1^R(\tr_RM,N)$ has finite length since $N$ is free on the punctured spectrum of $R$.
Therefore the functor $\lCM(R)\to\lCM(\widehat R)$ is fully faithful.
The essential density of this functor follows from Corollary \ref{ishat}.
\end{proof}

Applying Corollary \ref{pfhat}, we can also obtain some results concerning ascent and descent of the Cohen-Macaulay representation type of Cohen-Macaulay local rings.
For this, we state an easy lemma.

\begin{lem}\label{asdes}
An $R$-module $M$ is free on the punctured spectrum of $R$ if and only if $\widehat M$ is free on the punctured spectrum of $\widehat R$.
\end{lem}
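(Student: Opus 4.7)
The plan is to reduce the freeness-on-the-punctured-spectrum condition on both sides to a finite-length statement about a single $\Ext^1$ module, and then invoke the (faithful) flatness of $\widehat R$ over $R$.

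First I would observe, using Proposition \ref{nfsupp}(2), that $\V_R(M)=\Supp_R\Ext_R^1(M,\Omega_RM)$ is a closed subset of $\Spec R$. Since the only closed point of $\Spec R$ is $\m$, $M$ is free on the punctured spectrum of $R$ if and only if $\V_R(M)\subseteq\{\m\}$, equivalently the finitely generated $R$-module $\Ext_R^1(M,\Omega_RM)$ has finite length. Likewise $\widehat M$ is free on the punctured spectrum of $\widehat R$ if and only if $\Ext_{\widehat R}^1(\widehat M,\Omega_{\widehat R}\widehat M)$ has finite length as an $\widehat R$-module.

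Next, I would note that the completion of a minimal free resolution of $M$ over $R$ is a minimal free resolution of $\widehat M$ over $\widehat R$: this uses that completion sends free $R$-modules to free $\widehat R$-modules, that $\widehat R$ is $R$-flat, and that the minimality condition $\partial_i(F_i)\subseteq\m F_{i-1}$ is preserved since $\m F_{i-1}$ completes to $\m\widehat R\cdot\widehat F_{i-1}$. In particular $\Omega_{\widehat R}\widehat M\cong\widehat{\Omega_RM}$. Combining this with the flatness of $\widehat R$ over $R$ yields a natural isomorphism
\[
\Ext_{\widehat R}^1\bigl(\widehat M,\Omega_{\widehat R}\widehat M\bigr)\cong\widehat R\otimes_R\Ext_R^1(M,\Omega_RM)=\bigl(\Ext_R^1(M,\Omega_RM)\bigr)^{\widehat{\;}}.
\]

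Finally, I would use the elementary fact that for a finitely generated $R$-module $N$, $N$ has finite length over $R$ if and only if $\widehat N$ has finite length over $\widehat R$: if $\m^kN=0$ then $(\m\widehat R)^k\widehat N=0$, and conversely if $(\m\widehat R)^k\widehat N=0$ then $(\m^kN)^{\widehat{\;}}=0$, whence $\m^kN=0$ by the faithful flatness of $\widehat R$ over $R$. Applying this to $N=\Ext_R^1(M,\Omega_RM)$ and chaining the equivalences from the first paragraph proves the lemma. There is no real obstacle here; the whole argument is a bookkeeping exercise once the nonfree locus is expressed via $\Ext^1$ and completion is used to move between $R$ and $\widehat R$.
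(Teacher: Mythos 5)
Your proof is correct, and since the paper states this as ``an easy lemma'' without supplying a proof, there is nothing in the source to compare against. Your route — reduce both sides to the finite-length condition on $\Ext^1(M,\Omega M)$ via Proposition \ref{nfsupp}(2), show that a minimal free resolution completes to a minimal free resolution so that $\Omega_{\widehat R}\widehat M\cong\widehat{\Omega_RM}$, use flat base change for $\Ext$, and finish with faithfully flat descent of the finite-length condition — is clean and each step checks out. One remark: the minimality of the completed resolution is not a throwaway detail here, since Proposition \ref{nfsupp}(2) expresses the nonfree locus specifically in terms of the \emph{minimal} first syzygy, and a non-minimal syzygy can enlarge $\Ext^1(M,\Omega M)$ by free summands of $\Omega M$; you correctly supply the minimality argument. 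An alternative, equally ``easy'' proof avoids $\Ext$ entirely: for $\Q\in\Spec\widehat R$ with $\Q\ne\widehat\m$ one has $\p:=\Q\cap R\ne\m$ (since $\m\widehat R$ is the unique maximal ideal of $\widehat R$) and $\widehat M_\Q\cong M_\p\otimes_{R_\p}\widehat R_\Q$ with $R_\p\to\widehat R_\Q$ a flat local homomorphism, so freeness of $M_\p$ implies that of $\widehat M_\Q$; conversely given $\p\ne\m$, choose $\Q$ lying over it (faithful flatness), note $\Q\ne\widehat\m$, and descend freeness of $\widehat M_\Q$ to $M_\p$ along the faithfully flat local map $R_\p\to\widehat R_\Q$. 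That version trades the bookkeeping about $\Ext$ for bookkeeping about the map $\Spec\widehat R\to\Spec R$; both are fine, and yours has the small advantage of staying entirely inside the framework of nonfree loci that the paper has already set up.
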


Now we can prove the following proposition.
This result says that finiteness and countability of the set of isomorphism classes of indecomposable Cohen-Macaulay modules that are free on the punctured spectrum ascends and descends between a Cohen-Macaulay local ring and its completion.

\begin{prop}\label{fcfc}
Let $R$ be a Cohen-Macaulay local ring.
Then the following are equivalent:
\begin{enumerate}[\rm (1)]
\item
There exist only finitely (respectively, countably) many isomorphism classes of indecomposable Cohen-Macaulay $R$-modules that are free on the punctured spectrum of $R$;
\item
There exist only finitely (respectively, countably) many isomorphism classes of indecomposable Cohen-Macaulay $\widehat R$-modules that are free on the punctured spectrum of $\widehat R$.
\end{enumerate}
\end{prop}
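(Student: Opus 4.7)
The proof will combine three ingredients: the Krull--Schmidt theorem for finitely generated modules over the complete local ring $\widehat{R}$; Lemma~\ref{asdes}, which ensures that freeness on the punctured spectrum passes in both directions under completion; and Corollary~\ref{pfhat}, which realizes every Cohen--Macaulay $\widehat{R}$-module that is free on the punctured spectrum as a direct summand of $\widehat{M}$ for some Cohen--Macaulay $R$-module $M$ free on the punctured spectrum.

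For $(1)\Rightarrow(2)$, I would enumerate the indecomposable Cohen--Macaulay $R$-modules free on the punctured spectrum as $\{M_\alpha\}_{\alpha\in A}$, with $A$ finite (respectively countable). Each $\widehat{M_\alpha}$ is a Cohen--Macaulay $\widehat{R}$-module, free on the punctured spectrum by Lemma~\ref{asdes}, and decomposes via Krull--Schmidt into finitely many indecomposables; let $\mathcal{N}$ denote the union over $\alpha$ of these indecomposable summands, a finite (resp.\ countable) family. The claim is then that every indecomposable Cohen--Macaulay $\widehat{R}$-module $N$ free on the punctured spectrum belongs to $\mathcal{N}$: by Corollary~\ref{pfhat}, such an $N$ is a direct summand of some $\widehat{M}$ with $M$ a Cohen--Macaulay $R$-module free on the punctured spectrum; writing $M=M_{\alpha_1}^{c_1}\oplus\cdots\oplus M_{\alpha_r}^{c_r}$ as a finite sum of indecomposables over $R$ and applying Krull--Schmidt over $\widehat{R}$ to $\widehat{M}=\bigoplus_i\widehat{M_{\alpha_i}}^{c_i}$ forces $N$ to be isomorphic to one of the indecomposable summands of some $\widehat{M_{\alpha_i}}$.

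For $(2)\Rightarrow(1)$, I would enumerate the indecomposable Cohen--Macaulay $\widehat{R}$-modules free on the punctured spectrum as $\{N_\beta\}_{\beta\in B}$, with $|B|$ finite or countable. For any indecomposable Cohen--Macaulay $R$-module $M$ free on the punctured spectrum, $\widehat{M}$ is Cohen--Macaulay and free on the punctured spectrum of $\widehat{R}$ (Lemma~\ref{asdes}), and Krull--Schmidt assigns to $[M]$ a unique finitely supported tuple $(a_\beta(M))_{\beta\in B}$ with $\widehat{M}\cong\bigoplus_\beta N_\beta^{a_\beta(M)}$. The hard part is to bound the number of isomorphism classes $[M]$ that can produce such data: non-isomorphic indecomposable $M$'s may have isomorphic completions, and even when $B$ is finite the set of finitely supported tuples in $\mathbb{N}^{B}$ is countably infinite. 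Both obstructions will be addressed by invoking R.~Wiegand's descent results (see~\cite{W}): the assignment $[M]\mapsto[\widehat{M}]$ has finite fibers on indecomposables, which combined with the countability of finitely supported tuples settles the countable case at once; the finite case additionally uses the uniform bound on the multiplicities $a_\beta(M)$ available for indecomposable $M$ when $\widehat{R}$ has only finitely many indecomposable Cohen--Macaulay modules free on the punctured spectrum. This last bound, which is the genuine obstacle, rests on the lifting/deformation techniques underlying the descent of finite Cohen--Macaulay type from $\widehat R$ to $R$.
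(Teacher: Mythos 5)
Your $(1)\Rightarrow(2)$ argument is essentially identical to the paper's: decompose each $\widehat{M_\alpha}$ by Krull--Schmidt, then use Corollary~\ref{pfhat} plus Lemma~\ref{asdes} plus uniqueness of decompositions over the complete (hence henselian) ring $\widehat{R}$ to show that every indecomposable CM $\widehat{R}$-module free on the punctured spectrum is a summand of some $\widehat{M_\alpha}$. That direction is correct and matches the paper.

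The $(2)\Rightarrow(1)$ direction, however, is not actually a proof; it is a description of where the difficulty lies together with a vague appeal to ``Wiegand's descent results.'' You posit two unverified claims: that $[M]\mapsto[\widehat{M}]$ has finite fibers on indecomposables, and that in the finite case the multiplicities $a_\beta(M)$ are uniformly bounded over all indecomposable $M$. Neither is a result you state precisely, cite precisely, or prove, and the second is genuinely not the route the paper takes. The paper's argument instead bypasses all questions about multiplicities: for each $i$ choose, by Corollary~\ref{pfhat}, a CM $R$-module $M_i$ free on the punctured spectrum with $N_i$ a summand of $\widehat{M_i}$, and set $M_i'=M_1\oplus\cdots\oplus M_i$. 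Wiegand's Theorem~1.1 in \cite{W} says $\add_R(M_i')$ contains only finitely many indecomposables. Now given any indecomposable CM $R$-module $M$ free on the punctured spectrum, $\widehat{M}\cong\bigoplus_{j\le m}N_j^{a_j}$ for some $m$, so $\widehat{M}\in\add_{\widehat R}(\widehat{M_m'})$; Wiegand's Lemma~1.2 then gives $M\in\add_R(M_m')$, placing $M$ among finitely many isomorphism classes (and, in the countable case, among a countable union of finite lists). No bound on the $a_j$ is ever needed. Your ``finite fibers'' claim is in fact a consequence of these two cited facts, but as written you have not derived it, and your proposed multiplicity bound is both unjustified and unnecessary. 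You should replace the second paragraph with the $\add$-membership argument just described, or at minimum state and cite the specific results of Wiegand (his Theorem~1.1 and Lemma~1.2) and show explicitly how they yield the conclusion.
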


This result is proved in a similar way to the proof of \cite[Theorem 1.4]{W}.

\begin{proof}
(1) $\Rightarrow$ (2): Let $M_1,M_2,\dots,M_n$ be all the nonisomorphic indecomposable Cohen-Macaulay $R$-modules that are free on the punctured spectrum of $R$.
(Here, we regard $n$ as $\infty$ in the countable case.)
Let $\widehat{M_i} \cong L_{i,1}\oplus\cdots\oplus L_{i,l_i}$ be an indecomposable decomposition of the $\widehat R$-module $\widehat{M_i}$ for each integer $i$ with $1\le i\le n$.
We claim that every indecomposable Cohen-Macaulay $\widehat R$-module $N$ that is free on the punctured spectrum of $\widehat R$ is isomorphic to some $L_{i,j}$.
Indeed, Corollary \ref{pfhat} guarantees that there exists a Cohen-Macaulay $R$-module $M$ which is free on the punctured spectrum of $R$ such that $N$ is isomorphic to a direct summand of $\widehat M$.
Since every (indecomposable) direct summand of $M$ is a Cohen-Macaulay $R$-module that is free on the punctured spectrum of $R$, we have an indecomposable decomposition $M\cong M_1^{\oplus a_1}\oplus\cdots\oplus M_m^{\oplus a_m}$ of the $R$-module $M$ for some integer $m$ with $1\le m\le n$.
Hence $N$ is isomorphic to a direct summand of $\widehat{M_1}^{\oplus a_1}\oplus\cdots\oplus\widehat{M_m}^{\oplus a_m}$.
Note that $\widehat R$ is a henselian local ring.
Since $N$ is an indecomposable $\widehat R$-module, by virtue of the Krull-Schmidt theorem $N$ is isomorphic to a direct summand of $\widehat M_k$ for some $1\le k\le m$, and is isomorphic to $L_{k,h}$ for some $1\le h\le l_k$.

(2) $\Rightarrow$ (1): Let $N_1,N_2,\dots,N_n$ be all the nonisomorphic indecomposable Cohen-Macaulay $\widehat R$-modules that are free on the punctured spectrum of $\widehat R$.
(Here, we regard $n$ as $\infty$ in the countable case.)
Corollary \ref{pfhat} shows that for each integer $i$ with $1\le i\le n$ there exists a Cohen-Macaulay $R$-module $M_i$ which is free on the punctured spectrum of $R$ such that $N_i$ is isomorphic to a direct summand of $\widehat{M_i}$.
Put $M_i'=M_1\oplus\cdots\oplus M_i$.
It follows from \cite[Theorem 1.1]{W} that there are only finitely many nonisomorphic indecomposable $R$-modules in $\add_R(M_i')$.
Let $L_{i,1},\dots,L_{i,l_i}$ be those $R$-modules.
Then we claim that every indecomposable Cohen-Macaulay $R$-module $M$ that is free on the punctured spectrum of $R$ is isomorphic to some $L_{i,j}$.
In fact, Lemma \ref{asdes} implies that $\widehat M$ is free on the punctured spectrum of $\widehat R$.
Hence for some integer $m$ with $1\le m\le n$ there is an isomorphism $\widehat M\cong N_1^{\oplus a_1}\oplus\cdots\oplus N_m^{\oplus a_m}$ of $\widehat R$-modules, which implies that $\widehat M$ belongs to $\add_{\widehat R}(\widehat{M_m'})$.
According to \cite[Lemma 1.2]{W}, the $R$-module $M$ belongs to $\add_R(M_m')$, and it follows that $M$ is isomorphic to $L_{m,k}$ for some $1\le k\le l_m$.
\end{proof}

A Cohen-Macaulay local ring $R$ is said to have {\em finite} (respectively, {\em countable}) {\em Cohen-Macaulay representation type} if there exist only finitely (respectively, countably) many isomorphism classes of indecomposable Cohen-Macaulay $R$-modules.
Now we recover the following theorem due to Leuschke and Wiegand \cite[Corollary 1.6]{W}\cite[Main Theorem]{LW}, which was conjectured by Schreyer \cite[Conjecture 7.3]{S}.
In fact, Propositions \ref{fcfc}, \ref{kisoteki} and \cite[Corollary 2]{HL2} imply this corollary.

\begin{cor}\label{wlw}
Let $R$ be a Cohen-Macaulay local ring.
\begin{enumerate}[\rm (1)]
\item
If the completion $\widehat R$ has finite Cohen-Macaulay representation type, then so does $R$.
\item
The converse of the first assertion holds if either $\widehat R$ has an isolated singularity or $R$ is excellent.
\end{enumerate}
\end{cor}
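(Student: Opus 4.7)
The plan is to deduce this corollary by combining Proposition \ref{fcfc}, Proposition \ref{kisoteki}, and the Huneke--Leuschke theorem \cite[Corollary 2]{HL2} that every Cohen--Macaulay local ring of finite Cohen--Macaulay representation type has an isolated singularity. The bridge between these three results is the elementary observation that over a Cohen--Macaulay local ring with isolated singularity, every Cohen--Macaulay module is free on the punctured spectrum (because over a regular local ring a maximal Cohen--Macaulay module is free, by Auslander--Buchsbaum). With this in hand, the classes of ``indecomposable Cohen--Macaulay modules'' and ``indecomposable Cohen--Macaulay modules free on the punctured spectrum'' coincide, so Proposition \ref{fcfc} can be leveraged to transport finiteness unrestrictedly.

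For part (1), I would assume that $\widehat R$ has finite Cohen--Macaulay representation type. Applying \cite[Corollary 2]{HL2} to $\widehat R$, I obtain that $\widehat R$ has an isolated singularity, and then Proposition \ref{kisoteki} transfers this to $R$. Consequently every indecomposable Cohen--Macaulay $R$-module is free on the punctured spectrum of $R$. The finite Cohen--Macaulay representation type of $\widehat R$ says in particular that there are only finitely many indecomposable Cohen--Macaulay $\widehat R$-modules free on the punctured spectrum of $\widehat R$, so the finite case of Proposition \ref{fcfc} gives only finitely many indecomposable Cohen--Macaulay $R$-modules free on the punctured spectrum of $R$. Since this accounts for all indecomposable Cohen--Macaulay $R$-modules, $R$ has finite Cohen--Macaulay representation type.

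For part (2), I would assume $R$ has finite Cohen--Macaulay representation type and show under either stated hypothesis that $\widehat R$ has an isolated singularity. If that is assumed outright, there is nothing to prove; if instead $R$ is excellent, \cite[Corollary 2]{HL2} forces $R$ to have an isolated singularity, and the excellent case of Proposition \ref{kisoteki} then forces the same on $\widehat R$. Hence every indecomposable Cohen--Macaulay $\widehat R$-module is free on the punctured spectrum of $\widehat R$. Now the finite representation type of $R$ gives only finitely many indecomposable Cohen--Macaulay $R$-modules free on the punctured spectrum, and Proposition \ref{fcfc} yields the corresponding finiteness for $\widehat R$, which is all indecomposable Cohen--Macaulay $\widehat R$-modules.

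The argument is a clean three-ingredient combination rather than a calculation, so there is no genuinely hard step; the only subtlety is being vigilant that Proposition \ref{fcfc} only transports finiteness \emph{within the subclass} of modules free on the punctured spectrum, which is why the Huneke--Leuschke theorem and Proposition \ref{kisoteki} must first be invoked to ensure this subclass equals the full class of indecomposable Cohen--Macaulay modules on the appropriate side.
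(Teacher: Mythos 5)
Your proof is correct and is exactly the argument the paper has in mind: the paper's ``proof'' of Corollary~\ref{wlw} is the single sentence preceding it, asserting that Propositions~\ref{fcfc}, \ref{kisoteki}, and \cite[Corollary~2]{HL2} imply the corollary, which is precisely the three ingredients you combine. You also correctly flag the key point that Proposition~\ref{fcfc} transports finiteness only within the class of modules free on the punctured spectrum, so that the Huneke--Leuschke isolated-singularity theorem and Proposition~\ref{kisoteki} are genuinely needed to make this class coincide with all indecomposable Cohen--Macaulay modules on the relevant side.
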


Using Propositions \ref{fcfc} and \ref{kisoteki}, we can also obtain the following result.

\begin{cor}\label{wlwc}
Let $R$ be a Cohen-Macaulay local ring whose completion $\widehat R$ has an isolated singularity (e.g. a Cohen-Macaulay excellent local ring with an isolated singularity).
Then $R$ has countable Cohen-Macaulay representation type if and only if so does $\widehat R$.
\end{cor}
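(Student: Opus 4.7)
The plan is to reduce the statement directly to Proposition \ref{fcfc} by using the isolated singularity hypothesis to remove the qualifier ``free on the punctured spectrum.''

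First I would observe that since $\widehat R$ has an isolated singularity, Proposition \ref{kisoteki} implies that $R$ also has an isolated singularity. In particular, for every nonmaximal prime $\p$ of $R$, the ring $R_\p$ is regular, so every finitely generated $R_\p$-module has finite projective dimension. Because a Cohen-Macaulay $R$-module $M$ localizes to a Cohen-Macaulay $R_\p$-module whose depth equals $\dim R_\p$ (as $\dim R_\p \le d$ and $\depth M_\p \ge \depth M - d + \dim R_\p$ via standard localization, or more directly since $M_\p$ is maximal Cohen-Macaulay over the Cohen-Macaulay local ring $R_\p$), the Auslander--Buchsbaum formula forces $M_\p$ to be free. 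Hence every Cohen-Macaulay $R$-module is automatically free on the punctured spectrum of $R$, and the same argument applied to $\widehat R$ shows that every Cohen-Macaulay $\widehat R$-module is free on the punctured spectrum of $\widehat R$.

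Consequently, the set of isomorphism classes of indecomposable Cohen-Macaulay $R$-modules coincides with the set of isomorphism classes of indecomposable Cohen-Macaulay $R$-modules that are free on the punctured spectrum of $R$, and likewise over $\widehat R$. Therefore $R$ has countable Cohen-Macaulay representation type if and only if there exist only countably many isomorphism classes of indecomposable Cohen-Macaulay $R$-modules free on the punctured spectrum of $R$, and similarly for $\widehat R$.

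The equivalence of these two countability conditions is precisely the content of Proposition \ref{fcfc} (in the countable case), which finishes the proof. The only step requiring any real argument is the observation that an isolated singularity forces Cohen-Macaulay modules to be free on the punctured spectrum, and this is a direct Auslander--Buchsbaum computation; the rest is an immediate appeal to Propositions \ref{kisoteki} and \ref{fcfc}. I do not anticipate any serious obstacle here, since all the technical work has already been carried out in Proposition \ref{fcfc} (whose proof invokes Corollary \ref{pfhat}, the henselian Krull--Schmidt theorem, and Wiegand's finiteness results).
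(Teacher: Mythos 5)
Your proposal is correct and follows exactly the route the paper intends: the paper gives no detailed proof, simply stating that Corollary \ref{wlwc} follows from Propositions \ref{fcfc} and \ref{kisoteki}, and the missing glue is precisely your observation that an isolated singularity makes every Cohen--Macaulay module free on the punctured spectrum (via Auslander--Buchsbaum over the regular localizations), so the ``free on the punctured spectrum'' qualifier in Proposition \ref{fcfc} becomes vacuous for both $R$ and $\widehat R$.
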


\section{Thick subcategories of Cohen-Macaulay modules}\label{tsocm}

In this section, we consider classifying thick subcategories of $\CM(R)$ in terms of specialization-closed subsets of $\Spec R$.
We begin with a lemma.

\begin{lem}\label{zurasi}
Let $0 \to L \to M \to N \to 0$ be an exact sequence of $R$-modules.
Then there exists an exact sequence $0 \to \Omega N \to L\oplus F \to M \to 0$ of $R$-modules, where $F$ is free.
\end{lem}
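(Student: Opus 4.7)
The plan is to use a standard pullback (``horseshoe'') construction. First I would choose a minimal free cover $F \twoheadrightarrow N$, so that by definition of the syzygy there is a short exact sequence
\[
0 \to \Omega N \to F \to N \to 0.
\]
Next I would form the pullback $P := M \times_N F$ along the given surjection $M \twoheadrightarrow N$ and the chosen surjection $F \twoheadrightarrow N$.

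The pullback fits into a commutative diagram with exact rows and columns from which one reads off two short exact sequences: $0 \to L \to P \to F \to 0$ (whose kernel equals $\ker(M \to N) = L$) and $0 \to \Omega N \to P \to M \to 0$ (whose kernel equals $\ker(F \to N) = \Omega N$). The first of these splits because $F$ is free (hence projective), giving an isomorphism $P \cong L \oplus F$. Substituting this into the second sequence yields the desired exact sequence $0 \to \Omega N \to L \oplus F \to M \to 0$.

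There is no real obstacle here; the only thing to be careful about is confirming that the pullback description of the kernels is correct and that projectivity of $F$ really does split the first row. Everything else is formal diagram chasing, so a short proof should suffice.
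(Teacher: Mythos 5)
Your proof is correct and follows exactly the approach the paper indicates: take the short exact sequence $0 \to \Omega N \to F \to N \to 0$ from the minimal free cover and form the pullback of $M \twoheadrightarrow N$ and $F \twoheadrightarrow N$, then use projectivity of $F$ to split the induced sequence $0 \to L \to P \to F \to 0$. This is precisely the argument the paper sketches in one line, so there is nothing to add.
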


This lemma is easily proved by taking an exact sequence $0 \to \Omega N \to F \to N \to 0$ with $F$ free and making a pullback diagram.

The following result is easy to verify by using the fact that $\CM(R)$ is a resolving subcategory of $\mod R$ (cf. Example \ref{resex}(3)).

\begin{prop}\label{thres1}
Let $R$ be a Cohen-Macaulay local ring.
If $\X$ is a thick subcategory of $\CM(R)$ containing $R$, then $\X$ is a resolving subcategory of $\mod R$ contained in $\CM(R)$
\end{prop}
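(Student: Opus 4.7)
The plan is to verify the four defining properties of a resolving subcategory (contains $R$, closed under direct summands, extensions, and syzygies), and then invoke Proposition~\ref{resdef}. Three of the four are essentially immediate from the hypotheses; the real content is that any short exact sequence or syzygy sequence we care about stays inside $\CM(R)$, so that the thick condition on $\X$ can be applied.

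First, $R\in\X$ by hypothesis, and $\X$ is closed under direct summands by the definition of a thick subcategory of $\CM(R)$. Next I would show closure under extensions. Let $0\to L\to M\to N\to 0$ be exact in $\mod R$ with $L,N\in\X$. Since $\X\subseteq\CM(R)$, both $L$ and $N$ are Cohen-Macaulay; because $\CM(R)$ is itself resolving (Example~\ref{resex}(3)), and in particular closed under extensions, $M$ is Cohen-Macaulay as well. Hence the sequence is an exact sequence in $\CM(R)$, and the thick property (two out of three) forces $M\in\X$.

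For syzygies, I would use Proposition~\ref{resdef}, which lets us replace closure under kernels of epimorphisms by closure under syzygies. Given $M\in\X$, take a minimal free cover $0\to\Omega M\to F\to M\to 0$. Since $R\in\X$ and $\X$ is closed under extensions (by the previous paragraph), an easy induction on the rank of $F$ via the split sequences $0\to R\to R^{n+1}\to R^n\to 0$ gives $F\in\X$. Moreover, as $M$ and $F$ are Cohen-Macaulay, the depth lemma applied to $0\to\Omega M\to F\to M\to 0$ yields $\depth\Omega M\ge d$, so $\Omega M\in\CM(R)$ and the displayed sequence is an exact sequence in $\CM(R)$ with $F,M\in\X$. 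The thick property again gives $\Omega M\in\X$, and iteration yields $\Omega^iM\in\X$ for every $i\ge 0$.

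The containment $\X\subseteq\CM(R)$ is part of the setup, so together these observations show $\X$ is a resolving subcategory of $\mod R$ contained in $\CM(R)$. I do not expect a real obstacle here: the only subtlety is remembering to use that $\CM(R)$ is a resolving subcategory of $\mod R$, which is what guarantees that the relevant middle and kernel terms land in $\CM(R)$ so that the thick hypothesis on $\X$ (phrased for sequences within $\CM(R)$) can be applied.
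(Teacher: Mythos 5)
Your proof is correct and takes precisely the approach the paper intends: the paper dismisses this as ``easy to verify by using the fact that $\CM(R)$ is a resolving subcategory of $\mod R$,'' and you have supplied exactly those details, correctly identifying the one subtle point, namely that the middle term of an extension and the syzygy $\Omega M$ both land in $\CM(R)$ so that the thick hypothesis (phrased only for exact sequences in $\CM(R)$) can be invoked. The passage from closure under syzygies to the resolving condition via Proposition~\ref{resdef} is also exactly what the paper has in mind.
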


For a subcategory $\X$ of $\mod R$, we denote by $\widetilde\X$ the subcategory of $\mod R$ consisting of all modules $M$ such that there exists an exact sequence
$$
0 \to X_n \to X_{n-1} \to \cdots \to X_1 \to X_0 \to M \to 0
$$
with $X_i\in\X$ for $0\le i\le n$.
To investigate the structure of $\widetilde\X$ for a thick subcategory $\X$ of $\CM(R)$, we make the following lemma.

\begin{lem}\label{resome}
Let $\X$ be a resolving subcategory of $\mod R$.
Let $0 \to X_n \to X_{n-1} \to \cdots \to X_1 \to X_0 \to M \to 0$ be an exact sequence in $\mod R$ with $X_i\in\X$ for $0\le i\le n$.
Then $\Omega^nM$ belongs to $\X$.
\end{lem}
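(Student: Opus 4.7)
My strategy is to proceed by induction on $n$, using Lemma \ref{zurasi} at the inductive step to relate $\Omega M$ to the kernel of $X_0 \to M$, and then invoking a standard horseshoe-type shift for syzygies of a short exact sequence.

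The base case $n=0$ is immediate: $M \cong X_0 \in \X$, so $\Omega^0 M = M \in \X$. For the inductive step, assume $n \ge 1$ and set $K = \Im(X_1 \to X_0) = \ker(X_0 \to M)$. This yields a short exact sequence $0 \to K \to X_0 \to M \to 0$ together with a shorter exact sequence $0 \to X_n \to X_{n-1} \to \cdots \to X_1 \to K \to 0$ whose intermediate modules lie in $\X$ and which has length $n-1$. The inductive hypothesis applied to this shorter resolution gives $\Omega^{n-1} K \in \X$.

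Next, I apply Lemma \ref{zurasi} to the short exact sequence above to obtain an exact sequence
$$0 \to \Omega M \to K \oplus F \to X_0 \to 0$$
with $F$ free. To pass to the $n$-th syzygy, I invoke the following standard shifting fact: for any short exact sequence $0 \to A \to B \to C \to 0$ and any $k \ge 0$, there is an exact sequence $0 \to \Omega^k A \to \Omega^k B \oplus G \to \Omega^k C \to 0$ with $G$ free. This follows by one application of the horseshoe lemma using free covers---the resulting middle kernel equals $\Omega^k B$ plus a free summand, because projectives over a local ring are free, so any surjection from a free module onto $B$ has kernel isomorphic to $\Omega B$ plus a free module---followed by iteration on $k$. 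Applying this with $k = n-1$ to the displayed sequence yields
$$0 \to \Omega^n M \to \Omega^{n-1}(K \oplus F) \oplus G \to \Omega^{n-1} X_0 \to 0.$$

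Finally, I conclude as follows. In both the cases $n = 1$ and $n \ge 2$, the middle term $\Omega^{n-1}(K \oplus F) \oplus G$ is of the form $\Omega^{n-1} K \oplus H$ with $H$ free, because $\Omega^{n-1} F$ is free when $n = 1$ and zero when $n \ge 2$. The inductive hypothesis gives $\Omega^{n-1} K \in \X$, and free modules lie in $\X$ since $R \in \X$ and $\X$ is closed under extensions, so the middle term belongs to $\X$. The right term $\Omega^{n-1} X_0$ lies in $\X$ by closure under syzygies (Proposition \ref{resdef}) since $X_0 \in \X$. Closure of $\X$ under kernels of epimorphisms then yields $\Omega^n M \in \X$, completing the induction. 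The only nonroutine step is the syzygy-shifting fact, and its proof reduces to the observation recorded above about free surjections onto a local-ring module.
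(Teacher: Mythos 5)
Your proof is correct and follows essentially the same route as the paper's: induction on $n$, passing to the kernel of $X_0 \to M$, applying Lemma \ref{zurasi}, and shifting syzygies to obtain the exact sequence $0 \to \Omega^n M \to \Omega^{n-1}K \oplus H \to \Omega^{n-1}X_0 \to 0$. The only difference is that you spell out the horseshoe-lemma shifting step (and the distinction between $\Omega^{n-1}F$ being free when $n=1$ and zero when $n\ge 2$), which the paper compresses into the phrase ``From this we obtain an exact sequence.''
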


\begin{proof}
Let us prove the lemma by induction on $n$.
The assertion obviously holds in the case where $n=0$.
Let $n\ge 1$.
Let $N$ be the kernel of the map $X_0\to M$ in the exact sequence.
Then there is an exact sequence $0 \to X_n \to X_{n-1} \to \cdots \to X_2 \to X_1 \to N \to 0$, and the induction hypothesis says that $\Omega^{n-1}N$ belongs to $\X$.
Applying Lemma \ref{zurasi} to the exact sequence $0 \to N \to X_0 \to M \to 0$, we get an exact sequence $0 \to \Omega M \to N\oplus F \to X_0 \to 0$, where $F$ is a free $R$-module.
From this we obtain an exact sequence $0 \to \Omega^nM \to \Omega^{n-1} N\oplus G \to \Omega^{n-1}X_0 \to 0$ of $R$-modules such that $G$ is free.
Since $\Omega^{n-1}X_0$ and $\Omega^{n-1}M\oplus G$ belong to $\X$, so does $\Omega^nM$, as required.
\end{proof}

Here we recall the definition of a Cohen-Macaulay approximation.

\begin{dfn}
Let $R$ be a Cohen-Macaulay local ring with a canonical module.
Let $M$ be an $R$-module.
Let $0 \to I \to C \to M \to 0$ be an exact sequence of $R$-modules such that $C$ is Cohen-Macaulay and that $I$ has finite injective dimension.
Such a Cohen-Macaulay module $C$ is called a {\em Cohen-Macaulay approximation} of $M$.
For every $R$-module $M$, a Cohen-Macaulay approximation of $M$ exists.
It is not uniquely determined in general, but if $R$ is henselian, then it is essentially uniquely determined.
For the details of the notion of a Cohen-Macaulay approximation, see \cite{ABu}.
\end{dfn}

Let $\X$ be a thick subcategory of $\CM(R)$ containing $R$ and the canonical module of $R$.
Then we have several equivalent conditions for a given $R$-module to be in the subcategory $\widetilde\X$ of $\mod R$.

\begin{prop}\label{7}
Let $R$ be a Cohen-Macaulay local ring with a canonical module $\omega$.
Let $\X$ be a thick subcategory of $\CM(R)$ containing $R$ and $\omega$.
Put $n=d-\depth M$.
Then the following are equivalent:
\begin{enumerate}[\rm (1)]
\item
$M$ is in $\widetilde\X$;
\item
$\Omega^iM$ is in $\X$ for some $i\ge 0$;
\item
$\Omega^dM$ is in $\X$;
\item
$\Omega^nM$ is in $\X$;
\item
$\Omega^iM$ is in $\X$ for all $i\ge n$;
\item
Every Cohen-Macaulay approximation of $M$ is in $\X$;
\item
Some Cohen-Macaulay approximation of $M$ is in $\X$.
\end{enumerate}
\end{prop}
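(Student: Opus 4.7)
The plan is to first note that by Proposition \ref{thres1}, the hypothesis on $\X$ forces $\X$ to be a resolving subcategory of $\mod R$ contained in $\CM(R)$, so in particular $\X$ is closed under syzygies and contains $\add(\omega)$ since $\omega\in\X$. I will then establish the equivalences in two blocks: $(1)\Leftrightarrow(2)\Leftrightarrow(3)\Leftrightarrow(4)\Leftrightarrow(5)$, and $(3)\Leftrightarrow(6)\Leftrightarrow(7)$.

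For the first block, the implications $(5)\Rightarrow(4)$, $(5)\Rightarrow(3)\Rightarrow(2)$ are trivial (using $d\ge n$); $(4)\Rightarrow(5)$ follows from syzygy closure of $\X$; $(4)\Rightarrow(1)$ is immediate from the truncated minimal free resolution $0\to\Omega^nM\to F_{n-1}\to\cdots\to F_0\to M\to 0$ that witnesses $M\in\widetilde\X$; and $(1)\Rightarrow(2)$ is a direct application of Lemma \ref{resome}. The only nontrivial step is $(2)\Rightarrow(4)$: if $\Omega^iM\in\X$, then $\Omega^iM\in\CM(R)$ forces $i\ge n$ by the depth formula $\depth\Omega^jM=\min\{d,\,d-n+j\}$. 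I then descend from $i$ down to $n$ using the short exact sequences $0\to\Omega^{j+1}M\to F_j\to\Omega^jM\to 0$, which lie in $\CM(R)$ exactly when $j\ge n$, together with thickness of $\X$ inside $\CM(R)$.

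For the second block, the essential input is the Auslander--Buchweitz theorem \cite{ABu} characterizing finitely generated modules of finite injective dimension as exactly those admitting a finite left resolution by modules in $\add(\omega)$. Combining this with Lemma \ref{resome} applied to $\res(\omega)$, and then descending through short exact sequences $0\to\Omega^{j+1}I\to F_j\to\Omega^jI\to 0$ (which lie in $\CM(R)$ for $j\ge d$) using thickness, I obtain $\Omega^dI\in\X$ for every module $I$ with $\id I<\infty$. Given any Cohen--Macaulay approximation $0\to I\to C\to M\to 0$, taking $d$-th syzygies produces a short exact sequence
\[
0\to\Omega^dI\to\Omega^dC\oplus G\to\Omega^dM\to 0
\]
in $\CM(R)$ (with $G$ free), in which $\Omega^dI\in\X$ always. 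For $(7)\Rightarrow(3)$: if $C\in\X$ then $\Omega^dC\in\X$ by syzygy closure, and thickness then gives $\Omega^dM\in\X$. For $(3)\Rightarrow(6)$: assuming $\Omega^dM\in\X$, thickness yields $\Omega^dC\oplus G\in\X$ hence $\Omega^dC\in\X$, and a further descent along $0\to\Omega^{j+1}C\to F_j\to\Omega^jC\to 0$ in $\CM(R)$ produces $C\in\X$. Finally $(6)\Rightarrow(7)$ is trivial.

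The main obstacle is the Auslander--Buchweitz characterization of modules of finite injective dimension by finite $\omega$-resolutions; this is what bridges the approximation-theoretic conditions $(6),(7)$ with the syzygy conditions $(2)$--$(5)$. Everything else is a disciplined application of syzygy closure and thickness, with care to verify that every short exact sequence invoked for thickness genuinely lies in $\CM(R)$.
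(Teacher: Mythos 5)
Your proposal is correct and follows essentially the same approach as the paper: both rely on Proposition \ref{thres1} to get the resolving property, Lemma \ref{resome} for $(1)\Rightarrow(2)$, the depth computation plus thickness descent for $(2)\Rightarrow(4)$, and the finite $\add(\omega)$-resolution of a finite-injective-dimension module $I$ together with the $d$-th syzygy sequence $0\to\Omega^dI\to\Omega^dC\oplus G\to\Omega^dM\to 0$ to handle $(6)$ and $(7)$. The only cosmetic difference is that the paper closes the loop via $(7)\Rightarrow(1)$ by a direct splice, and obtains $\Omega^dI\in\X$ by citing the already-proved equivalence $(1)\Leftrightarrow(3)$ applied to $I$, whereas you re-derive that fact by hand and route through $(7)\Rightarrow(3)$; the substance is identical.
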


\begin{proof}
First of all, note from Proposition \ref{thres1} that the subcategory $\X$ of $\mod R$ is resolving, and so it is closed under syzygies.

(5) $\Leftrightarrow$ (4): These implications are trivial.

(4) $\Rightarrow$ (3): The module $\Omega^dM$ is the $(d-n)^\text{th}$ syzygy of $\Omega^nM$.
Hence $\Omega^dM$ belongs to $\X$.

(3) $\Rightarrow$ (2): This implication is trivial.

(2) $\Rightarrow$ (4): Since $\X$ is a subcategory of $\CM(R)$, the $R$-module $\Omega^iM$ is Cohen-Macaulay.
Hence we have $d=\depth\Omega^iM=\inf\{\depth M+i,d\}$, which implies $\depth M+i\ge d$, i.e., $i\ge n$.
Therefore there is an exact sequence
$$
0 \to \Omega^iM \overset{f_i}{\to} F_{i-1} \overset{f_{i-1}}{\to} \cdots \overset{f_{n+2}}{\to} F_{n+1} \overset{f_{n+1}}{\to} F_n \overset{f_n}{\to} \Omega^nM \to 0
$$
of $R$-modules such that each $F_j$ is free.
Letting $N_j$ be the image of $f_j$, we have a short exact sequence $0 \to N_{j+1} \to F_j \to N_j \to 0$ for each $n\le j\le i-1$.
Note that all modules appearing in this short exact sequence are Cohen-Macaulay.
Since $\X$ is a thick subcategory of $\CM(R)$, we see by descending induction on $j$ that $\Omega^nM$ belongs to $\X$.

(1) $\Rightarrow$ (2): Lemma \ref{resome} shows this implication.

(2) $\Rightarrow$ (1): There is an exact sequence $0 \to \Omega^iM \to F_{i-1} \to \cdots \to F_1 \to F_0 \to M \to 0$ such that $F_0,F_1,\dots,F_{i-1}$ are free.
Since $F_0,F_1,\dots,F_{i-1},\Omega^iM$ are all in $\X$, the module $M$ is in $\widetilde\X$.

The combination of all the above arguments proves that the conditions (1)--(5) are equivalent to one another.

(6) $\Rightarrow$ (7): This implication is clear.

(7) $\Rightarrow$ (1) and (3) $\Rightarrow$ (6): Let
\begin{equation}\label{icm}
0 \to I \to C \to M \to 0
\end{equation}
be an exact sequence of $R$-modules such that $C$ is Cohen-Macaulay and that $I$ has finite injective dimension.
According to \cite[Exercise 3.3.28(b)]{BH}, there is an exact sequence
\begin{equation}\label{oi}
0 \to \omega_m \to \cdots \to \omega_1 \to \omega_0 \to I \to 0
\end{equation}
with $\omega_j\in\add\omega$ for $0\le j\le m$.
Splicing \eqref{icm} and \eqref{oi} together, we obtain an exact sequence $0 \to \omega_m \to \cdots \to \omega_1 \to \omega_0 \to C \to M \to 0$.
Each $\omega_j$ is in $\X$ since $\X$ contains $\omega$.
Hence, if $C$ belongs to $\X$, then $M$ belongs to $\widetilde\X$.
Thus (7) implies (1).

Now suppose that the condition (3) holds.
From \eqref{icm} we get an exact sequence $0 \to \Omega^d I \to \Omega^d C\oplus F \to \Omega^d M \to 0$ of Cohen-Macaulay $R$-modules with $F$ free.
It follows from the exact sequence \eqref{oi} that $I$ is in $\widetilde\X$.
By the equivalence (1) $\Leftrightarrow$ (3), the module $\Omega^dI$ is in $\X$.
As $\Omega^dM$ is in $\X$, the module $\Omega^dC$ is also in $\X$.
There is an exact sequence $0 \to \Omega^dC \to P_{d-1} \to \cdots \to P_1 \to P_0 \to C \to 0$ where each $P_j$ is free.
Note that all the modules appearing in this exact sequence are Cohen-Macaulay $R$-modules.
Similarly to the proof of the implication (2) $\Rightarrow$ (4), decomposing the above exact sequence into short exact sequences shows that the module $C$ belongs to $\X$.
Thus (3) implies (6).

As a consequence, the conditions (1)--(7) are equivalent to one another.
\end{proof}

To consider classifying thick subcategories of Cohen-Macaulay modules, it is necessary to investigate the structure of the additive closure of a localization of a resolving subcategory of $\mod R$.

\begin{lem}\label{nxm}
Let $\X$ be a resolving subcategory of $\mod R$, and let $\p$ be a prime ideal of $R$.
Suppose that an $R$-module $M$ is such that $M_\p$ belongs to $\add_{R_\p}\X_\p$.
Then there exists an exact sequence $0 \to N \to X \to M \to 0$ in $\mod R$ with $X\in\X$ such that the localized exact sequence $0 \to N_\p \to X_\p \to M_\p \to 0$ splits.
\end{lem}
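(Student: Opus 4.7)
The plan is to produce a short exact sequence $0\to N\to Y\to M\to 0$ in $\mod R$ with $Y\in\X$ whose localization at $\p$ splits, carried out in three steps: (i) pick a single module in $\X$ whose localization admits $M_\p$ as a direct summand, (ii) lift the local projection to a global $R$-homomorphism, and (iii) compensate for the possible failure of surjectivity by tacking on a finite free cover of $M$.

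For step (i), the resolving subcategory $\X$ is closed under finite direct sums (since it contains $R$ and is closed under extensions), so the hypothesis $M_\p\in\add_{R_\p}\X_\p$ lets me choose a single module $X\in\X$ together with a split surjection $\pi\colon X_\p\twoheadrightarrow M_\p$ and a section $\sigma\colon M_\p\to X_\p$. For step (ii), since $X$ is finitely generated over the noetherian ring $R$, the canonical map $\Hom_R(X,M)_\p\to\Hom_{R_\p}(X_\p,M_\p)$ is an isomorphism, so $\pi$ is represented by $f/s$ for some $f\in\Hom_R(X,M)$ and some $s\in R\setminus\p$; equivalently $f_\p=s\pi$ as $R_\p$-homomorphisms $X_\p\to M_\p$. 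Because $s$ is a unit in $R_\p$, the rescaled map $s^{-1}\sigma\colon M_\p\to X_\p$ satisfies $f_\p\circ(s^{-1}\sigma)=\id_{M_\p}$, so $f_\p$ is already a split surjection, even though $f$ itself need not be surjective over $R$.

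For step (iii), choose any surjection $\phi\colon R^m\twoheadrightarrow M$ from a finite free module and form $\psi=(f,\phi)\colon Y\to M$ with $Y=X\oplus R^m$. Then $Y$ lies in $\X$ (since $\X$ contains $R$ and is closed under direct sums), $\psi$ is surjective because $\phi$ already is, and the map $(s^{-1}\sigma,0)\colon M_\p\to Y_\p$ is a section of $\psi_\p$. Setting $N=\Ker\psi$ yields an exact sequence $0\to N\to Y\to M\to 0$ of the required form, and its localization at $\p$ splits by construction.

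The main subtle point is the passage from the purely local splitting data $(\pi,\sigma)$ to a global homomorphism retaining this splitting: the local projection $\pi$ is only captured up to an equivalence class $f/s$ under $\Hom$-localization, and the resulting $f$ need not be surjective. Both difficulties dissolve once one notes that the factor $s$ becomes invertible after localizing so the local section survives rescaling, and that prepending a free cover of $M$ restores surjectivity globally without disturbing the section already present on the $X$-component.
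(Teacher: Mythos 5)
Your proof is correct and follows essentially the same route as the paper's: use the $\Hom$-localization isomorphism $\Hom_R(X,M)_\p\cong\Hom_{R_\p}(X_\p,M_\p)$ to lift a local split epimorphism to a global homomorphism, then adjoin a free cover of $M$ to achieve global surjectivity without disturbing the local section. The one streamlining is that you lift the split surjection $\pi\colon X_\p\to M_\p$ directly, whereas the paper first realizes the complement $L$ as $K_\p$ for some $K\in\mod R$, lifts the resulting isomorphism $Y_\p\to(M\oplus K)_\p$ to a global $f$, and only then extracts its first component as the split epimorphism.
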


\begin{proof}
There is an $R_\p$-isomorphism $M_\p\oplus L\cong Y_\p$ for some $L\in\mod R_\p$ and $Y\in\X$.
We have an isomorphism $L\cong K_\p$ for some $K\in\mod R$, and get $(M\oplus K)_\p\cong Y_\p$.
Hence there exists an isomorphism $Y_\p\to (M\oplus K)_\p$ of $R_\p$-modules.
Since $\Hom_{R_\p}(Y_\p,(M\oplus K)_\p)\cong\Hom_R(Y,M\oplus K)_\p$, there is a homomorphism $f:Y\to M\oplus K$ of $R$-modules such that $f_\p$ is an isomorphism.
Write $f=\binom{g}{h}$ for some $g\in\Hom_R(Y,M)$ and $h\in\Hom_R(Y,K)$.
Then $\binom{g_\p}{h_\p}$ is an isomorphism, and it is easy to see that $g_\p$ is a split epimorphism.
There exists an $R$-homomorphism $\rho:F\to M$ with $F$ free such that the $R$-homomorphism $(g,\rho):Y\oplus F\to M$ is surjective.
Taking its kernel and setting $X=Y\oplus F$, we obtain an exact sequence $0 \to N \to X \overset{(g,\rho)}{\longrightarrow} M \to 0$ of $R$-modules with $X\in\X$.
It is easily seen that the localization of this exact sequence at $\p$ splits.
Thus the proof of the lemma is completed.
\end{proof}

Applying the above lemma, we can prove the following proposition.
It will play a key role in the proof of (the essential part of) the main result of this section.

\begin{prop}\label{lnx}
Let $R$ be a Cohen-Macaulay local ring.
Let $\X$ be a resolving subcategory of $\mod R$ contained in $\CM(R)$, and let $M$ be a Cohen-Macaulay $R$-module.
Let $\Phi$ be a nonempty finite subset of $\Spec R$.
Assume $M_\p$ is in $\add_{R_\p}\X_\p$ for every $\p\in\Phi$.
Then there exists an exact sequence $0 \to L \to N \to X \to 0$ of Cohen-Macaulay $R$-modules such that $X\in\X$, that $M$ is a direct summand of $N$, that $\V_R(L)\subseteq\V_R(M)$ and that $\V_R(L)\cap\Phi=\emptyset$.
\end{prop}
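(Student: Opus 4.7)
The plan is to reduce to a single short exact sequence that splits simultaneously at every $\p \in \Phi$, then use a prime-avoidance/pullback argument to replace $M$ on the right by $M/uM$ for a suitable element $u$, and finally invoke Lemma \ref{zurasi} to move $M$ into the middle position as a direct summand. First, for each $\p \in \Phi$ I would apply Lemma \ref{nxm} to $M$ to obtain an exact sequence $0 \to L_\p \to X_\p \to M \to 0$ with $X_\p \in \X$ whose localization at $\p$ splits. Setting $X := \bigoplus_{\p \in \Phi} X_\p \in \X$ and letting $f : X \to M$ be the sum of the given surjections, the combined exact sequence $0 \to K \to X \to M \to 0$ splits after localizing at every $\p \in \Phi$: a local section $M_\p \to X_\p$ is built by placing the $\p$-th local section into the $\p$-th summand and zero into the others.

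Let $\xi \in \Ext_R^1(M, K)$ denote the class of this sequence; the splittings give $\xi_\p = 0$ for every $\p \in \Phi$, equivalently $\Ann_R(\xi) \not\subset \p$ for such $\p$. Since $M$ is maximal Cohen-Macaulay, $\Ass M$ consists of minimal primes of $R$; combining this with prime avoidance, I would pick $u \in \Ann_R(\xi)$ that lies outside every prime of $\Phi$ and outside every prime of $\Ass M$, so that $u$ is $M$-regular. Because $u\xi = 0$, the pullback of the sequence along $u \cdot \id_M$ has split top row, and the snake lemma together with $M$-regularity forcing $M[u] = 0$ yields the exact sequence
\[
0 \to K \oplus M \to X \to M/uM \to 0.
\]

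Applying Lemma \ref{zurasi} to this sequence produces $0 \to \Omega(M/uM) \to K \oplus M \oplus F \to X \to 0$ for some free $R$-module $F$. Setting $L := \Omega(M/uM)$ and $N := K \oplus M \oplus F$ immediately gives $M$ as a direct summand of $N$ and $X \in \X$; all three terms are Cohen-Macaulay because $u$ being $M$-regular makes $M/uM$ of depth $d-1$, whence its first syzygy is maximal Cohen-Macaulay. A direct case analysis on whether $u \in \p$ shows that $L_\p$ is $R_\p$-free if and only if $u \notin \p$ or $M_\p$ is $R_\p$-free; hence $\V(L) = \v(u) \cap \V(M) \subseteq \V(M)$, and $\V(L) \cap \Phi = \emptyset$ by our choice of $u$.

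The main obstacle I expect is the prime-avoidance step. Arranging $u$ to lie in $\Ann_R(\xi)$ and be $M$-regular simultaneously requires $\Ann_R(\xi) \not\subset \p$ for every $\p \in \Ass M$, which is equivalent to the combined sequence $0 \to K \to X \to M \to 0$ already splitting upon localizing at each associated prime of $M$. This is a local question at the minimal primes of $R$ and requires additional care, for example by arranging in the construction of $X$ that $M_\p$ is $R_\p$-projective at every such prime.
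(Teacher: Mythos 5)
Your opening steps coincide with the paper's: apply Lemma \ref{nxm} at each $\p\in\Phi$, take $X=\bigoplus X_\p\in\X$, combine into $\sigma\colon 0\to K\to X\to M\to 0$, and observe that $\sigma_\p=0$ for $\p\in\Phi$. The divergence, and the genuine gap, comes at the next step.

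You want to choose $u\in\Ann_R(\sigma)$ that is simultaneously outside every $\p\in\Phi$ and $M$-regular. As you yourself noticed, prime avoidance produces such a $u$ only if $\Ann_R(\sigma)\not\subseteq\p$ for every $\p\in\Ass M$, i.e.\ only if $\sigma$ already splits after localizing at the associated primes of $M$. Nothing in the hypotheses forces this: the assumption $M_\p\in\add_{R_\p}\X_\p$ is made only for $\p\in\Phi$, and $\Ass M$ may well contain minimal primes of $R$ at which $M$ is not free and $\sigma$ does not split. Your suggested repair --- ``arranging in the construction of $X$ that $M_\p$ is $R_\p$-projective'' at such primes --- is not available, since $M$ is given, not constructed, and can perfectly well be nonfree at a minimal prime (e.g.\ over a nonreduced artinian $R$). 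Dropping $M$-regularity is no better: then $M[u]\neq 0$ and the pullback along $u\cdot\mathrm{id}_M$ gives a four-term exact sequence rather than the short exact sequence $0\to K\oplus M\to X\to M/uM\to 0$, so the syzygy step breaks.

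The paper sidesteps this entirely by never passing to a quotient $M/uM$. It chooses $f=f_1\cdots f_n$ with $f_i\in R\setminus\p_i$ and $f_i\sigma=0$, so that $\sigma_f=0$; since $\Hom_{R_f}(M_f,X_f)\cong\Hom_R(M,X)_f$, the splitting over $R_f$ lifts to an $R$-homomorphism $\nu\colon M\to X$ with $(\nu_f,\psi_f)\colon M_f\oplus K_f\to X_f$ an isomorphism. Adding a free $F$ to make $(\nu,\psi,\varepsilon)\colon M\oplus K\oplus F\to X$ surjective and taking $L$ to be its kernel, two snake-lemma computations give $L_f\cong F_f$ (so $\V(L)\cap\Phi=\emptyset$) and an exact sequence $0\to L_\p\to M_\p\oplus F_\p\to M_\p\to 0$ (so $\V(L)\subseteq\V(M)$). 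Note that here $f$ need only avoid the primes of $\Phi$, not $\Ass M$; no regularity of $f$ on $M$ is required, which is exactly what makes the argument go through without the extra hypothesis your version would need.
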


\begin{proof}
Let $\Phi=\{\p_1,\dots,\p_n\}$.
Lemma \ref{nxm} shows that for each $1\le i\le n$ there exists an exact sequence $0 \to K_i \to X_i \overset{\phi_i}{\to} M \to 0$ with $X_i\in\X$ such that the localized exact sequence $0 \to (K_i)_{\p_i} \to (X_i)_{\p_i} \overset{(\phi_i)_{\p_i}}{\to} M_{\p_i} \to 0$ splits.
We get an exact sequence $\sigma: 0 \to K \overset{\psi}{\to} X \overset{\phi}{\to} M \to 0$, where $X=X_1\oplus\cdots\oplus X_n\in\X$ and $\phi=(\phi_1,\dots,\phi_n)$.
Note that $X$ and $K$ are Cohen-Macaulay since so are $M$ and each $X_i$.
It is easily seen that the localization of $\sigma$ at $\p_i$ splits for each $1\le i\le n$.
We identify the exact sequence $\sigma$ as the corresponding element of $\Ext_R^1(M,K)$.
Then for each $1\le i\le n$ we have $\sigma_{\p_i}=0$ in $\Ext_R^1(M,K)_{\p_i}$, which implies that there exists an element $f_i\in R\setminus\p_i$ such that $f_i\sigma=0$.
Setting $f=f_1f_2\cdots f_n$, we have $f\notin\p_i$ for all $1\le i\le n$ and $f\sigma=0$.
As $\sigma_f=0$ in $\Ext_R^1(M,K)_f\cong\Ext_{R_f}^1(M_f,K_f)$, the exact sequence $\sigma_f:0 \to K_f \overset{\psi_f}{\to} X_f \overset{\phi_f}{\to} M_f \to 0$ of $R_f$-modules splits.
Since $\Hom_{R_f}(M_f,X_f)$ is isomorphic to $\Hom_R(M,X)_f$, we see that there exists an $R$-homomorphism $\nu:M\to X$ such that $(\nu_f,\psi_f):M_f\oplus K_f\to X_f$ is an isomorphism.
Take an $R$-homomorphism $\varepsilon: F\to X$ with $F$ free such that $(\nu,\psi,\varepsilon):M\oplus K\oplus F\to X$ is surjective.
We have an exact sequence $0 \to L \to N \overset{(\nu,\psi,\varepsilon)}{\longrightarrow} X \to 0$ of Cohen-Macaulay $R$-modules, where $N:=M\oplus K\oplus F$.
There is a commutative diagram
$$
\begin{CD}
@. @. 0 \\
@. @. @VVV \\
0 @>>> 0 @>>> M_f\oplus K_f @>{(\nu_f,\psi_f)}>> X_f @>>> 0 \\
@. @VVV @V{
\left(
\begin{smallmatrix}
1 & 0 \\
0 & 1 \\
0 & 0
\end{smallmatrix}
\right)
}VV @| \\
0 @>>> L_f @>>> M_f\oplus K_f\oplus F_f @>{(\nu_f,\psi_f,\varepsilon_f)}>> X_f @>>> 0 \\
@. @. @VVV \\
@. @. F_f \\
@. @. @VVV \\
@. @. 0
\end{CD}
$$
with exact rows and columns.
The snake lemma implies that $L_f$ is isomorphic to $F_f$, and hence each $L_{\p_i}$ is isomorphic to the free $R_{\p_i}$-module $F_{\p_i}$.
Therefore we have $\V_R(L)\cap\Phi=\emptyset$.
Next, let us prove that $\V_R(L)$ is contained in $\V_R(M)$.
Fix any prime ideal $\p\in\V_R(L)$.
There is a commutative diagram
$$
\begin{CD}
@. @. 0 \\
@. @. @VVV \\
@. @. L_\p \\
@. @. @VVV \\
0 @>>> K_\p @>{
\left(
\begin{smallmatrix}
0 \\
1 \\
0
\end{smallmatrix}
\right)
}>> M_\p\oplus K_\p\oplus F_\p @>>> M_\p\oplus F_\p @>>> 0 \\
@. @| @V{(\nu_\p,\psi_\p,\varepsilon_\p)}VV @VVV \\
0 @>>> K_\p @>{\psi_\p}>> X_\p @>{\phi_\p}>> M_\p @>>> 0 \\
@. @. @VVV \\
@. @. 0
\end{CD}
$$
with exact rows and columns.
It follows from the snake lemma that we have an exact sequence $0 \to L_\p \to M_\p\oplus F_\p \to M_\p \to 0$ of $R_\p$-modules.
Since $L_\p$ is not a free $R_\p$-module, neither is $M_\p$.
Thus $\p$ belongs to $\V_R(M)$, as desired.
\end{proof}

The additive closure of a localization of a resolving subcategory is resolving again.

\begin{lem}\label{addres}
Let $\X$ be a resolving subcategory of $\mod R$.
Then the subcategory $\add_{R_\p}\X_\p$ of $\mod R_\p$ is resolving for every prime ideal $\p$ of $R$.
\end{lem}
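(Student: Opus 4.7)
The plan is to verify each of the four defining conditions for $\add_{R_\p}\X_\p$ to be a resolving subcategory of $\mod R_\p$. Three of them are essentially routine. Containment of $R_\p$ follows from $R\in\X$, closure under direct summands is built into the definition of $\add$, and closure under syzygies follows from the observation that if $M\oplus M'\cong X_\p$ with $X\in\X$, then $\Omega_{R_\p}M\oplus\Omega_{R_\p}M'$ agrees with $\Omega_{R_\p}(X_\p)$, and this in turn agrees with $(\Omega_R X)_\p$ up to a free summand. Since $\X$ is closed under syzygies by Proposition \ref{resdef}, $\Omega_R X\in\X$, so $\Omega_{R_\p}M$ is a summand of a module in $\add_{R_\p}\X_\p$ (free summands being harmless because free modules belong to $\X$).

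The substantive step is closure under extensions. Start with a short exact sequence $0\to L\to M\to N\to 0$ in $\mod R_\p$ with $L,N\in\add_{R_\p}\X_\p$, and choose $L',N'\in\mod R_\p$ together with $X,Y\in\X$ so that $L\oplus L'\cong X_\p$ and $N\oplus N'\cong Y_\p$. Direct-summing the given sequence with the trivial sequences determined by $L'$ and $N'$ yields an extension
\[
0\to X_\p\to M\oplus L'\oplus N'\to Y_\p\to 0,
\]
in which $M$ appears as a direct summand of the middle term. This extension represents a class $\alpha\in\Ext_{R_\p}^1(Y_\p,X_\p)\cong\Ext_R^1(Y,X)_\p$, so we may write $\alpha=\beta/s$ for some $\beta\in\Ext_R^1(Y,X)$ and some $s\in R\setminus\p$.

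Now take the extension $0\to X\to Z\to Y\to 0$ in $\mod R$ corresponding to $\beta$. Because $\X$ is closed under extensions and contains both $X$ and $Y$, we obtain $Z\in\X$. Localizing at $\p$ produces an extension of $Y_\p$ by $X_\p$ whose class is $\beta/1=s\alpha$ in $\Ext_{R_\p}^1(Y_\p,X_\p)$. Here is the key point: since $s$ is a unit in $R_\p$, multiplication by $s$ on either $X_\p$ or $Y_\p$ is an isomorphism, so the classes $\alpha$ and $s\alpha$ have isomorphic middle terms. Thus $Z_\p\cong M\oplus L'\oplus N'$, which exhibits $M$ as a direct summand of an object of $\X_\p$, giving $M\in\add_{R_\p}\X_\p$.

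The part I expect to require the most care is precisely this last reduction: the extension class $\alpha$ is only known up to clearing a denominator $s$, and one needs to know that passing from $\alpha$ to $s\alpha$ does not alter the middle term up to isomorphism. Everything else amounts to routine bookkeeping with localization and the resolving hypothesis on $\X$.
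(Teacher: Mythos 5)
Your proof is correct and follows essentially the same route as the paper's: verify the four conditions of Proposition \ref{resdef}, with the only substantive work being closure under extensions via the isomorphism $\Ext_{R_\p}^1(Y_\p,X_\p)\cong\Ext_R^1(Y,X)_\p$. The paper states more tersely that the localized class ``is isomorphic to $\tau_\p$ for some exact sequence $\tau$''; you have spelled out the denominator-clearing argument (that $\alpha$ and $s\alpha$ have isomorphic middle terms because $s$ is a unit in $R_\p$), which is precisely the observation the paper leaves implicit.
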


\begin{proof}
The category $\add_{R_\p}\X_\p$ is closed under direct summands, and contains $R_\p$.
According to Proposition \ref{resdef}, it suffices to prove that $\add_{R_\p}\X_\p$ is closed under extensions and syzygies.

First, we verify that $\add_{R_\p}\X_\p$ is closed under extensions.
Let $0 \to L \overset{f}{\to} M \overset{g}{\to} N \to 0$ be an exact sequence of $R_\p$-modules such that $L$ and $N$ are in $\add_{R_\p}\X_\p$.
Then we have $L\oplus L'\cong X_\p$ and $N\oplus N'\cong Y_\p$ for some $R$-modules $X,Y\in\X$ and $R_\p$-modules $L',N'$.
Taking the direct sum of the identity map $L'\to L'$ (respectively, $N'\to N'$) and $f$ (respectively, $g$), we have an exact sequence $\sigma: 0 \to X_\p \to M' \to Y_\p \to 0$ of $R_\p$-modules such that $M$ is a direct summand of $M'$.
This exact sequence $\sigma$ can be regarded as an element of $\Ext_{R_\p}^1(Y_\p,X_\p)\cong\Ext_R^1(Y,X)_\p$, and we observe that it is isomorphic to $\tau_\p$ for some exact sequence $\tau: 0 \to X \to E \to Y \to 0$ of $R$-modules.
Hence $M'$ is isomorphic to $E_\p$, and $E$ belongs to $\X$ since $\X$ is closed under extensions.
Therefore $M$ belongs to $\add_{R_\p}\X_\p$.

Next, we check that $\add_{R_\p}\X_\p$ is closed under syzygies.
Let $M$ be an $R_\p$-module in $\add_{R_\p}\X_\p$.
Then $M\oplus N\cong X_\p$ for some $R_\p$-module $N$ and $X\in\X$.
Hence we get isomorphisms $\Omega_{R_\p}M\oplus\Omega_{R_\p}N\cong\Omega_{R_\p}X_\p\cong(\Omega_RX)_\p$ up to free summand.
Since $\X$ is closed under syzygies, $(\Omega_RX)_\p$ is in $\X_\p$, and so $\Omega_{R_\p}M$ is in $\add_{R_\p}\X_\p$.
\end{proof}

Let $\Phi$ be a subset of $\Spec R$.
Recall that the {\em dimension} $\dim\Phi$ of $\Phi$ is defined as the supremum of $\dim R/\p$ where $\p$ runs through all prime ideals in $\Phi$.
Hence $\dim\Phi=-\infty$ if and only if $\Phi$ is empty.
We denote by $\min\Phi$ the set of minimal elements of $\Phi$ with respect to inclusion relation.
Then note that $\dim\Phi$ is equal to the supremum of $\dim R/\p$ where $\p$ runs through all prime ideals in $\min\Phi$.
The proposition below is the essential part of the main result of this section.

\begin{prop}\label{thickcm}
Let $R$ be a Cohen-Macaulay local ring with a canonical module $\omega$.
Let $\X$ be a thick subcategory of $\CM(R)$ containing $R$ and $\omega$.
Let $M$ be a Cohen-Macaulay $R$-module.
Assume that $\kappa(\p)$ belongs to $\widetilde{\add_{R_\p}\X_\p}$ for all $\p\in\V_R(M)$.
If $\V_R(M)$ is contained in $\V_R(\X)$, then $M$ belongs to $\X$.
\end{prop}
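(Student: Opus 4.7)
The plan is to proceed by induction on $\dim\V_R(M)$. If $\V_R(M)=\emptyset$ then $M$ is locally free everywhere, hence free, so $M\in\add R\subseteq\X$. For the inductive step, set $n=\dim\V_R(M)\ge0$ and let $\Phi=\min\V_R(M)$; this is a finite set because $\V_R(M)$ is closed in $\Spec R$ (Proposition \ref{nfsupp}(2)). The aim is to apply Proposition \ref{lnx} with this $\Phi$, so the heart of the argument will be to verify that $M_\p\in\add_{R_\p}\X_\p$ for each $\p\in\Phi$. Granted this, Proposition \ref{lnx} produces a Cohen-Macaulay short exact sequence $0\to L\to N\to X\to 0$ with $X\in\X$, $M$ a direct summand of $N$, $\V_R(L)\subseteq\V_R(M)\subseteq\V_R(\X)$, and $\V_R(L)\cap\Phi=\emptyset$. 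Since $\V_R(L)$ is specialization-closed and misses every minimal prime of $\V_R(M)$, one has $\dim\V_R(L)<n$. The hypothesis on $\kappa(\p)$ is inherited for $\p\in\V_R(L)\subseteq\V_R(M)$, so the inductive hypothesis forces $L\in\X$; thickness of $\X$ in $\CM(R)$ applied to the triple then gives $N\in\X$, and summand closure yields $M\in\X$.

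To verify $M_\p\in\add_{R_\p}\X_\p$ for $\p\in\Phi$, note first that minimality of $\p$ in $\V_R(M)$ forces $M_\p$ to be a Cohen-Macaulay $R_\p$-module that is free on the punctured spectrum of $R_\p$, so Corollary \ref{kore} gives
$$
M_\p\in\ext_{R_\p}\bigl(\Omega_{R_\p}^{\dim R_\p}\kappa(\p)\bigr).
$$
By Lemma \ref{addres}, $\add_{R_\p}\X_\p$ is a resolving subcategory of $\mod R_\p$ contained in $\CM(R_\p)$, and it contains $R_\p$ and $\omega_{R_\p}$ because $R,\omega\in\X$. Once $\add_{R_\p}\X_\p$ is seen to be thick in $\CM(R_\p)$, Proposition \ref{7} (with $M=\kappa(\p)$, so that $\dim R_\p-\depth\kappa(\p)=\dim R_\p$) converts the hypothesis $\kappa(\p)\in\widetilde{\add_{R_\p}\X_\p}$ into $\Omega_{R_\p}^{\dim R_\p}\kappa(\p)\in\add_{R_\p}\X_\p$. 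Combined with the extension-closure built into Lemma \ref{addres}, this forces $M_\p\in\add_{R_\p}\X_\p$, completing the verification.

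The main obstacle is therefore the thickness of $\add_{R_\p}\X_\p$ in $\CM(R_\p)$. The resolving property from Lemma \ref{addres} already handles closure under extensions and under kernels of epimorphisms, so only the \emph{cokernel of monomorphism} half of the $2$-out-of-$3$ axiom remains: given a CM exact sequence $0\to A\to B\to C\to 0$ over $R_\p$ with $A,B\in\add_{R_\p}\X_\p$, one must conclude $C\in\add_{R_\p}\X_\p$. I expect this to be handled by exploiting the presence of $\omega_{R_\p}$ in $\add_{R_\p}\X_\p$, which enables cosyzygy sequences $0\to A\to\omega_{R_\p}^{\,n_0}\to\Omega^{-1}A\to 0$ for Cohen-Macaulay $A$; combined with a pushout/pullback construction and a careful use of the thickness of $\X$ over $R$, these should close $\add_{R_\p}\X_\p$ under the missing operation. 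This verification is expected to be the most delicate technical ingredient of the proof.
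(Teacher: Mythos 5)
Your proposal is correct and follows essentially the same route as the paper: induction on $\dim\V_R(M)$, reduction to the minimal primes of $\V_R(M)$ via Proposition \ref{lnx}, and a local verification that $M_\p\in\add_{R_\p}\X_\p$ obtained by combining Corollary \ref{kore} with Proposition \ref{7} applied to $\add_{R_\p}\X_\p$ (the paper phrases this local verification as an inductive re-invocation of the statement over $R_\p$, but it unwinds to exactly your direct argument once $\dim\V_{R_\p}(M_\p)=0$). The one piece you flag but only sketch --- thickness of $\add_{R_\p}\X_\p$ in $\CM(R_\p)$ --- is indeed the crux, and your sketch matches the paper's step (3) precisely: form the cosyzygy $0\to X\to\omega^{\oplus n}\to Y\to 0$ over $R$ so that $Y\in\X$ by thickness of $\X$, localize, take a pushout against $0\to X_\p\to M'\to N\to 0$, use $\Ext^1_{R_\p}(N,\omega_\p)=0$ to split the middle column, and conclude $N\in\add_{R_\p}\X_\p$ from extension-closure and summand-closure of the resolving subcategory.
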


\begin{proof}
We use induction on $\dim\V_R(M)$.

When $\dim\V_R(M)=-\infty$, we have $\V_R(M)=\emptyset$.
Hence $M$ is a free $R$-module, and $M$ belongs to $\X$.

When $\dim\V_R(M)=0$, the set $\V_R(M)$ coincides with $\{\m\}$, hence, $M$ is free on the punctured spectrum of $R$.
Corollary \ref{kore} shows that $M$ is in $\ext(\Omega^dk)$.
By assumption, $\kappa(\m)$ belongs to $\widetilde{\add_{R_\m}\X_\m}$, which means that the $R$-module $k$ belongs to $\widetilde\X$, and $\Omega^dk$ belongs to $\X$ by Proposition \ref{7}.
Since the subcategory $\X$ of $\mod R$ is closed under direct summands and extensions, $M$ belongs to $\X$.

Let us consider the case where $\dim\V_R(M)\ge 1$.
Fix a prime ideal $\p\in\min\V_R(M)$.
We check step by step that $M_\p$ satisfies the induction hypothesis.

(1) It is easy to see from the minimality of $\p$ that $\dim\V_{R_\p}(M_\p)=0<\dim\V_R(M)$.

(2) It is obvious that $\add_{R_\p}\X_\p$ is a subcategory of $\CM(R_\p)$ containing $R_\p$ and $\omega_\p$.

(3) Proposition \ref{thres1} and Lemma \ref{addres} say that $\add_{R_\p}\X_\p$ is a resolving subcategory of $\mod R_\p$.
Let $0 \to L \to M \to N \to 0$ be an exact sequence in $\CM(R_\p)$ such that $L$ and $M$ are in $\add_{R_\p}\X_\p$.
Then we have $L\oplus L'\cong X_\p$ for some $L'\in\add_{R_\p}\X_\p$ and $X\in\X$.
We obtain an exact sequence $0 \to X_\p \to M' \to N \to 0$, where $M':=M\oplus L'\in\add_{R_\p}\X_\p$.
Since $X$ is a Cohen-Macaulay $R$-module, there is an exact sequence $0 \to X \to \omega^{\oplus n} \to Y \to 0$ of Cohen-Macaulay $R$-modules.
As $X$ and $\omega$ belong to $\X$, the module $Y$ also belongs to $\X$ by the thickness of $\X$.
We make the following pushout diagram.
$$
\begin{CD}
@. 0 @. 0 \\
@. @VVV @VVV \\
0 @>>> X_\p @>>> M' @>>> N @>>> 0 \\
@. @VVV @VVV @| \\
0 @>>> \omega_\p^{\oplus n} @>>> E @>>> N @>>> 0 \\
@. @VVV @VVV \\
@. Y_\p @= Y_\p \\
@. @VVV @VVV \\
@. 0 @. 0
\end{CD}
$$
Since $N$ is a Cohen-Macaulay $R_\p$-module, we have $\Ext_{R_\p}^1(N,\omega_\p)=0$.
Hence the middle row is a split exact sequence, and we get an exact sequence $0 \to M' \to \omega_\p^{\oplus n}\oplus N \to Y_\p \to 0$.
Since $M'$ and $Y_\p$ are in $\add_{R_\p}\X_\p$ and since $\add_{R_\p}\X_\p$ is a resolving subcategory of $\mod R_\p$, the module $N$ belongs to $\add_{R_\p}\X_\p$.
Consequently, $\add_{R_\p}\X_\p$ is a thick subcategory of $\CM(R_\p)$.

(4) Our assumption implies that $\kappa(\q)$ is in $(\add_{(R_\p)_\q}(\add_{R_\p}\X_\p)_\q)^{\widetilde\quad}$ for any $\q\in\V_{R_\p}(M_\p)$.

(5) It is easily checked that $\V_{R_\p}(M_\p)$ is contained in $\V_{R_\p}(\add_{R_\p}\X_\p)$.

The above arguments (1)--(5) enable us to apply the induction hypothesis to $M_\p$, and we see that $M_\p$ belongs to $\add_{R_\p}\X_\p$ for all $\p\in\min\V_R(M)$.

Now, by virtue of Proposition \ref{lnx}, there exists an exact sequence $0 \to L \to N \to X \to 0$ of Cohen-Macaulay $R$-modules with $X\in\X$, $\V_R(L)\subseteq\V_R(M)$ and $\V_R(L)\cap\min\V_R(M)=\emptyset$ such that $M$ is a direct summand of $N$.
We have $\dim\V_R(L)<\dim\V_R(M)$.
We can apply the induction hypothesis to $L$, and see that $L$ is in $\X$.
The above exact sequence shows that $N$ is also in $\X$, and so is $M$.
\end{proof}

We recall that the {\em non-Gorenstein locus} $\ng R$ of $R$ is defined as the set of prime ideals $\p$ of $R$ such that the local ring $R_\p$ is not Gorenstein.
Let $R$ be a Cohen-Macaulay local ring.
For a subset $\Phi$ of $\Spec R$, we denote by $\V_{\CM}^{-1}(\Phi)$ the subcategory of $\CM(R)$ consisting of all Cohen-Macaulay $R$-modules $M$ such that $\V(M)$ is contained in $\Phi$.
The following theorem is the main result of this section.

\begin{thm}\label{origin}
Let $R$ be a Cohen-Macaulay local ring with a canonical module $\omega$.
Then one has the following one-to-one correspondence:
$$
\begin{CD}
\left\{
\begin{matrix}
\text{thick subcategories }\X\text{ of }\CM(R)\\
\text{containing }R,\omega\text{ with}\\
\kappa(\p)\in\widetilde{\add_{R_\p}\X_\p}\text{ for all }\p\in\V(\X)\\
\end{matrix}
\right\}
\begin{matrix}
@>{\V}>>\\
@<<{\V_{\CM}^{-1}}<
\end{matrix}
\left\{
\begin{matrix}
\text{specialization-closed subsets }\Phi\\
\text{of }\Spec R\text{ satisfying}\\
\ng R\subseteq\Phi\subseteq\Sing R
\end{matrix}
\right\}.
\end{CD}
$$
\end{thm}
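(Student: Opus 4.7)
The plan is to verify that the assignments $\X\mapsto\V(\X)$ and $\Phi\mapsto\V_{\CM}^{-1}(\Phi)$ are mutually inverse bijections between the two indicated sets. First I would check well-definedness. For $\X$ in the left set, Proposition \ref{nfsupp}(2) tells us $\V(\X)$ is specialization-closed, $\V(\X)\subseteq\Sing R$ follows from $\X\subseteq\CM(R)$ and Proposition \ref{nfsupp}(1), and $\ng R\subseteq\V(\X)$ comes from $\omega\in\X$ together with the standard fact that $\V(\omega)=\ng R$ (a prime $\p$ lies in the nonfree locus of $\omega$ iff $\omega_\p$ is not free iff $R_\p$ is not Gorenstein). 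Conversely, for $\Phi$ in the right set, $\V_{\CM}^{-1}(\Phi)$ is clearly closed under direct summands and contains $R$ and $\omega$; closure under short exact sequences in $\CM(R)$ follows from Proposition \ref{variety}(2) together with the observation that if $0\to L_\p\to M_\p\to N_\p\to 0$ has two free terms, then the third is projective, hence free, over the local ring $R_\p$. Thus $\V_{\CM}^{-1}(\Phi)$ is thick.

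Next I would prove $\V(\V_{\CM}^{-1}(\Phi))=\Phi$. The inclusion $\subseteq$ is immediate. For the reverse inclusion, take $\p\in\Phi$ and consider $N_\p:=\Omega^d(R/\p)$, which is Cohen-Macaulay and satisfies $\V(N_\p)\subseteq\V(R/\p)=\v(\p)\subseteq\Phi$ by Proposition \ref{variety}(5) and the specialization-closedness of $\Phi$; hence $N_\p\in\V_{\CM}^{-1}(\Phi)$. Localizing, $(N_\p)_\p$ agrees with $\Omega_{R_\p}^d\kappa(\p)$ up to free summand, which is non-free because $\p\in\Phi\subseteq\Sing R$ forces $\kappa(\p)$ to have infinite projective dimension over $R_\p$. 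Therefore $\p\in\V(N_\p)\subseteq\V(\V_{\CM}^{-1}(\Phi))$. A byproduct of this construction: for every $\p\in\Phi$, the free resolution $0\to\Omega^d\kappa(\p)\to F_{d-1}\to\cdots\to F_0\to\kappa(\p)\to 0$ over $R_\p$ exhibits $\kappa(\p)\in\widetilde{\add_{R_\p}(\V_{\CM}^{-1}(\Phi))_\p}$, since $(N_\p)_\p$ and all free modules lie in $\add_{R_\p}(\V_{\CM}^{-1}(\Phi))_\p$. This verifies that $\V_{\CM}^{-1}(\Phi)$ really does belong to the left-hand set, completing the well-definedness check.

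Finally I would prove $\V_{\CM}^{-1}(\V(\X))=\X$. The inclusion $\supseteq$ is obvious, so the work is in the other direction. Let $M\in\V_{\CM}^{-1}(\V(\X))$, so $M$ is Cohen-Macaulay with $\V(M)\subseteq\V(\X)$. The hypothesis on $\X$ gives $\kappa(\p)\in\widetilde{\add_{R_\p}\X_\p}$ for every $\p\in\V(\X)$, and in particular for every $\p\in\V(M)$. All the hypotheses of Proposition \ref{thickcm} are then in place (thickness of $\X$, containment of $R$ and $\omega$, support condition on $M$, residue-field condition), and that proposition directly yields $M\in\X$.

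The main obstacle is invoking Proposition \ref{thickcm} correctly: it is the nontrivial structural input that converts the support-theoretic containment $\V(M)\subseteq\V(\X)$ into actual membership $M\in\X$, and the residue-field hypothesis in the definition of the left-hand set is precisely tailored to feed into its hypotheses. Everything else reduces to the bookkeeping of localization (Proposition \ref{clloc}), the closure properties of nonfree loci (Proposition \ref{variety}), and the construction of the Cohen-Macaulay witness $\Omega^d(R/\p)$ needed to realize $\p$ in the nonfree locus.
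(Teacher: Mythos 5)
Your proposal follows the same outline as the paper's proof: check well-definedness of both assignments, establish $\V(\V_{\CM}^{-1}(\Phi))=\Phi$ via the Cohen-Macaulay witness $\Omega^d(R/\p)$, extract the residue-field condition for $\V_{\CM}^{-1}(\Phi)$ from that construction, and establish $\V_{\CM}^{-1}(\V(\X))=\X$ by invoking Proposition \ref{thickcm}. Almost all of this matches the paper step for step and is correct.

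There is, however, a genuine gap in your thickness verification for $\V_{\CM}^{-1}(\Phi)$. You assert that in a short exact sequence $0\to L_\p\to M_\p\to N_\p\to 0$ over a local ring, ``if two terms are free, then the third is projective, hence free.'' This is false as a general statement: over a discrete valuation ring $(S,(x))$ the sequence $0\to(x)\to S\to S/(x)\to 0$ has two free terms, but $S/(x)$ is not projective. Proposition \ref{variety}(2) only covers the cases where the unknown is $L$ or $M$; when $L_\p$ and $M_\p$ are free and $N_\p$ is the unknown (cokernel) term, all that follows is $\pd_{R_\p}N_\p\le 1$. Concluding that $N_\p$ is free requires the additional input that $N_\p$ is Cohen-Macaulay over the Cohen-Macaulay local ring $R_\p$, so that the Auslander-Buchsbaum formula forces $\pd_{R_\p}N_\p=0$. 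This is exactly the argument the paper makes, and it is not a stylistic omission on your part: the restriction to $\CM(R)$ together with Auslander-Buchsbaum is precisely what makes $\V_{\CM}^{-1}(\Phi)$ closed under cokernels of monomorphisms in $\CM(R)$. Without that input the claimed closure property does not hold, so you need to spell it out.
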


\begin{proof}
Fix a thick subcategory $\X$ of $\CM(R)$ containing $R$ and $\omega$ such that $\kappa(\p)$ belongs to $\widetilde{\add_{R_\p}\X_\p}$ for every $\p\in\V(\X)$, and a specialization-closed subset $\Phi$ of $\Spec R$ with $\ng R\subseteq\Phi\subseteq\Sing R$.
We prove the theorem step by step.

(1) The set $\V(\X)$ is a specialization-closed subset of $\Spec R$ by Proposition \ref{nfsupp}(2).

(i) Let $\p$ be a prime ideal such that $R_\p$ is not Gorenstein.
Then $\omega_\p$ is not free over $R_\p$.
Since $\X$ contains $\omega$, the prime ideal $\p$ is in $\V(\X)$.
Thus $\ng R$ is contained in $\V(\X)$.

(ii) Proposition \ref{nfsupp}(1) shows that $\V(\X)$ is contained in $\Sing R$.

(2) We have $\V(\V_{\CM}^{-1}(\Phi))=\Phi$.
In fact, it is clear that $\V(\V_{\CM}^{-1}(\Phi))$ is contained in $\Phi$.
Let $\p$ be a prime ideal in $\Phi$.
Proposition \ref{variety}(6) implies that $R/\p$ belongs to $\V^{-1}(\Phi)$, and $\Omega^d(R/\p)$ belongs to $\V_{\CM}^{-1}(\Phi)$ by Proposition \ref{variety}(3).
Hence $\V(\Omega^d(R/\p))$ is contained in $\V(\V_{\CM}^{-1}(\Phi))$.
Since the local ring $R_\p$ is not regular, the $R_\p$-module $\kappa(\p)$ has infinite projective dimension.
We see from this that $\p$ is in $\V(\Omega^d(R/\p))$, and so $\p$ is in $\V(\V_{\CM}^{-1}(\Phi))$.
Thus $\Phi$ is contained in $\V(\V_{\CM}^{-1}(\Phi))$, as desired.

(3) (i) It is clear that $\V_{\CM}^{-1}(\Phi)$ contains $R$.

(ii) The category $\V^{-1}(\Phi)$ of $\mod R$ is resolving by Proposition \ref{variety}(3).
In order to see that $\V_{\CM}^{-1}(\Phi)$ is a thick subcategory of $\CM(R)$, it is enough to check that if $0 \to L \to M \to N \to 0$ is an exact sequence of Cohen-Macaulay $R$-modules such that $L,M$ belong to $\V_{\CM}^{-1}(\Phi)$, then $N$ also belongs to $\V_{\CM}^{-1}(\Phi)$.
Let $\p$ be a prime ideal in $\V(N)$.
Assume that $\p$ is not in $\Phi$.
Then $L_\p$ and $M_\p$ must be free $R_\p$-modules, which implies that $N_\p$ has projective dimension at most $1$.
The Auslander-Buchsbaum formula shows that $N_\p$ is a free $R_\p$-module, which contradicts the choice of $\p$.
Hence $\p$ is in $\Phi$, and we have that $N$ belongs to $\V_{\CM}^{-1}(\Phi)$.
Thus $\V_{\CM}^{-1}(\Phi)$ is a thick subcategory of $\CM(R)$.

(iii) Since $\Phi$ contains $\ng R$, we see that the canonical module $\omega$ is in $\V_{\CM}^{-1}(\Phi)$.

(iv) Let $\p$ be a prime ideal in $\V(\V_{\CM}^{-1}(\Phi))$.
Then the arguments in (2) prove that $\Omega_R^d(R/\p)$ belongs to $\V_{\CM}^{-1}(\Phi)$.
Hence $\Omega_{R_\p}^d\kappa(\p)$ belongs to $\add_{R_\p}(\V_{\CM}^{-1}(\Phi))_\p$, which implies that $\kappa(\p)$ is in $(\add_{R_\p}(\V_{\CM}^{-1}(\Phi))_\p)^{\widetilde\quad}$.

(4) We have $\V_{\CM}^{-1}(\V(\X))=\X$.
Indeed, it is obvious that $\X$ is contained in $\V_{\CM}^{-1}(\V(\X))$.
Let $M$ be a module in $\V_{\CM}^{-1}(\V(\X))$.
Then $\V(M)$ is contained in $\V(\X)$.
Proposition \ref{thickcm} proves that $M$ belongs to $\X$.
Thus $\V_{\CM}^{-1}(\V(\X))$ is contained in $\X$, as required.

Consequently, we obtain the one-to-one correspondence in the theorem.
\end{proof}

\begin{cor}\label{useinex}
Let $R$ be a Cohen-Macaulay local ring with a canonical module $\omega$.
Let $\X$ be a thick subcategory of $\CM(R)$ containing $R$ and $\omega$.
Then the following are equivalent:
\begin{enumerate}[\rm (1)]
\item
One has $\kappa(\p)\in\widetilde{\add_{R_\p}\X_\p}$ for all $\p\in\V(\X)$;
\item
One has $R/\p\in\widetilde\X$ for all $\p\in\V(\X)$.
\end{enumerate}
Hence one has the following one-to-one correspondence:
$$
\begin{CD}
\left\{
\begin{matrix}
\text{thick subcategories }\X\text{ of }\CM(R)\\
\text{containing }R,\omega\text{ with}\\
R/\p\in\widetilde\X\text{ for all }\p\in\V(\X)\\
\end{matrix}
\right\}
\begin{matrix}
@>{\V}>>\\
@<<{\V_{\CM}^{-1}}<
\end{matrix}
\left\{
\begin{matrix}
\text{specialization-closed subsets }\Phi\\
\text{of }\Spec R\text{ satisfying}\\
\ng R\subseteq\Phi\subseteq\Sing R
\end{matrix}
\right\}.
\end{CD}
$$
\end{cor}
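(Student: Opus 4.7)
The plan is to prove the equivalence of (1) and (2), after which the one-to-one correspondence is immediate from Theorem \ref{origin} by substituting the equivalent condition. Throughout, the central tool is Proposition \ref{7}, which describes membership in $\widetilde\X$ (for a thick subcategory $\X \subseteq \CM(R)$ containing $R$ and $\omega$) in terms of syzygies lying in $\X$, and the second central tool is Proposition \ref{thickcm}, which reconstructs modules in $\X$ from local $\add_{R_\p}\X_\p$-information.

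The implication (2) $\Rightarrow$ (1) should be routine. Fix $\p \in \V(\X)$ and take an exact sequence
$$0 \to X_n \to X_{n-1} \to \cdots \to X_0 \to R/\p \to 0$$
witnessing $R/\p \in \widetilde\X$. Localizing at $\p$ is exact and produces an $\X_\p$-resolution of $\kappa(\p)$; since trivially $\X_\p \subseteq \add_{R_\p}\X_\p$, we conclude $\kappa(\p) \in \widetilde{\add_{R_\p}\X_\p}$.

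The implication (1) $\Rightarrow$ (2) is the main step. Fix $\p \in \V(\X)$ and set $M := \Omega^d(R/\p)$, a Cohen-Macaulay $R$-module. By Proposition \ref{7} applied to $R/\p$, it suffices to show $M \in \X$, for then $R/\p \in \widetilde\X$. I will verify the hypotheses of Proposition \ref{thickcm} for $M$. First, $\V(\X)$ is specialization-closed (Proposition \ref{nfsupp}(2)), and $\p \in \V(\X)$ together with $\p \in \Sing R \subseteq \s(R)$ gives, via Proposition \ref{variety}(5), $\V_R(R/\p) = \v(\p) \subseteq \V(\X)$; combining with $\V(M) \subseteq \V(R/\p)$ (from $M$ being a syzygy of $R/\p$), one gets $\V(M) \subseteq \V(\X)$. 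Second, for any $\q \in \V(M)$, one has $\q \supseteq \p$, so $\q \in \V(\X)$, and the hypothesis (1) supplies $\kappa(\q) \in \widetilde{\add_{R_\q}\X_\q}$. Thus Proposition \ref{thickcm} yields $M \in \X$, completing (1) $\Rightarrow$ (2).

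The only delicate point, and what I expect to be the sole obstacle, is checking the inclusion $\V(M) \subseteq \V(\X)$ carefully: one must know that $\p$ sits in $\s(R)$ in order to identify $\V_R(R/\p)$ with $\v(\p)$, and this uses $\V(\X) \subseteq \Sing R$ from the characterization in Theorem \ref{origin}. Once that equivalence is in hand, the displayed correspondence follows by replacing condition (1) in Theorem \ref{origin} with condition (2): both define the same class of thick subcategories, so the maps $\V$ and $\V_{\CM}^{-1}$ restrict to mutually inverse bijections between this class and specialization-closed subsets $\Phi$ with $\ng R \subseteq \Phi \subseteq \Sing R$.
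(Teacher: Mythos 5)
Your proof is correct, but for the implication (1)\,$\Rightarrow$\,(2) you take a modestly different route from the paper. The paper invokes Theorem \ref{origin} as a black box to write $\X = \V_{\CM}^{-1}(\Phi)$ for some specialization-closed $\Phi$ with $\ng R \subseteq \Phi \subseteq \Sing R$, then gets $\Omega^d(R/\p)\in\V^{-1}(\Phi)$ from Proposition \ref{variety}(6) and (3), and concludes $\Omega^d(R/\p)\in\X$, whence $R/\p\in\widetilde\X$ by Proposition \ref{7}. You instead unpack the machinery, applying the reconstruction result Proposition \ref{thickcm} directly to $M = \Omega^d(R/\p)$ and verifying its two hypotheses by hand (namely $\V(M)\subseteq\V(\X)$ via Proposition \ref{variety}(5) and specialization-closedness, and the $\widetilde{\add_{R_\p}\X_\p}$ condition from hypothesis (1)). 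Since Proposition \ref{thickcm} is precisely what powers the surjectivity half of Theorem \ref{origin}, the two arguments are cousins; yours is a bit longer but has the minor advantage of deriving the equivalence (1)\,$\Leftrightarrow$\,(2) without using the bijection itself, so that the displayed correspondence then follows purely by substituting one equivalent side-condition for the other. One small attribution slip: the facts you need that $\V(\X)$ is specialization-closed and sits inside $\Sing R\subseteq\s(R)$ do not come ``from the characterization in Theorem \ref{origin}'' but directly from Proposition \ref{nfsupp}(1)--(2); this does not affect correctness.
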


\begin{proof}
(2) $\Rightarrow$ (1): Localization at $\p$ shows this implication.

(1) $\Rightarrow$ (2): According to Theorem \ref{origin}, there exists a specialization-closed subset $\Phi$ of $\Spec R$ with $\ng R\subseteq\Phi\subseteq\Sing R$ such that $\X=\V_{\CM}^{-1}(\Phi)$.
Let $\p$ be a prime ideal in $\V(\X)=\Phi$.
Then $R/\p$ belongs to $\V^{-1}(\Phi)$ by Proposition \ref{variety}(6), and hence $\Omega_R^d(R/\p)$ belongs to $\V_{\CM}^{-1}(\Phi)=\X$ by Proposition \ref{variety}(3).
This implies that $R/\p$ is in $\widetilde\X$.

The last assertion follows from Theorem \ref{origin}.
\end{proof}

Let $R$ be a Cohen-Macaulay local ring.
We recall that $R$ is said to be {\em generically Gorenstein} if $R_\p$ is a Gorenstein local ring for all $\p\in\Min R$.
(Here $\Min R$ denotes the set of minimal prime ideals of $R$.)
Also, we recall that an $R$-module $M$ is said to be {\em generically free} if $M_\p$ is a free $R_\p$-module for all $\p\in\Min R$.

\begin{ex}
Let $R$ be a generically Gorenstein, Cohen-Macaulay local ring with a canonical module $\omega$.
Put $\Phi=\Sing R\setminus\Min R$.
Then it is straightforward that $\Phi$ is a specialization-closed subset of $\Spec R$ satisfying $\ng R\subseteq\Phi\subseteq\Sing R$.
We easily observe that the subcategory $\V_{\CM}^{-1}(\Phi)$ of $\CM(R)$ consists of all Cohen-Macaulay $R$-modules that are generically free.
By Corollary \ref{useinex}, the generically free Cohen-Macaulay $R$-modules form a thick subcategory $\X$ of $\CM(R)$ containing $R$ and $\omega$ such that $R/\p$ is in $\widetilde\X$ for all $\p\in\V(\X)=\Phi$.
\end{ex}

\section{Thick and resolving subcategories and hypersurfaces}\label{tsocmoh}

In this section, we study thick subcategories of $\CM(R)$ containing $R$ when $R$ is an abstract hypersurface, and study resolving subcategories of $\mod R$ contained in $\CM(R)$ containing $\Omega^dk$ when $R$ is a Cohen-Macaulay local ring that is an abstract hypersurface on the punctured spectrum.
We shall completely classify those subcategories by corresponding them to specialization-closed subsets of $\Spec R$ contained in $\Sing R$.
First of all, we recall the definitions of a hypersurface and an abstract hypersurface.

\begin{dfn}
\begin{enumerate}[(1)]
\item
A local ring $R$ is called a {\em hypersurface} if there exist a regular local ring $S$ and an element $f$ of $S$ such that $R$ is isomorphic to $S/(f)$.
\item
A local ring $R$ is called an {\em abstract hypersurface} if there exist a complete regular local ring $S$ and an element $f$ of $S$ such that the completion $\widehat R$ of $R$ is isomorphic to $S/(f)$.
\end{enumerate}
\end{dfn}

\begin{rem}\label{hyprem}
Every hypersurface is an abstract hypersurface, while we do not know whether there exists an abstract hypersurface that is not a hypersurface.
A complete local ring is a hypersurface if and only if it is an abstract hypersurface.
In particular, for an artinian local ring, being a hypersurface is equivalent to being an abstract hypersurface.
\end{rem}

Over an abstract hypersurface $R$, a thick subcategory of $\CM(R)$ containing $R$ is nothing but a resolving subcategory of $\mod R$ contained in $\CM(R)$:

\begin{prop}\label{thres2}
Let $R$ be an abstract hypersurface, and let $\X$ be a subcategory of $\mod R$.
Then $\X$ is a thick subcategory of $\CM(R)$ containing $R$ if and only if $\X$ is a resolving subcategory of $\mod R$ contained in $\CM(R)$.
\end{prop}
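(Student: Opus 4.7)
The forward direction, that any thick subcategory of $\CM(R)$ containing $R$ is a resolving subcategory of $\mod R$ contained in $\CM(R)$, is exactly the content of Proposition \ref{thres1}; so the real work is the converse.

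For the converse, let $\X$ be a resolving subcategory of $\mod R$ contained in $\CM(R)$. Then $\X$ contains $R$, is closed under direct summands and extensions, and by Proposition \ref{resdef} is closed under syzygies; equivalently, it is closed under kernels of epimorphisms. It remains to check the exact-sequence condition defining a thick subcategory of $\CM(R)$: for any short exact sequence $0\to L\to M\to N\to 0$ of Cohen-Macaulay $R$-modules, if two of $L,M,N$ lie in $\X$ then so does the third. The case $L,N\in\X$ is immediate from extension-closedness, and the case $M,N\in\X$ from closure under kernels of epimorphisms. The nontrivial case is to deduce $N\in\X$ from $L,M\in\X$.

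For this case, I would first apply Lemma \ref{zurasi} to produce an exact sequence $0\to \Omega N\to L\oplus F\to M\to 0$ with $F$ free. Since $R\in\X$ one has $F\in\X$, hence $L\oplus F\in\X$, and the previous case gives $\Omega N\in\X$; syzygy closure then yields $\Omega^2 N\in\X$. At this point the hypothesis that $R$ is an abstract hypersurface enters: writing $\widehat R\cong S/(f)$ with $S$ complete regular local, Eisenbud's theorem on matrix factorizations asserts that the minimal free resolution of every maximal Cohen-Macaulay $\widehat R$-module is $2$-periodic, which gives an isomorphism $\widehat N\cong \Omega_{\widehat R}^2\widehat N\oplus \widehat R^{\,a}$ for some $a\ge 0$. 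Since minimal free resolutions commute with $\m$-adic completion, $\Omega_{\widehat R}^2\widehat N$ is canonically identified with $\widehat{\Omega_R^2 N}$; combining this with the descent of isomorphism for finitely generated modules between $\widehat R$ and $R$ yields $N\cong \Omega_R^2 N\oplus R^a$ as $R$-modules. Since $\Omega_R^2 N$ and $R$ both lie in $\X$, additive and direct-summand closure force $N\in\X$, completing the verification.

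The main obstacle is justifying the stable isomorphism $N\cong \Omega_R^2 N$ modulo free summands at the $R$-level, since Eisenbud's periodicity is naturally formulated over a genuine hypersurface, which for an abstract hypersurface is only $\widehat R$. One may proceed either by descending the isomorphism across completion using the theorem of Guralnick that finitely generated $R$-modules are isomorphic iff their completions are, or by developing the matrix-factorization formalism directly over $R$; everything else in the argument is a formal manipulation of the closure properties of resolving subcategories together with Lemma \ref{zurasi}.
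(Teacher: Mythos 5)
Your proof is correct and follows essentially the same route as the paper: reduce to the case $L,M\in\X\Rightarrow N\in\X$, apply Lemma~\ref{zurasi} to get $0\to\Omega N\to L\oplus F\to M\to 0$, conclude $\Omega^2N\in\X$, and invoke $2$-periodicity of minimal resolutions of MCM modules over an (abstract) hypersurface to recover $N$. The only difference is that the paper simply cites \cite[Theorem~5.1.1]{Av2}, which already gives the stable isomorphism $N\cong\Omega^2_RN$ over the abstract hypersurface $R$ itself, whereas you unfold that citation into the matrix-factorization statement over $\widehat R$ followed by a descent-of-isomorphism argument across completion; that extra step is sound but not needed given the reference.
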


\begin{proof}
We have already dealt with the `only if' part in Proposition \ref{thres1}.
Let us consider the `if' part.
Let $0 \to L \to M \to N \to 0$ be an exact sequence in $\CM(R)$.
Suppose that $L$ and $M$ belong to $\X$.
We want to show that $N$ also belongs to $\X$.
Applying Lemma \ref{zurasi}, we get an exact sequence $0 \to \Omega N \to L\oplus F \to M \to 0$ of $R$-modules with $F$ free.
As $L\oplus F$ and $M$ are in the resolving subcategory $\X$ of $\mod R$, so is $\Omega N$, and hence so is $\Omega^2N$.
We have an isomorphism $\Omega^2N\cong N$ up to free summand; see \cite[Theorem 5.1.1]{Av2}.
Hence $N$ is in $\X$.
Thus $\X$ is a thick subcategory of $\CM(R)$ containing $R$.
\end{proof}

Using Proposition \ref{thres2}, we can show the following result.

\begin{cor}\label{thresc}
Let $R$ be an abstract hypersurface.
Let $M$ be a Cohen-Macaulay $R$-module.
Then $\res M$ is the thick subcategory of $\CM(R)$ generated by $R\oplus M$.
\end{cor}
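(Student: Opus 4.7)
The plan is to derive this corollary as a direct consequence of Proposition \ref{thres2}, using a standard double-containment argument that exploits the equivalence between the two notions (thick subcategory of $\CM(R)$ containing $R$ versus resolving subcategory of $\mod R$ contained in $\CM(R)$) given there.

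First, let $\T$ denote the thick subcategory of $\CM(R)$ generated by $R\oplus M$. To prove $\T\subseteq\res M$, I would observe that since $M$ is Cohen-Macaulay and $\CM(R)$ is itself a resolving subcategory of $\mod R$ (Example \ref{resex}(3)), we have $\res M\subseteq\CM(R)$. Thus $\res M$ is a resolving subcategory of $\mod R$ contained in $\CM(R)$, and Proposition \ref{thres2} promotes it to a thick subcategory of $\CM(R)$ containing $R$. Since $\res M$ also contains $M$, it contains $R\oplus M$, so minimality of $\T$ forces $\T\subseteq\res M$.

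For the reverse inclusion $\res M\subseteq\T$, I would argue that $\T$, being a thick subcategory of $\CM(R)$ containing $R$, is by Proposition \ref{thres2} a resolving subcategory of $\mod R$ contained in $\CM(R)$. Since $M\in\T$, minimality of $\res M$ yields $\res M\subseteq\T$. Combining the two inclusions gives $\T=\res M$.

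The main (and only) conceptual content of this corollary is already encapsulated in Proposition \ref{thres2}, which in turn rests on the hypersurface periodicity isomorphism $\Omega^2N\cong N$ up to free summand; the present corollary is essentially a formal consequence, so there is no substantial obstacle beyond invoking that proposition carefully in both directions.
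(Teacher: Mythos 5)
Your proof is correct and follows exactly the argument the paper intends (the paper omits the proof, stating only that the corollary follows by using Proposition \ref{thres2}): a double containment where each direction invokes Proposition \ref{thres2} to translate between thick and resolving closures, together with the observation that $\CM(R)$ is itself resolving so that $\res M\subseteq\CM(R)$. Both your auxiliary claims—that a thick subcategory of $\CM(R)$ containing $R$ and $M$ contains $R\oplus M$, and that a resolving subcategory containing $M$ contains $R\oplus M$—are immediate, so the argument is complete.
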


Next, we want to classify extension-closed subcategories of $\mod R$ when $R$ is an artinian hypersurface.
For this, we establish a lemma.

\begin{lem}\label{arseq}
Let $S$ be a discrete valuation ring with maximal ideal $(x)$.
Let $R=S/(x^n)$ with $n\ge 1$.
Then for each integer $1\le i\le n-1$ there exists an exact sequence
\begin{equation}\label{i-1+1}
0 \to R/(x^i) \overset{f}{\to} R/(x^{i-1})\oplus R/(x^{i+1}) \overset{g}{\to} R/(x^i) \to 0
\end{equation}
of $R$-modules, where $x^0:=1$.
\end{lem}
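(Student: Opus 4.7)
The plan is to construct $f$ and $g$ from the natural projections and multiplication-by-$x$ maps, then check exactness by lifting to $S$ and using that $x$ is a non-zero-divisor there. Explicitly, I would define $f(\overline{a}) = (\overline{a},\,\overline{xa})$, where the first component is the surjection $R/(x^i)\twoheadrightarrow R/(x^{i-1})$ (well-defined since $(x^i)\subseteq(x^{i-1})$) and the second is induced by multiplication by $x$ on $R$ (which sends $(x^i)$ into $(x^{i+1})$). Dually, I would define $g(\overline{b},\overline{c}) = \overline{xb}-\overline{c}$, where the first term uses that multiplication by $x$ induces a well-defined map $R/(x^{i-1})\to R/(x^i)$, and $\overline{c}$ is the projection $R/(x^{i+1})\twoheadrightarrow R/(x^i)$. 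A direct check gives $g\circ f=0$, so this is at least a complex.

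The main verifications are injectivity of $f$, surjectivity of $g$, and the containment $\ker g\subseteq\operatorname{Im} f$. Surjectivity is trivial: for $\overline{d}\in R/(x^i)$, one has $g(\overline{0},\overline{-d})=\overline{d}$. For the other two, my plan is to pick representatives in $S$ and use cancellation. If $f(\overline{a})=0$, a lift $a\in S$ satisfies $a\in (x^{i-1})$ and $xa\in (x^{i+1})$ in $S$ (the ideal $(x^n)$ is absorbed since $i-1<n$ and $i+1\le n$). Writing $a=x^{i-1}b$ yields $x^i b\in(x^{i+1})$; canceling $x^i$ in the domain $S$ gives $b\in(x)$, whence $a\in(x^i)$, so $\overline{a}=0$ in $R/(x^i)$. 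For exactness at the middle, suppose $g(\overline{b},\overline{c})=0$ with lifts $b,c\in S$; then $xb-c\in(x^i)+(x^n)=(x^i)$ in $S$, so $xb-c=x^i t$ for some $t\in S$. Setting $a=b-x^{i-1}t$, one has $a\equiv b\pmod{x^{i-1}}$ and $xa=xb-x^i t=c$ in $S$, hence $\overline{xa}=\overline{c}$ in $R/(x^{i+1})$. Therefore $f(\overline{a})=(\overline{b},\overline{c})$.

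The boundary case $i=1$ is consistent with the convention $R/(x^0)=0$: the sequence collapses to the standard short exact sequence $0\to R/(x)\xrightarrow{\cdot x} R/(x^2)\to R/(x)\to 0$. I do not foresee any real obstacle; the proof is essentially bookkeeping made effective by the non-zero-divisor status of $x$ in the DVR $S$, which lets one cancel powers of $x$ whenever needed.
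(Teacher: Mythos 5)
Your maps $f(\overline a)=(\overline a,\overline{xa})$ and $g(\overline b,\overline c)=\overline{xb-c}$ are exactly the ones the paper gives (it writes $f(\overline a)=\binom{\overline a}{\overline{ax}}$ and $g\binom{\overline a}{\overline b}=\overline{ax-b}$), and your exactness verification by lifting to $S$ and cancelling powers of $x$ is the routine check the paper leaves to the reader. So the proposal is correct and follows the same approach as the paper, only spelled out in more detail.
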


\begin{proof}
Existence of the exact sequences \eqref{i-1+1} is well known to experts; they are {\em Auslander-Reiten sequences} for $\mod R$.
One can directly show the existence, defining the maps $f,g$ by $f(\overline a)=\binom{\overline a}{\overline{ax}}$ and $g(\binom{\overline a}{\overline b})=\overline{ax-b}$, where $\overline{(\cdot)}$ denotes the residue class.
\end{proof}

Now we can give a classification result of extension-closed subcategories of modules over an artinian hypersurface.
There exist only four such subcategories.

\begin{prop}\label{arthyp}
Let $R$ be an artinian hypersurface.
Then all extension-closed subcategories of $\mod R$ are the empty subcategory, the zero subcategory, $\add R$ and $\mod R$.
\end{prop}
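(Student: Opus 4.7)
The plan is to exploit the very simple module theory of an artinian hypersurface. Since $R$ is artinian and a hypersurface, by Remark~\ref{hyprem} we may write $R \cong S/(f)$ for a regular local ring $S$ and some $f \in S$. Because $\dim R = 0$, either $S$ is a field (and $R$ is itself a field), or $\dim S = 1$ so that $S$ is a DVR with uniformizer $x$ and $f$ is an associate of $x^n$ for some $n \geq 1$, giving $R \cong S/(x^n)$. In this latter setup the indecomposable $R$-modules are exactly $R/(x^i)$ for $i = 1, \ldots, n$, and every finitely generated $R$-module is a finite direct sum of these.

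Let $\X$ be an extension-closed subcategory of $\mod R$. I would first dispose of the easy case where $\X$ contains no non-free module, so that $\X \subseteq \add R$. If $\X = \emptyset$ or $\X = \{0\}$ we are done. Otherwise $\X$ contains a nonzero free module, hence by closure under direct summands $R \in \X$; split exact sequences of the form $0 \to R^a \to R^{a+b} \to R^b \to 0$ are extensions, so closure under extensions forces $R^m \in \X$ for every $m \geq 0$, giving $\X = \add R$.

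The nontrivial case is when $\X$ contains some non-free module, which in turn must have some indecomposable summand $R/(x^i)$ with $1 \leq i \leq n-1$; in particular $n \geq 2$. Here I would invoke Lemma~\ref{arseq}: closure under extensions applied to the sequence \eqref{i-1+1} gives $R/(x^{i-1}) \oplus R/(x^{i+1}) \in \X$, and closure under direct summands then pulls out both $R/(x^{i-1})$ and $R/(x^{i+1})$ (where $R/(x^0) = 0$). Iterating this propagation upward from $i$ successively to $i+1, i+2, \ldots, n-1$ eventually yields $R = R/(x^n) \in \X$, and iterating downward yields every $R/(x^j)$ for $1 \leq j \leq i$. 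Combined with closure under direct sums (via split extensions), every indecomposable $R$-module, and therefore every finitely generated $R$-module, lies in $\X$, so $\X = \mod R$.

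The main obstacle is purely a bookkeeping one, namely writing the iterated use of Lemma~\ref{arseq} cleanly and taking care of the boundary convention $R/(x^0) = 0$; there is no conceptual difficulty once one has the AR sequences of the lemma in hand, since the whole argument rests on the fact that those sequences let one walk between adjacent indecomposables in both directions.
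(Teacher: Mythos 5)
Your proof is correct and follows essentially the same route as the paper: reduce to $R \cong S/(x^n)$ for a DVR $S$, use the structure theorem to identify the indecomposables as $R/(x^i)$, and propagate membership in $\X$ in both directions via the Auslander--Reiten sequences of Lemma~\ref{arseq}. The only cosmetic difference is that you spell out the degenerate cases ($\X$ containing no nonfree module, $S$ a field) a bit more explicitly than the paper does.
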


\begin{proof}
It is easily seen that there exist a discrete valuation ring $S$ with maximal ideal $(x)$ and a positive integer $n$ such that $R$ is isomorphic to $S/(x^n)$ (cf. Remark \ref{hyprem}).
Applying to $S$ the structure theorem for finitely generated modules over a principal ideal domain, we can show that $\mod R=\add_R(R\oplus R/(x)\oplus R/(x^2)\oplus\cdots\oplus R/(x^{n-1}))$.
Suppose that $\X$ is not any of the empty subcategory, the zero category and $\add R$.
Then, since $\X$ is closed under direct summands, it contains $R/(x^l)$ for some $1\le l\le n-1$.
Lemma \ref{arseq} says that $\X$ contains both $R/(x^{l-1})$ and $R/(x^{l+1})$.
An inductive argument implies that $\X$ contains $R/(x), R/(x^2), \dots, R/(x^{n-1}), R/(x^n)=R$.
Hence $\X$ coincides with $\mod R$.
\end{proof}

We state here two lemmas.
The first one is easy.

\begin{lem}\label{sop}
Let $R$ be an abstract hypersurface.
Then there exists a system of parameters $\xx$ of $R$ such that $R/(\xx)$ is an artinian hypersurface.
\end{lem}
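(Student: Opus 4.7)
Plan: The plan splits into two cases depending on whether $R$ is regular. If $R$ is regular, then any system of parameters $\xx$ of $R$ yields $R/(\xx)\cong k$, which is trivially an artinian hypersurface. So assume $R$ is singular; combined with $R$ being an abstract hypersurface, this forces $\edim R = d+1$, since in any presentation $\widehat R = S/(f)$ of the completion, $f$ must then lie in $\m_S^2$.

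The key observation is that any system of parameters $\xx$ of $R$ whose images are $k$-linearly independent in $\m/\m^2$ does the job. Indeed, $R/(\xx)$ is then artinian with
\[
\edim(R/(\xx)) \;=\; \dim_k \m/(\m^2 + (\xx)) \;=\; \edim R - d \;=\; 1,
\]
so its maximal ideal is principal. By the Cohen structure theorem, an artinian local ring with at most one-dimensional cotangent space is a quotient of a field or of a complete discrete valuation ring by a single element, and hence is an artinian hypersurface.

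It remains to construct such a system of parameters. Proceed inductively: at step $i$, set $R_i = R/(x_1, \ldots, x_{i-1})$, which has dimension $d - i + 1 \ge 1$. Neither $\m_{R_i}^2$ (by Nakayama's lemma) nor any of the finitely many minimal primes of $R_i$ (since $R_i$ has positive dimension) coincides with $\m_{R_i}$, so by an enhanced form of prime avoidance that admits one non-prime ideal in the exclusion list, some element of $\m_{R_i}$ lies outside the union of all of them. Lifting this to $x_i \in R$ guarantees both that $\bar{x}_i \notin \mathrm{span}_k(\bar{x}_1, \ldots, \bar{x}_{i-1})$ in $\m/\m^2$ (because its image in $R_i$ avoids $\m_{R_i}^2$) and that $\dim R/(x_1, \ldots, x_i) = d-i$ (because it avoids each minimal prime of $R_i$). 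After $d$ iterations we obtain the desired $\xx$.

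The only technical point worth remarking is the handling of possibly finite residue fields, where a naive linear-algebraic argument producing an element of $\m\setminus\m^2$ that avoids the minimal primes is inadequate; the enhanced prime avoidance lemma absorbs this difficulty uniformly, and the rest of the proof is routine.
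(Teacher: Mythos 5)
Your proof is correct. The paper states this lemma without proof (``The first one is easy''), so there is no argument in the paper to compare against; your reasoning supplies a complete justification.

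A few observations. Your core idea---choose a system of parameters whose images in $\m/\m^2$ are linearly independent, so that the quotient has embedding dimension $\edim R - d \leq 1$ and is therefore a hypersurface by the Cohen structure theorem---is exactly the right one, and the invocation of prime avoidance allowing one non-prime ideal is what makes the construction work over finite residue fields. Two small simplifications are available, though they do not change the substance. First, the case split between $R$ regular and $R$ singular is unnecessary: in both cases $\edim R \leq d+1$, so the same computation yields $\edim(R/(\xx)) \leq 1$ and the conclusion follows uniformly. Second, since an abstract hypersurface is Cohen-Macaulay, $\Ass(R_i) = \Min(R_i)$ at each stage, so instead of arguing that $x_i$ avoids the minimal primes in order to drop the dimension you could equivalently say you are choosing a maximal $R$-regular sequence outside $\m^2$; in a Cohen-Macaulay local ring of dimension $d$ such a sequence automatically has length $d$ and is a system of parameters. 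Either phrasing requires the same prime avoidance input, so this is cosmetic rather than a genuine shortcut.

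One point worth making explicit, which you gesture at but do not spell out: the equality $\edim R \leq d+1$ for an abstract hypersurface uses that $\edim R = \edim \widehat R$ and $\dim R = \dim \widehat R$, together with the fact that in a presentation $\widehat R = S/(f)$ with $S$ a complete regular local ring and $f \neq 0$ a non-unit, the ring $S$ is a domain, so $(f)$ has height one and $\dim \widehat R = \dim S - 1 \geq \edim \widehat R - 1$. Your remark that $f \in \m_S^2$ in the singular case is correct, but all you actually need for the bound is $f \in \m_S$.
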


\begin{lem}\label{resom}
Let $X$ be an $R$-module.
Let $x$ be an $R$- and $X$-regular element of $R$.
If $M$ belongs to $\res_{R/(x)}X/xX$, then $\Omega_RM$ belongs to $\res_RX$.
\end{lem}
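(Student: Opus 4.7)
The plan is to show that the subcategory $\Y := \{N \in \mod R/(x) : \Omega_R N \in \res_R X\}$ of $\mod R/(x)$ is resolving and contains $X/xX$; minimality of $\res_{R/(x)}(X/xX)$ as a resolving subcategory of $\mod R/(x)$ containing $X/xX$ then forces $\res_{R/(x)}(X/xX) \subseteq \Y$, which is precisely the claim.

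First I would verify that the generating objects of a resolving subcategory over $R/(x)$ belong to $\Y$. For $R/(x)$ itself: since $x$ is $R$-regular, $\Omega_R(R/(x)) \cong (x) \cong R$ is free, hence lies in $\res_R X$. For $X/xX$: since $x \in \m$, the minimal numbers of generators of $X$ and $X/xX$ over $R$ coincide, so the same free module serves as a minimal $R$-free cover of both; combining this observation with the exact sequence $0 \to X \xrightarrow{x} X \to X/xX \to 0$ (exact because $x$ is $X$-regular) yields
\[
0 \to \Omega_R X \to \Omega_R(X/xX) \to X \to 0.
\]
Both ends lie in $\res_R X$, so extension-closedness places $\Omega_R(X/xX)$ there as well.

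Closure of $\Y$ under direct summands is immediate from the fact that minimal syzygies respect direct sums and $\res_R X$ is summand-closed. Closure under extensions uses the horseshoe lemma: a short exact sequence $0 \to A \to B \to C \to 0$ of $R/(x)$-modules, regarded as an exact sequence over $R$, produces $0 \to \Omega_R A \to \Omega_R B \oplus F \to \Omega_R C \to 0$ with $F$ free, from which $\Omega_R A, \Omega_R C \in \res_R X$ forces $\Omega_R B \in \res_R X$ by the resolving properties of $\res_R X$.

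The main obstacle is closure of $\Y$ under syzygies over $R/(x)$; this hinges on comparing $\Omega_R N$ and $\Omega_{R/(x)} N$ for $N \in \Y$. Because $x \in \m$, the minimal numbers of generators of $N$ over $R$ and $R/(x)$ coincide, say both equal $n$, and unwinding the definitions gives an exact sequence
\[
0 \to R^{\oplus n} \to \Omega_R N \to \Omega_{R/(x)} N \to 0,
\]
the left term arising as $\ker(R^{\oplus n} \twoheadrightarrow (R/(x))^{\oplus n}) = xR^{\oplus n} \cong R^{\oplus n}$. A second application of the horseshoe lemma to this sequence, together with $\Omega_R(R^{\oplus n})=0$, yields $\Omega_R \Omega_{R/(x)} N \cong \Omega_R^2 N \oplus F'$ for some free $F'$. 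If $N \in \Y$, then $\Omega_R N \in \res_R X$, so $\Omega_R^2 N \in \res_R X$ by closure under syzygies, so $\Omega_R \Omega_{R/(x)} N \in \res_R X$, proving $\Omega_{R/(x)} N \in \Y$. Proposition \ref{resdef} then confirms $\Y$ is resolving, completing the argument.
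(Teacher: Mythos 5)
Your proof is correct, but it is organized genuinely differently from the paper's. The paper argues by induction on the filtration level $n$ for which $M \in \res_{R/(x)}^n(X/xX)$: the base case uses Lemma \ref{zurasi} to put $\Omega_R(X/xX)$ in $\res_R X$, and the inductive step applies $\Omega_R$ to the extension and kernel-of-epi sequences defining $\res^n$ and feeds the result back into the resolving properties of $\res_R X$. You instead take the top-down route: declare $\Y = \{N \in \mod R/(x) : \Omega_R N \in \res_R X\}$, verify it is a resolving subcategory of $\mod R/(x)$ containing $X/xX$, and conclude by minimality of the resolving closure. The computational tools (horseshoe lemma, behaviour of minimal syzygies under local rings) are the same, but your argument isolates the closure verification cleanly and avoids tracking layers. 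One stylistic point: for the closure-under-syzygies step you build the comparison sequence $0 \to R^{\oplus n} \to \Omega_R N \to \Omega_{R/(x)}N \to 0$ and apply the horseshoe again; this is correct, but it would be slightly shorter to verify closure of $\Y$ under kernels of epimorphisms directly (exactly as you did for extensions), since for a short exact sequence $0 \to L \to B \to C \to 0$ in $\mod R/(x)$ the horseshoe gives $0 \to \Omega_R L \to \Omega_R B \oplus F \to \Omega_R C \to 0$ and then $\Omega_R L \in \res_R X$ follows at once from the kernel-of-epi closure of $\res_R X$, with no need to compare $\Omega_R$- and $\Omega_{R/(x)}$-syzygies.
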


\begin{proof}
The $R/(x)$-module $M$ is in $\res_{R/(x)}^nX/xX$ for some integer $n\ge 0$.
Let us prove by induction on $n$ that $\Omega_RM$ is in $\res_RX$.

When $n=0$, by definition $M$ is in $\add_{R/(x)}(X/xX\oplus R/(x))$, and $\Omega_RM$ is in $\add_R(\Omega_R(X/xX)\oplus R)$.
Applying Lemma \ref{zurasi} to the exact sequence $0 \to X \overset{x}{\to} X \to X/xX \to 0$, we obtain an exact sequence $0 \to \Omega_R(X/xX) \to X\oplus F \to X \to 0$ with $F$ free.
Since $\res_RX$ is resolving, $\Omega_R(X/xX)$ is in $\res_RX$, and therefore $\Omega_RM$ is in $\res_RX$.

When $n\ge 1$, there exist exact sequences of $R/(x)$-modules
\begin{align*}
0 \to S_i \to Y_i \to T_i \to 0 & \quad (1\le i\le p), \\
0 \to Y_i \to S_i \to T_i \to 0 & \quad (p+1\le i\le q)
\end{align*}
with $S_i,T_i\in\res_{R/(x)}^{n-1}(X/xX)$ for $1\le i\le q$ such that $M$ is a direct summand of $Y_1\oplus\cdots\oplus Y_q$.
Then there are exact sequences of $R$-modules
\begin{align*}
0 \to \Omega_RS_i \to \Omega_RY_i\oplus F_i \to \Omega_RT_i \to 0 & \quad (1\le i\le p), \\
0 \to \Omega_RY_i \to \Omega_RS_i\oplus F_i \to \Omega_RT_i \to 0 & \quad (p+1\le i\le q)
\end{align*}
with each $F_i$ free, and the induction hypothesis implies that $\Omega_RS_i$ and $\Omega_RT_i$ are in $\res_RX$ for $1\le i\le q$.
Since $\res X$ is resolving, $\Omega_RY_i$ is in $\res_RX$ for $1\le i\le q$, and so is $\Omega_RM$.
\end{proof}

Now we can show the following proposition.

\begin{prop}\label{hypkey}
Let $R$ be an abstract hypersurface, and let $\X$ be a resolving subcategory of $\mod R$ contained in $\CM(R)$.
Then $\kappa(\p)$ belongs to $\widetilde{\add_{R_\p}\X_\p}$ for every $\p\in\V_R(\X)$.
\end{prop}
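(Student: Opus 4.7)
Set $A=R_\p$ and $\Y=\add_A\X_\p$. By Lemma \ref{addres}, $\Y$ is a resolving subcategory of $\mod A$, and it is contained in $\CM(A)$. Since $R$ is an abstract hypersurface, so is $R_\p$; indeed, $R_\p$ is either regular or an abstract hypersurface, and regularity is ruled out because $\p\in\V_R(\X)\subseteq\Sing R$. Choose $X\in\X$ with $Y:=X_\p$ non-free over $A$; the goal is to show $\kappa(\p)\in\widetilde{\Y}$.

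By Lemma \ref{sop} applied to $A$, pick a system of parameters $\xx=x_1,\dots,x_d$ of $A$ such that $A/(\xx)$ is an artinian hypersurface. Since $Y\in\CM(A)$, the sequence $\xx$ is $Y$-regular, so $\overline Y:=Y/\xx Y$ is a nonzero $A/(\xx)$-module. Moreover $\overline Y$ is non-free over $A/(\xx)$: the standard equality $\pd_AY=\pd_{A/(\xx)}\overline Y$ (which holds because $\xx$ is both $A$- and $Y$-regular) would otherwise force $Y$ to be $A$-free, contradicting the choice of $Y$. By Proposition \ref{arthyp} the only extension-closed subcategories of $\mod A/(\xx)$ are the empty subcategory, the zero subcategory, $\add(A/(\xx))$, and $\mod A/(\xx)$; the subcategory generated by $\overline Y$ is none of the first three, so it coincides with $\mod A/(\xx)$. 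In particular $\kappa(\p)\in\ext_{A/(\xx)}(\overline Y)$.

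To transfer this conclusion to $\widetilde{\Y}$, I first observe by Lemma \ref{resome} (applied with $\X=\Y$) that $\widetilde{\Y}=\{M\in\mod A:\Omega_A^iM\in\Y\text{ for some }i\geq 0\}$, from which it is routine to verify that $\widetilde{\Y}$ is itself resolving in $\mod A$; in particular it is extension-closed and closed under direct summands. Next, the Koszul complex of $\xx$ with coefficients in $Y$,
\[
0\to Y\to Y^{\binom{d}{d-1}}\to\cdots\to Y^{\binom{d}{1}}\to Y\to\overline Y\to 0,
\]
is exact (since $\xx$ is $Y$-regular) with every term in $\add_AY\subseteq\Y$, so $\overline Y\in\widetilde{\Y}$ by the very definition of that subcategory. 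Since every short exact sequence and every direct-summand decomposition of $A/(\xx)$-modules is a fortiori one of $A$-modules, we obtain $\ext_{A/(\xx)}(\overline Y)\subseteq\ext_A(\overline Y)\subseteq\widetilde{\Y}$, and hence $\kappa(\p)\in\widetilde{\Y}$, as desired.

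The main obstacle is this last transfer: one must combine the extreme rigidity of extension-closed subcategories over an artinian hypersurface (Proposition \ref{arthyp}) with the short Koszul resolution of $\overline Y$ by $\Y$-modules, through the (easy but essential) observation that $\widetilde{\Y}$ is resolving. Without that observation there is no immediate route from ``$\kappa(\p)$ is built from $\overline Y$ by finitely many extensions in $\mod A/(\xx)$'' to ``$\kappa(\p)$ has a finite $\Y$-resolution over $A$''.
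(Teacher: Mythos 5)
Your argument is correct, and it reaches the conclusion by a genuinely different route from the paper's.  Both proofs begin the same way: reduce to the local ring $A=R_\p$ (an abstract hypersurface), cut down by a system of parameters $\xx$ so that $A/(\xx)$ is an artinian hypersurface, observe via Proposition \ref{arthyp} that the non-free module $\overline Y=X_\p/\xx X_\p$ generates all of $\mod A/(\xx)$, hence $\kappa(\p)$ is reached.  The difference is in how one lifts this back to $A$.  The paper's mechanism is Lemma \ref{resom}, applied once for each $x_i$: this walks $\kappa(\p)$ up through the intermediate quotients $A/(x_1,\dots,x_i)$, taking one syzygy per step, and lands on $\Omega_A^n\kappa(\p)\in\res_A X_\p\subseteq\add_A\X_\p$, at which point the syzygy characterization of $\widetilde{(\cdot)}$ finishes.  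You instead bypass Lemma \ref{resom} entirely: you put $\overline Y$ into $\widetilde{\Y}$ in one stroke via the Koszul resolution $K_\bullet(\xx;Y)\to\overline Y$ (all of whose terms lie in $\Y$ because $Y\in\Y$ and $\Y$ is additively closed), verify that $\widetilde{\Y}=\{M:\Omega^i_A M\in\Y\text{ for some }i\}$ is resolving (hence extension-closed), and then exploit that extensions in $\mod A/(\xx)$ are in particular extensions in $\mod A$ to get $\ext_{A/(\xx)}(\overline Y)\subseteq\widetilde{\Y}$.  Your route trades the inductive bookkeeping over intermediate rings (and the implicit comparison of $\Omega_A\Omega_{A/(x)}$ with $\Omega_A^2$ up to free summands that underlies the paper's iteration) for the single, easily checked observation that $\widetilde{\Y}$ is resolving; it is arguably a cleaner packaging of the same idea.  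One could tighten the exposition in two small spots: the parenthetical ``$R_\p$ is either regular or an abstract hypersurface, and regularity is ruled out'' is unnecessary, since a regular local ring is itself an abstract hypersurface (take $f=0$), and in any case non-regularity of $R_\p$ is already forced by the existence of the non-free CM module $X_\p$ via Auslander--Buchsbaum; and when invoking Proposition \ref{arthyp} you should note that the extension-closed subcategory containing $\overline Y$ is a fortiori nonempty, which you do implicitly.  Neither affects correctness.
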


\begin{proof}
Let $\p$ be a prime ideal in $\V_R(\X)$.
Then there exists an $R$-module $X\in\X$ such that $X_\p$ is a nonfree $R_\p$-module.
Note that $R_\p$ is an abstract hypersurface by \cite[Propositions 4.2.4(1) and 4.2.5(1); Theorem 5.1.1; Remark 8.1.1(3)]{Av2}.
Lemma \ref{sop} gives a system of parameters $\xx=x_1,\dots,x_n$ of $R_\p$ such that $R_\p/(\xx)$ is an artinian hypersurface.
Proposition \ref{arthyp} implies that all resolving subcategories of $\mod R_\p/(\xx)$ are the empty subcategory, the zero subcategory, $\add_{R_\p/(\xx)}R_\p/(\xx)$ and $\mod R_\p/(\xx)$.
Since $\X$ is contained in $\CM(R)$, the $R$-module $X$ is Cohen-Macaulay, and so is the $R_\p$-module $X_\p$.
Hence $\xx$ is an $R_\p$- and $X_\p$-sequence, and we see from \cite[Lemma 1.3.5]{BH} that $X_\p/\xx X_\p$ is a nonfree $R_\p/(\xx)$-module.
Therefore $\res_{R_\p/(\xx)}X_\p/\xx X_\p$ is a resolving subcategory of $\mod R_\p/(\xx)$ which is different from any of the empty subcategory, the zero subcategory and $\add_{R_\p/(\xx)}R_\p/(\xx)$, and thus it must coincide with $\mod R_\p/(\xx)$.
In particular, $\res_{R_\p/(\xx)}X_\p/\xx X_\p$ contains $\kappa(\p)$.
Applying Lemma \ref{resom} repeatedly, we see that $\Omega_{R_\p}^n\kappa(\p)$ belongs to $\res_{R_\p}X_\p$, which is contained in $\add_{R_\p}\X_\p$ by Lemma \ref{addres}.
We conclude that $\kappa(\p)$ belongs to $\widetilde{\add_{R_\p}\X_\p}$.
\end{proof}

Combining Theorem \ref{origin} and Proposition \ref{hypkey}, we obtain the following theorem which is a main result of this section.

\begin{thm}\label{vhyp}
Let $R$ be an abstract hypersurface.
Then one has the following one-to-one correspondence:
$$
\begin{CD}
\left\{
\begin{matrix}
\text{thick subcategories of }\CM(R)\\
\text{containing }R
\end{matrix}
\right\}
\begin{matrix}
@>{\V}>>\\
@<<{\V_{\CM}^{-1}}<
\end{matrix}
\left\{
\begin{matrix}
\text{specialization-closed subsets of }\Spec R\\
\text{contained in }\Sing R
\end{matrix}
\right\}.
\end{CD}
$$
\end{thm}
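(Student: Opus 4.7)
The plan is to derive Theorem \ref{vhyp} as a specialization of the already-proved Theorem \ref{origin}, by showing that, for an abstract hypersurface $R$, every hypothesis entering on the subcategory side of Theorem \ref{origin} becomes automatic or vacuous. In particular I want to check three things: (a) that $R$ admits a canonical module equal to $R$ itself, so the requirement ``$\X$ contains $\omega$'' is the same as ``$\X$ contains $R$''; (b) that the non-Gorenstein locus $\ng R$ is empty, so the bound $\ng R\subseteq\Phi\subseteq\Sing R$ reduces to $\Phi\subseteq\Sing R$; and (c) that the technical side condition $\kappa(\p)\in\widetilde{\add_{R_\p}\X_\p}$ for $\p\in\V(\X)$ holds automatically for thick subcategories of $\CM(R)$ containing $R$ over an abstract hypersurface.

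For (a) and (b), I would use that if $\widehat R\cong S/(f)$ with $S$ a complete regular local ring, then $\widehat R$ is a complete intersection, hence Gorenstein, and therefore $R$ itself is Gorenstein. So $R$ is its own canonical module and $\ng R=\emptyset$. This collapses the left-hand side of Theorem \ref{origin} to the set of thick subcategories of $\CM(R)$ containing $R$ for which the condition on $\kappa(\p)$ still holds, and collapses the right-hand side to the specialization-closed subsets of $\Spec R$ contained in $\Sing R$.

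For (c), I would invoke Proposition \ref{thres2}, which says that over an abstract hypersurface, a subcategory of $\mod R$ is a thick subcategory of $\CM(R)$ containing $R$ if and only if it is a resolving subcategory of $\mod R$ contained in $\CM(R)$. Once $\X$ is recognized as such a resolving subcategory, Proposition \ref{hypkey} gives precisely that $\kappa(\p)\in\widetilde{\add_{R_\p}\X_\p}$ for every $\p\in\V(\X)$. Thus the side condition from Theorem \ref{origin} is automatic, and the correspondence there specializes exactly to the correspondence asserted in Theorem \ref{vhyp}.

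The only thing that required real work was the verification in (c), but that is already handled by Proposition \ref{hypkey}, whose proof uses the classification over artinian hypersurfaces (Proposition \ref{arthyp}), the passage from $R_\p$ to its artinian quotient $R_\p/(\xx)$ by a regular system of parameters (Lemma \ref{sop}), and Lemma \ref{resom} to lift the resulting resolving-closure statement back through syzygies. So the only step to spell out in the proof itself is the short bookkeeping argument just sketched: reduce to Theorem \ref{origin} via Proposition \ref{thres2}, and apply Proposition \ref{hypkey}. No new hard step should be needed; the main conceptual content already sits in those earlier results.
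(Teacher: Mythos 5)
Your proposal is correct and follows the paper's own route: Theorem \ref{vhyp} is obtained directly by combining Theorem \ref{origin} with Proposition \ref{hypkey}, using Proposition \ref{thres2} to pass between thick subcategories of $\CM(R)$ containing $R$ and resolving subcategories of $\mod R$ contained in $\CM(R)$. The paper leaves the bookkeeping in your points (a) and (b) implicit, but the substantive step is exactly your (c).
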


The assumption that $R$ is an abstract hypersurface in the above theorem looks too strong, but as the proposition below says, for a Cohen-Macaulay local ring having a module of complexity one, the converse of the assertion of the theorem holds.

\begin{prop}
Let $R$ be a Cohen-Macaulay local ring.
Assume there is an $R$-module of complexity one, namely, a bounded $R$-module of infinite projective dimension.
If one has the one-to-one correspondence in Theorem \ref{vhyp}, then $R$ is an abstract hypersurface.
\end{prop}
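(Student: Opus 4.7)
The strategy is to apply the assumed bijection to the thick subcategory of \emph{bounded} Cohen-Macaulay modules. The hypothesis that a module of complexity one exists guarantees this subcategory is nontrivial, and the bijection will force $\Omega^d k$ to be bounded; a standard characterization of abstract hypersurfaces via the complexity of the residue field will then finish the argument.

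More precisely, starting from a bounded $R$-module $Y$ of infinite projective dimension, I would replace it by $X := \Omega^d Y$, which is Cohen-Macaulay, still has bounded Betti numbers, and still has infinite projective dimension. Let $\X$ denote the subcategory of $\CM(R)$ consisting of all bounded Cohen-Macaulay modules; by Example \ref{thex}(7) it is a thick subcategory containing $R$. Set $\Phi := \V(\X)$, which by Proposition \ref{nfsupp} is a specialization-closed subset of $\Spec R$ contained in $\Sing R$. The assumed correspondence in Theorem \ref{vhyp} then yields $\X = \V_{\CM}^{-1}(\Phi)$.

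Since $R$ is local and $X$ is nonfree, $\m \in \V(X) \subseteq \Phi$, so Propositions \ref{variety}(6) and \ref{variety}(3) give $R/\m = k \in \V^{-1}(\Phi)$ and hence $\Omega^d k \in \V^{-1}(\Phi)$. Because $\Omega^d k$ is Cohen-Macaulay, this puts $\Omega^d k$ inside $\V_{\CM}^{-1}(\Phi) = \X$, which means $\Omega^d k$ has bounded Betti numbers; equivalently, $k$ has complexity at most one over $R$.

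Finally, I would invoke the theorem of Gulliksen that a residue field of finite complexity forces $R$ to be a complete intersection, with the complexity of $k$ equal to the codimension $\edim R - \dim R$. Hence $\widehat R$ has codimension at most one, so it is either regular or an abstract hypersurface. The existence of the module $Y$ of infinite projective dimension rules out the regular case (every module over a regular local ring has finite projective dimension), so $\widehat R$, and thus $R$, is an abstract hypersurface. The main obstacle is conceptual rather than technical: one must recognize that the subcategory of bounded Cohen-Macaulay modules is precisely the right probe, as it is strictly larger than $\add R$ exactly when a complexity-one module exists, and the bijection then cleanly transports this behavior to $\Omega^d k$.
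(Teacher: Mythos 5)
Your proposal is correct and follows essentially the same route as the paper: probe the assumed bijection with the thick subcategory $\X$ of bounded Cohen-Macaulay modules, observe that $\Omega^d Y$ is a nonfree object of $\X$ so $\m\in\V(\X)$, deduce $\Omega^d k\in\V_{\CM}^{-1}(\V(\X))=\X$, and conclude that $k$ is bounded. The only difference is the final citation: the paper invokes \cite[Remark~8.1.1(3)]{Av2} directly to pass from ``$k$ bounded'' to ``$R$ is an abstract hypersurface,'' whereas you re-derive this through Gulliksen's theorem (finite complexity of $k$ forces a complete intersection, with $\operatorname{cx}_R k$ equal to the codimension) plus the observation that a module of infinite projective dimension excludes the regular case; both are standard and equivalent here.
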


\begin{proof}
Let $\X$ be the subcategory of $\CM(R)$ consisting of all bounded Cohen-Macaulay $R$-modules.
By Example \ref{thex}(7), $\X$ is a thick subcategory of $\CM(R)$ containing $R$.
Let $M$ be a bounded $R$-module of infinite projective dimension.
Then $\Omega^dM$ is a nonfree bounded Cohen-Macaulay $R$-module.
Hence $\Omega^dM$ belongs to $\X$, and $\m$ is in $\V(\Omega^dM)$.
It follows that $\m$ is in $\V(\X)$, and we get $\V(\Omega^dk)\subseteq\{\m\}\subseteq\V(\X)$, hence $\Omega^dk$ belongs to $\V_{\CM}^{-1}(\V(\X))$.
The one-to-one correspondence in Theorem \ref{vhyp} implies $\V_{\CM}^{-1}(\V(\X))=\X$.
Thus $k$ is a bounded $R$-module.
By \cite[Remark 8.1.1(3)]{Av2}, the ring $R$ is an abstract hypersurface.
\end{proof}

Next, we consider a Cohen-Macaulay local ring which is an abstract hypersurface on the punctured spectrum.

\begin{prop}\label{lch}
Let $R$ be a Cohen-Macaulay local ring such that $R_\p$ is an abstract hypersurface for every nonmaximal prime ideal $\p$ of $R$.
Let $\X$ be a resolving subcategory of $\mod R$ which contains $\Omega^dk$ and is contained in $\CM(R)$.
Let $M$ be a Cohen-Macaulay $R$-module.
If $\V_R(M)$ is contained in $\V_R(\X)$, then $M$ belongs to $\X$.
\end{prop}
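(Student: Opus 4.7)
The plan is to imitate the inductive proof of Proposition \ref{thickcm}, running induction on $\dim\V_R(M)$. The key replacement is that instead of assuming an approximability condition on residue fields, one deduces the needed local statement from the hypersurface classification (Theorem \ref{vhyp}) at each nonmaximal prime; this is precisely where the locally-hypersurface hypothesis on $R$ is used. The base cases proceed as in Proposition \ref{thickcm}: when $\V_R(M)=\emptyset$ the module $M$ is free, hence lies in the resolving subcategory $\X$; when $\dim\V_R(M)=0$ we have $\V_R(M)=\{\m\}$, so $M$ is free on the punctured spectrum, Corollary \ref{kore} gives $M\in\ext(\Omega^dk)$, and this extension closure lies inside $\X$ since $\X$ is extension-closed and contains $\Omega^dk$.

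For the inductive step, assume $\dim\V_R(M)\ge 1$. The nonfree locus $\V_R(M)$ is Zariski-closed by Proposition \ref{nfsupp}(2), so $\min\V_R(M)$ is finite; and since $\V_R(M)$ is specialization-closed and has positive dimension, no $\p\in\min\V_R(M)$ can equal $\m$. The central local claim is that $M_\p\in\add_{R_\p}\X_\p$ for every such $\p$. By Lemma \ref{addres}, $\add_{R_\p}\X_\p$ is a resolving subcategory of $\mod R_\p$, and it is contained in $\CM(R_\p)$ because localization at $\p$ preserves maximal Cohen-Macaulayness. Since $R_\p$ is an abstract hypersurface, Proposition \ref{thres2} identifies $\add_{R_\p}\X_\p$ with a thick subcategory of $\CM(R_\p)$ containing $R_\p$, and Theorem \ref{vhyp} then yields
$$
\add_{R_\p}\X_\p=\{\,T\in\CM(R_\p):\V_{R_\p}(T)\subseteq\V_{R_\p}(\add_{R_\p}\X_\p)\,\}.
$$
Since $M_\p\in\CM(R_\p)$ and localizing the hypothesis $\V_R(M)\subseteq\V_R(\X)$ at $\p$ gives $\V_{R_\p}(M_\p)\subseteq\V_{R_\p}(\X_\p)=\V_{R_\p}(\add_{R_\p}\X_\p)$, the local claim follows.

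With this in hand, apply Proposition \ref{lnx} with the finite set $\Phi=\min\V_R(M)$ to produce an exact sequence $0\to L\to N\to X\to 0$ in $\CM(R)$ with $X\in\X$, $M$ a direct summand of $N$, $\V_R(L)\subseteq\V_R(M)$ and $\V_R(L)\cap\min\V_R(M)=\emptyset$. These two conditions force every prime in $\V_R(L)$ to strictly contain some minimal prime of $\V_R(M)$, giving $\dim\V_R(L)<\dim\V_R(M)$. The induction hypothesis then delivers $L\in\X$, whence $N\in\X$ by extension-closure (part of being resolving) and $M\in\X$ as a direct summand, completing the induction. The main obstacle is this local step: identifying $\add_{R_\p}\X_\p$ via Theorem \ref{vhyp} over the hypersurface $R_\p$ is the sole place where the global hypothesis on $R$ is exploited, and it is what allows us to sidestep an approximability hypothesis on residue fields of the sort needed for Proposition \ref{thickcm}.
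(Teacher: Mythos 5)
Your proof matches the paper's argument essentially step for step: the same induction on $\dim\V_R(M)$, the same base cases via Corollary \ref{kore}, the same localization to $\p\in\min\V_R(M)$ where Lemma \ref{addres}, Proposition \ref{thres2}, and Theorem \ref{vhyp} identify $\add_{R_\p}\X_\p$ and show $M_\p$ belongs to it, and the same application of Proposition \ref{lnx} to drop the dimension and close the induction. The only additions (explicitly noting finiteness of $\min\V_R(M)$ via Zariski-closedness, and the localization of the inclusion $\V_R(M)\subseteq\V_R(\X)$) are small details the paper leaves implicit.
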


\begin{proof}
Let us prove the proposition by induction on $n:=\dim\V_R(M)$.

When $n=-\infty$, we have $\V_R(M)=\emptyset$.
This means that $M$ is free, and $M$ belongs to $\X$.

When $n=0$, the module $M$ is free on the punctured spectrum of $R$.
Corollary \ref{kore} implies $M$ is in $\ext(\Omega^dk)$.
Since $\X$ is resolving and contains $\Omega^dk$, the module $M$ is in $\X$.

Let $n>0$.
Fix a prime ideal $\p\in\min\V_R(M)$.
Then $\m$ is not in $\min\V_R(M)$, and so $\p$ is different from $\m$.
By assumption, the local ring $R_\p$ is an abstract hypersurface.
The set $\V_{R_\p}(M_\p)$ is contained in $\V_{R_\p}(\add_{R_\p}\X_\p)$.
Lemma \ref{addres} and Proposition \ref{thres2} say that $\add_{R_\p}\X_\p$ is a thick subcategory of $\CM(R_\p)$ containing $R_\p$.
By Theorem \ref{vhyp}, we have $\add_{R_\p}\X_\p=\V_{\CM}^{-1}(\V(\add_{R_\p}\X_\p))$, which contains $M_\p$.
Therefore, it follows from Proposition \ref{lnx} that there exists an exact sequence $0 \to L \to N \to X \to 0$ of Cohen-Macaulay $R$-modules such that $X$ belongs to $\X$, that $M$ is a direct summand of $N$, that $\V_R(L)$ is contained in $\V_R(M)$, and that $V_R(L)$ does not meet $\min\V_R(M)$.
Hence $L$ is a Cohen-Macaulay $R$-module with $\dim\V_R(L)<\dim\V_R(M)$ such that $\V_R(L)$ is contained in $\V_R(\X)$.
The induction hypothesis shows that $L$ belongs to $\X$.
It is seen from the above short exact sequence that $M$ also belongs to $\X$.
\end{proof}

Using the above proposition, let us prove the following classification theorem of resolving subcategories, which is a main result of this section.

\begin{thm}\label{lochp}
Let $R$ be a Cohen-Macaulay singular local ring such that $R_\p$ is an abstract hypersurface for every nonmaximal prime ideal $\p$ of $R$.
Then one has the following one-to-one correspondence:
$$
\begin{CD}
\left\{
\begin{matrix}
\text{resolving subcategories of }\\
\mod R\text{ contained in }\\
\CM(R)\text{ containing }\Omega^dk
\end{matrix}
\right\}
\begin{matrix}
@>{\V}>>\\
@<<{\V_{\CM}^{-1}}<
\end{matrix}
\left\{
\begin{matrix}
\text{nonempty specialization-closed}\\
\text{subsets of }\Spec R\\
\text{contained in }\Sing R
\end{matrix}
\right\}.
\end{CD}
$$
\end{thm}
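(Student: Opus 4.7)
The plan is to verify that the two assignments $\V$ and $\V_{\CM}^{-1}$ are mutually inverse bijections between the stated classes. The whole technical engine has already been constructed: Proposition \ref{lch} says that any Cohen-Macaulay module whose nonfree locus lies in $\V(\X)$ must itself lie in $\X$, so the substance of the theorem is already in hand and what remains is bookkeeping.

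First I would check that both maps land in the correct target. If $\X$ is a resolving subcategory of $\mod R$ contained in $\CM(R)$ and containing $\Omega^dk$, then $\V(\X)$ is specialization-closed by Proposition \ref{nfsupp}(2) and is contained in $\V(\CM(R))=\Sing R$ by Proposition \ref{nfsupp}(1). It is also nonempty: since $R$ is singular, $k$ has infinite projective dimension, so $\Omega^dk$ is nonfree, whence $\m\in\V(\Omega^dk)\subseteq\V(\X)$. Conversely, let $\Phi$ be a nonempty specialization-closed subset of $\Spec R$ contained in $\Sing R$. Being nonempty and specialization-closed in a local ring forces $\m\in\Phi$. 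The subcategory $\V_{\CM}^{-1}(\Phi)=\V^{-1}(\Phi)\cap\CM(R)$ is resolving, since both $\V^{-1}(\Phi)$ (Proposition \ref{variety}(3)) and $\CM(R)$ (Example \ref{resex}(3)) are resolving and their intersection inherits all the closure properties. Moreover $\V(\Omega^dk)\subseteq\{\m\}\subseteq\Phi$, so $\Omega^dk\in\V_{\CM}^{-1}(\Phi)$.

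Second I would prove $\V\circ\V_{\CM}^{-1}=\mathrm{id}$. The inclusion $\V(\V_{\CM}^{-1}(\Phi))\subseteq\Phi$ is immediate. For the reverse, fix $\p\in\Phi$. Since $\Phi\subseteq\Sing R\subseteq\s(R)$, Proposition \ref{variety}(6) gives $R/\p\in\V^{-1}(\Phi)$, and then the resolving property yields $\Omega^d(R/\p)\in\V^{-1}(\Phi)\cap\CM(R)=\V_{\CM}^{-1}(\Phi)$. Because $R_\p$ is singular, $\kappa(\p)$ has infinite projective dimension over $R_\p$, so $\p\in\V(\Omega^d(R/\p))\subseteq\V(\V_{\CM}^{-1}(\Phi))$.

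Third I would establish $\V_{\CM}^{-1}\circ\V=\mathrm{id}$. The inclusion $\X\subseteq\V_{\CM}^{-1}(\V(\X))$ is automatic from the definition of $\V_{\CM}^{-1}$. For the reverse, take any $M\in\V_{\CM}^{-1}(\V(\X))$; then $M$ is Cohen-Macaulay with $\V(M)\subseteq\V(\X)$, and Proposition \ref{lch} directly delivers $M\in\X$. No genuine obstacle remains here; the only nontrivial step, controlling a Cohen-Macaulay module by its nonfree locus via a dimension induction on $\V(M)$ together with the hypersurface classification on the punctured spectrum (Theorem \ref{vhyp}) and the extraction lemma Proposition \ref{lnx}, has been discharged in Proposition \ref{lch}.
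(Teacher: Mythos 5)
Your proof is correct and follows essentially the same route as the paper: the well-definedness checks rely on Propositions \ref{nfsupp} and \ref{variety} and Example \ref{resex}(3), the equality $\V\circ\V_{\CM}^{-1}=\mathrm{id}$ reproduces argument (2) from the proof of Theorem \ref{origin}, and the nontrivial inclusion $\V_{\CM}^{-1}(\V(\X))\subseteq\X$ is delegated to Proposition \ref{lch}, exactly as the paper does.
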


\begin{proof}
Fix a resolving subcategory $\X$ of $\mod R$ contained in $\CM(R)$ containing $\Omega^dk$, and a nonempty specialization-closed subset $\Phi$ of $\Spec R$ contained in $\Sing R$.

(1) We see from Proposition \ref{nfsupp} that $\V(\X)$ is a specialization-closed subset of $\Spec R$ contained in $\Sing R$.

(2) The assumption that $R$ is singular implies that $\Omega^dk$ is nonfree as an $R$-module.
Hence $\m$ is in $\V(\Omega^dk)$, and $\V(\X)$ is nonempty.

(3) It follows from Proposition \ref{variety}(3) and Example \ref{resex}(3) that $\V_{\CM}^{-1}(\Phi)$ is a resolving subcategory of $\mod R$ contained in $\CM(R)$.

(4) Since $\Phi$ is nonempty, there is a prime ideal $\p\in\Phi$.
Since $\Phi$ is specialization-closed, the maximal ideal $\m$ is in $\Phi$.
Hence $\V(\Omega^dk)\subseteq\{\m\}\subseteq\Phi$, which shows $\Omega^dk$ is in $\V_{\CM}^{-1}(\Phi)$.

(5) The argument (2) in the proof of Theorem \ref{origin} proves the equality $\V(\V_{\CM}^{-1}(\Phi))=\Phi$.

(6) We have $\V_{\CM}^{-1}(\V(\X))=\X$.
Indeed, the inclusion $\V_{\CM}^{-1}(\V(\X))\supseteq\X$ is obvious, and the inclusion $\V_{\CM}^{-1}(\V(\X))\subseteq\X$ follows from Proposition \ref{lch}.
\end{proof}

As we observed in Proposition \ref{thres1}, a thick subcategory of $\CM(R)$ containing $R$ is always a resolving subcategory of $\mod R$.
Theorem \ref{lochp} yields that the converse of this statement holds for a resolving subcategory of $\mod R$ contained in $\CM(R)$ containing $\Omega^dk$ if $R$ is a Cohen-Macaulay local ring that is an abstract hypersurface on the punctured spectrum.

\begin{cor}\label{resthk}
Let $R$ be a Cohen-Macaulay local ring such that $R_\p$ is an abstract hypersurface for every nonmaximal prime ideal $\p$ of $R$.
Let $\X$ be a resolving subcategory of $\mod R$ contained in $\CM(R)$ containing $\Omega^dk$.
Then $\X$ is a thick subcategory of $\CM(R)$.
\end{cor}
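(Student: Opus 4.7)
The plan is to reduce thickness to a statement about nonfree loci via Theorem \ref{lochp}. Since $\X$ is resolving, it is already closed under direct summands, under extensions, and under kernels of epimorphisms, so two of the three two-out-of-three implications defining thickness hold automatically: given an exact sequence $0 \to L \to M \to N \to 0$ in $\CM(R)$, if $L$ and $N$ belong to $\X$ then $M \in \X$, and if $M$ and $N$ belong to $\X$ then $L \in \X$. Thus the only remaining implication to verify is that $L, M \in \X$ forces $N \in \X$.

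First I would dispose of the trivial case where $R$ is regular: then every Cohen-Macaulay $R$-module is free, so $\X = \add R$, which is patently thick. Assuming $R$ is singular, Theorem \ref{lochp} applies and identifies $\X$ with $\V_{\CM}^{-1}(\Phi)$ for the nonempty specialization-closed subset $\Phi := \V(\X) \subseteq \Sing R$. The goal then becomes to prove $\V(N) \subseteq \Phi$, which I plan to do by contradiction.

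Suppose there is some $\p \in \V(N) \setminus \Phi$. Since $L$ and $M$ lie in $\V_{\CM}^{-1}(\Phi)$ and $\p \notin \Phi$, both $L_\p$ and $M_\p$ are free $R_\p$-modules, so the localized short exact sequence $0 \to L_\p \to M_\p \to N_\p \to 0$ yields $\pd_{R_\p} N_\p \le 1$. On the other hand, because $N$ is maximal Cohen-Macaulay over $R$ and $N_\p \ne 0$, the localization $N_\p$ is maximal Cohen-Macaulay over $R_\p$; the Auslander-Buchsbaum formula then forces $\pd_{R_\p} N_\p = 0$, i.e., $N_\p$ is $R_\p$-free, contradicting $\p \in \V(N)$.

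The argument is essentially a copy of step (3)(ii) in the proof of Theorem \ref{origin}, so the main work has already been done there. The only subtlety is that Theorem \ref{lochp} has the standing hypothesis that $R$ be singular, so one must split off the regular case by hand; but as noted, that case is trivial.
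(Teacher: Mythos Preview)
Your proof is correct and follows the same approach as the paper: split off the regular case, invoke Theorem \ref{lochp} to identify $\X$ with $\V_{\CM}^{-1}(\Phi)$ for $\Phi=\V(\X)$, and then run the Auslander--Buchsbaum argument from step (3)(ii) in the proof of Theorem \ref{origin} to check the remaining two-out-of-three implication. You have spelled out that last step in detail, whereas the paper simply cites it, but the content is identical.
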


\begin{proof}
If $R$ is regular, then we have $\CM(R)=\add R$, and $\X$ must coincide with $\add R$.
Let $R$ be singular.
Theorem \ref{lochp} implies $\X=\V_{\CM}^{-1}(\Phi)$, where $\Phi=\V(\X)$.
The argument (3)(ii) in the proof of Theorem \ref{origin} shows that $\V_{\CM}^{-1}(\Phi)$ is a thick subcategory of $\CM(R)$.
\end{proof}

An abstract hypersurface singular local ring and a Cohen-Macaulay singular local ring with an isolated singularity are trivial examples of a ring which satisfies the assumption of Theorem \ref{lochp}.
We end this section by putting some nontrivial examples.

\begin{ex}
Let $k$ be a field.
The following rings $R$ are Cohen-Macaulay singular local rings which are hypersurfaces on the punctured spectrums.
\begin{enumerate}[(1)]
\item
Let $R=k[[x,y,z]]/(x^2,yz)$.
Then $R$ is a $1$-dimensional local complete intersection which is neither a hypersurface nor with an isolated singularity.
All the prime ideals of $R$ are $\p=(x,y)$, $\q=(x,z)$ and $\m=(x,y,z)$.
It is easy to observe that both of the local rings $R_\p$ and $R_\q$ are hypersurfaces.
\item
Let $R=k[[x,y,z,w]]/(y^2-xz,yz-xw,z^2-yw,zw,w^2)$.
Then $R$ is a $1$-dimensional Gorenstein local ring which is neither a complete intersection nor with an isolated singularity.
All the prime ideals are $\p=(y,z,w)$ and $\m=(x,y,z,w)$.
We easily see that $R_\p$ is a hypersurface.
\item
Let $R=k[[x,y,z]]/(x^2,xy,yz)$.
Then $R$ is a $1$-dimensional Cohen-Macaulay local ring which is neither Gorenstein nor with an isolated singularity.
All the prime ideals are $\p=(x,y)$, $\q=(x,z)$ and $\m=(x,y,z)$.
We have that $R_\p$ is a hypersurface and that $R_\q$ is a field.
\end{enumerate}
\end{ex}

\section{Thick subcategories of stable Cohen-Macaulay modules}\label{tsoscm}

In this section, we make results on the stable category of Cohen-Macaulay modules from ones on the category of Cohen-Macaulay modules obtained in Sections \ref{tsocm} and \ref{tsocmoh}.
For this aim, we start by making the following definition.

\begin{dfn}\label{bar}
Let $R$ be a Cohen-Macaulay local ring.
\begin{enumerate}[(1)]
\item
For a subcategory $\X$ of $\CM(R)$, we define the category $\underline\X$ as follows:
\begin{enumerate}[(i)]
\item
$\Ob(\underline\X)=\Ob(\X)$.
\item
$\Hom_{\underline\X}(M,N)=\lhom_R(M,N)$ for $M,N\in\Ob(\underline\X)$.
\end{enumerate}
\item
For a subcategory $\Y$ of $\lCM(R)$, we define the category $\overline\Y$ as follows:
\begin{enumerate}[(i)]
\item
$\Ob(\overline\Y)=\Ob(\Y)$.
\item
$\Hom_{\overline\Y}(M,N)=\Hom_R(M,N)$ for $M,N\in\Ob(\overline\Y)$.
\end{enumerate}
\end{enumerate}
\end{dfn}

The proposition below says that classifying thick subcategories of $\lCM(R)$ is equivalent to classifying thick subcategories of $\CM(R)$ containing $R$.
It can easily be proved, and we leave the proof to the reader.

\begin{prop}\label{dotdot}
Let $R$ be a Gorenstein local ring.
Then one has the following one-to-one correspondences:
$$
\begin{CD}
\left\{
\begin{matrix}
\text{subcategories of }\CM(R)\text{ which are}\\
\text{closed under direct sums and}\\
\text{direct summands and contain }R
\end{matrix}
\right\}
@.
\begin{matrix}
@>{\underline{(\cdot)}}>>\\
@<<{\overline{(\cdot)}}<
\end{matrix}
@.
\left\{
\begin{matrix}
\text{nonempty subcategories of }\lCM(R)\\
\text{which are closed under finite}\\
\text{direct sums and direct summands}
\end{matrix}
\right\}\\
@A{\subseteq}AA @. @AA{\subseteq}A \\
\left\{
\begin{matrix}
\text{thick subcategories of }\CM(R)\\
\text{ containing }R
\end{matrix}
\right\}
@.
\begin{matrix}
@>{\underline{(\cdot)}}>>\\
@<<{\overline{(\cdot)}}<
\end{matrix}
@.
\left\{
\begin{matrix}
\text{thick subcategories}\\
\text{of }\lCM(R)
\end{matrix}
\right\}.
\end{CD}
$$
For every Cohen-Macaulay $R$-module $M$, the thick subcategory of $\CM(R)$ generated by $R\oplus M$ corresponds to the thick subcategory of $\lCM(R)$ generated by $M$.
\end{prop}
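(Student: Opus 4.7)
The plan is to verify in turn that the two passage maps $\underline{(\cdot)}$ and $\overline{(\cdot)}$ are well-defined, that they are mutually inverse on both levels, and that they exchange thickness. The crucial point throughout is that two Cohen-Macaulay modules $M,N$ are isomorphic in $\lCM(R)$ precisely when $M\oplus F\cong N\oplus G$ in $\CM(R)$ for some free modules $F,G$; this lets us move between isomorphism classes in the two categories using closedness under $R$, direct sums and direct summands.

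First I would check well-definedness. For $\X$ in the left column, the objects of $\underline\X$ form a class closed under isomorphism in $\lCM(R)$: if $M\in\X$ and $N\cong M$ in $\lCM(R)$, then $M\oplus F\cong N\oplus G$ in $\CM(R)$, and since $\X$ contains $R$ and is closed under direct sums and direct summands, $N\in\X$. For $\Y$ in the right column, nonemptiness together with closedness under direct summands forces the zero object, hence all free modules, into $\overline\Y$, so $R\in\overline\Y$; closedness of $\overline\Y$ under direct sums and direct summands in $\CM(R)$ is immediate from the corresponding closedness of $\Y$ in $\lCM(R)$. The compositions $\overline{\underline\X}=\X$ and $\underline{\overline\Y}=\Y$ hold essentially by construction, since both operations preserve the class of objects and only change the $\Hom$-sets.

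Next I would transport the thickness conditions. Given a thick $\X\subseteq\CM(R)$ containing $R$, closedness of $\underline\X$ under direct summands in $\lCM(R)$ follows from the free-stabilization trick above. For closedness under triangles, the key fact (recalled in the remark on the Frobenius structure of $\CM(R)$) is that every exact triangle in $\lCM(R)$ is, up to isomorphism in $\lCM(R)$, of the form $L\to M\to N\to\Sigma L$ induced by a short exact sequence $0\to L\to M\to N\to 0$ in $\CM(R)$; then thickness of $\X$ in the sense of $\CM(R)$ supplies the third term. Conversely, a short exact sequence in $\CM(R)$ induces an exact triangle in $\lCM(R)$, so thickness of $\Y$ transfers to $\overline\Y$; note that $R\in\overline\Y$ already from the well-definedness step.

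Finally, for the last assertion I would argue by double inclusion: $\underline{\X}$ is a thick subcategory of $\lCM(R)$ containing $M$, so it contains the thick subcategory of $\lCM(R)$ generated by $M$, and symmetrically $\overline{\Y}$ is a thick subcategory of $\CM(R)$ containing $R\oplus M$, so it contains the thick subcategory of $\CM(R)$ generated by $R\oplus M$; applying the (already-established) bijection converts both containments into equalities. The only real obstacle is isolating the translation between short exact sequences in $\CM(R)$ and exact triangles in $\lCM(R)$ cleanly; once that is invoked, the rest reduces to bookkeeping with the isomorphism relation $M\oplus F\cong N\oplus G$.
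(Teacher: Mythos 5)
Your proposal is correct, and since the paper explicitly leaves this proof to the reader, your argument fills the gap in a reasonable way. The free-stabilization lemma you lean on (that $M\cong N$ in $\lCM(R)$ if and only if $M\oplus F\cong N\oplus G$ in $\CM(R)$ for some free $F,G$) is exactly the right tool, both for showing that $\underline\X$ is closed under stable isomorphism (so that it is a genuine subcategory of $\lCM(R)$ in the paper's convention) and that it inherits closedness under direct summands. Your treatment of the triangle/short-exact-sequence dictionary is also sound: a standard triangle $L\to M\to N\to\Sigma L$ built from the pushout with $0\to L\to F\to\Sigma L\to0$ yields the short exact sequence $0\to L\to M\oplus F\to N\to0$, and conversely every short exact sequence in $\CM(R)$ gives an exact triangle because $\Ext^1_R(N,R)=0$ for Cohen--Macaulay $N$ over Gorenstein $R$; either way, the two-out-of-three conditions translate since $\X$ already contains the free modules. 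The double-inclusion argument at the end, using that the bijection is order-preserving and that $\overline\Y$ automatically contains $R$, is the natural way to finish, and it matches how the paper uses this proposition later (e.g.\ in passing from Corollary \ref{thresc} to the stable side).
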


Now, we define the notion of a support for objects and subcategories of the stable category of Cohen-Macaulay modules.

\begin{dfn}
Let $R$ be a Cohen-Macaulay local ring.
\begin{enumerate}[(1)]
\item
For an object $M$ of $\lCM(R)$, we denote by $\lSupp M$ the set of prime ideals $\p$ of $R$ such that the localization $M_\p$ is not isomorphic to the zero module $0$ in the category $\lCM(R_\p)$.
We call it the {\em stable support} of $M$.
\item
For a subcategory $\Y$ of $\lCM(R)$, we denote by $\lSupp\Y$ the union of $\lSupp M$ where $M$ runs through all nonisomorphic objects in $\Y$.
We call it the {\em stable support} of $\Y$.
\item
For a subset $\Phi$ of $\Spec R$, we denote by $\lSupp^{-1}\Phi$ the subcategory of $\lCM(R)$ consisting of all objects $M\in\lCM(R)$ such that $\lSupp M$ is contained in $\Phi$.
\end{enumerate}
\end{dfn}

The notion of a stable support is essentially the same thing as that of a nonfree locus.

\begin{prop}\label{lsuppv}
Let $R$ be a Cohen-Macaulay local ring.
\begin{enumerate}[\rm (1)]
\item
Let $M$ be a Cohen-Macaulay $R$-module.
Then one has $\lSupp M=\V(M)$.
\item
Let $\X$ be a subcategory of $\CM(R)$.
Then one has $\lSupp\underline\X=\V(\X)$.
\item
Let $\Y$ be a subcategory of $\lCM(R)$.
Then one has $\lSupp\Y=\V(\overline\Y)$.
\item
Let $\Phi$ be a subset of $\Spec R$.
Then one has $\lSupp^{-1}\Phi=\underline{\V_{\CM}^{-1}(\Phi)}$.
\end{enumerate}
\end{prop}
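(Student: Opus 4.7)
My plan is to reduce everything to part (1), after which parts (2)--(4) follow formally from the definitions by taking unions and rephrasing.

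For part (1), let $M$ be a Cohen-Macaulay $R$-module and let $\p \in \Spec R$. Since $R_\p$ is Cohen-Macaulay and localization preserves being (maximal) Cohen-Macaulay, either $M_\p = 0$ (when $\p \notin \Supp M$) or $M_\p \in \CM(R_\p)$. The key observation is the standard fact that in the stable category $\lCM(R_\p)$, an object $N$ is isomorphic to $0$ if and only if $\mathrm{id}_N \in \lhom_{R_\p}(N,N)$ vanishes, i.e.\ $\mathrm{id}_N$ factors through a free $R_\p$-module, which happens precisely when $N$ is a direct summand of a free module, hence free (as $R_\p$ is local). Applying this to $N = M_\p$ gives $\p \in \lSupp M \iff M_\p \not\cong 0 \text{ in } \lCM(R_\p) \iff M_\p \text{ is not free over } R_\p \iff \p \in \V(M)$, which is (1).

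For part (2), since the underlying objects of $\underline\X$ are the same as those of $\X$, I unwind the definitions:
\[
\lSupp \underline\X = \bigcup_{M \in \underline\X} \lSupp M = \bigcup_{M \in \X} \V(M) = \V(\X),
\]
using part (1). Part (3) is completely analogous with $\Y$ and $\overline\Y$ playing the roles of $\underline\X$ and $\X$. For part (4), I observe directly from the definitions that
\[
\lSupp^{-1}\Phi = \{ M \in \lCM(R) : \lSupp M \subseteq \Phi \} = \{ M \in \CM(R) : \V(M) \subseteq \Phi \} = \underline{\V_{\CM}^{-1}(\Phi)},
\]
where the middle equality uses part (1) together with the fact that the objects of $\lCM(R)$ are exactly the objects of $\CM(R)$.

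There is no real obstacle here: the only nontrivial ingredient is the standard characterization of the zero object in $\lCM(R_\p)$ as a free module, and once part (1) is in hand, the remaining parts are bookkeeping about objects versus morphisms in the two categories introduced in Definition \ref{bar}. I would keep the write-up short, essentially just verifying (1) carefully and then indicating that (2)--(4) are formal consequences.
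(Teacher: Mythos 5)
Your proof is correct, and it supplies exactly the details that the paper omits (the paper just asserts ``The proof of this proposition is straightforward'' and gives no argument). Your reduction to part (1) and the bookkeeping for (2)--(4) is the natural route, and the key ingredient in (1) is right: an object $N$ of $\lCM(R_\p)$ is isomorphic to the zero object if and only if $\mathrm{id}_N$ factors through a free module, if and only if $N$ is a direct summand of a free $R_\p$-module, if and only if $N$ is free (since $R_\p$ is local). One small point you implicitly rely on, which is standard and worth making explicit in a careful write-up, is that a maximal Cohen--Macaulay module $M$ over a Cohen--Macaulay local ring localizes to a maximal Cohen--Macaulay $R_\p$-module (or zero) at every $\p$, so that $M_\p$ is indeed an object of $\lCM(R_\p)$ and the phrase ``isomorphic to $0$ in $\lCM(R_\p)$'' makes sense. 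With that noted, there is no gap.
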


The proof of this proposition is straightforward.

Theorem \ref{origin} and Propositions \ref{dotdot}, \ref{lsuppv} yield the result below.

\begin{thm}\label{tilde}
Let $R$ be a Gorenstein local ring.
Then one has the following one-to-one correspondence:
$$
\begin{CD}
\left\{
\begin{matrix}
\text{thick subcategories }\Y\\
\text{of }\lCM(R)\text{ with }\kappa(\p)\in\widetilde{\add_{R_\p}\overline{\Y}_\p}\\
\text{for all }\p\in\lSupp\Y
\end{matrix}
\right\}
\begin{matrix}
@>{\lSupp}>>\\
@<<{\lSupp^{-1}}<
\end{matrix}
\left\{
\begin{matrix}
\text{specialization-closed subsets}\\
\text{of }\Spec R\text{ contained in }\Sing R
\end{matrix}
\right\}.
\end{CD}
$$
\end{thm}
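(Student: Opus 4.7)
The plan is to deduce Theorem \ref{tilde} almost formally from Theorem \ref{origin} together with Propositions \ref{dotdot} and \ref{lsuppv}, using the fact that Gorensteinness drastically simplifies both sides of Theorem \ref{origin}. The essential content of the stable-category classification is already packed into Theorem \ref{origin}; what remains is a careful bookkeeping exercise.

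First I would observe that since $R$ is Gorenstein, the canonical module $\omega$ is isomorphic to $R$ itself, and the non-Gorenstein locus $\ng R$ is empty. Plugging this into Theorem \ref{origin}, the condition ``$\X$ contains $R$ and $\omega$'' becomes simply ``$\X$ contains $R$'', and the constraint ``$\ng R \subseteq \Phi \subseteq \Sing R$'' becomes ``$\Phi \subseteq \Sing R$''. Thus Theorem \ref{origin} already provides a bijection
\[
\left\{
\begin{matrix}
\text{thick subcategories }\X\text{ of }\CM(R)\text{ containing }R\\
\text{with }\kappa(\p)\in\widetilde{\add_{R_\p}\X_\p}\text{ for all }\p\in\V(\X)
\end{matrix}
\right\}
\;\longleftrightarrow\;
\left\{
\begin{matrix}
\text{specialization-closed subsets}\\
\text{of }\Spec R\text{ contained in }\Sing R
\end{matrix}
\right\},
\]
whose right-hand side coincides with the right-hand side of Theorem \ref{tilde}.

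Next I would translate the left-hand side from $\CM(R)$ to $\lCM(R)$ using Proposition \ref{dotdot}. That proposition supplies a bijection between thick subcategories of $\CM(R)$ containing $R$ and thick subcategories of $\lCM(R)$, implemented by the mutually inverse operations $\underline{(\cdot)}$ and $\overline{(\cdot)}$. Writing $\X=\overline{\Y}$ for $\Y$ a thick subcategory of $\lCM(R)$, the condition $\kappa(\p)\in\widetilde{\add_{R_\p}\X_\p}$ for all $\p\in\V(\X)$ becomes $\kappa(\p)\in\widetilde{\add_{R_\p}\overline{\Y}_\p}$ for all $\p\in\V(\overline{\Y})$. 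By Proposition \ref{lsuppv}(3), $\V(\overline{\Y})=\lSupp\Y$, so this is precisely the condition appearing in the left-hand side of Theorem \ref{tilde}.

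Finally I would identify the maps. Under the dictionary $\X=\overline{\Y}$ and $\Y=\underline{\X}$, parts (2)--(4) of Proposition \ref{lsuppv} give $\V(\X)=\V(\overline{\Y})=\lSupp\Y$ and $\lSupp^{-1}(\Phi)=\underline{\V_{\CM}^{-1}(\Phi)}$. Hence the map $\V$ of Theorem \ref{origin} corresponds to $\lSupp$ of Theorem \ref{tilde}, and $\V_{\CM}^{-1}$ corresponds to $\lSupp^{-1}$. Composing the three bijections yields the desired one-to-one correspondence.

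I do not expect any real obstacle here: all the hard work is done in Theorem \ref{origin} (which rests on Proposition \ref{thickcm} and hence on Corollary \ref{kore}), while Propositions \ref{dotdot} and \ref{lsuppv} are largely bookkeeping. The only point requiring mild care is to verify that the side condition ``$\kappa(\p)\in\widetilde{\add_{R_\p}\overline{\Y}_\p}$'' is preserved when passing between $\X$ and $\underline{\X}$; this is immediate because $\overline{\underline{\X}}=\X$ on the nose (same objects, same $\Hom_R$-sets), so $\add_{R_\p}\X_\p=\add_{R_\p}\overline{\underline{\X}}_\p$ and the closure $\widetilde{(\cdot)}$ is computed the same way on both sides.
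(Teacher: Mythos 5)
Your proposal is correct and matches the paper's own route exactly: the paper simply states that Theorem \ref{origin} together with Propositions \ref{dotdot} and \ref{lsuppv} yield the result, and your argument spells out precisely that bookkeeping, using $\omega\cong R$ and $\ng R=\emptyset$ for Gorenstein $R$ to specialize Theorem \ref{origin} before transporting across the $\underline{(\cdot)}$ / $\overline{(\cdot)}$ dictionary. No gaps.
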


By Theorem \ref{vhyp} and Proposition \ref{dotdot}, we obtain the following theorem which classifies the thick subcategories of the stable category of Cohen-Macaulay modules over an abstract hypersurface.

\begin{thm}\label{tilth}
Let $R$ be an abstract hypersurface.
Then one has the following one-to-one correspondence:
$$
\begin{CD}
\left\{
\begin{matrix}
\text{thick subcategories}\\
\text{of }\lCM(R)\\
\end{matrix}
\right\}
\begin{matrix}
@>{\lSupp}>>\\
@<<{\lSupp^{-1}}<
\end{matrix}
\left\{
\begin{matrix}
\text{specialization-closed subsets of }\Spec R\\
\text{contained in }\Sing R
\end{matrix}
\right\}.
\end{CD}
$$
\end{thm}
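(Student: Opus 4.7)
The plan is to obtain Theorem \ref{tilth} as a direct composition of the bijections established in Proposition \ref{dotdot}, Theorem \ref{vhyp}, and Proposition \ref{lsuppv}. No new geometric or homological input should be needed; the work has already been carried out in the ambient $\CM(R)$-setting, and it only remains to transport it across the passage from $\CM(R)$ to $\lCM(R)$.

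First, I would invoke Proposition \ref{dotdot} to identify the left-hand collection in Theorem \ref{tilth}, namely the thick subcategories of $\lCM(R)$, with the thick subcategories of $\CM(R)$ containing $R$, via the mutually inverse assignments $\Y\mapsto\overline{\Y}$ and $\X\mapsto\underline{\X}$. Next, I would apply Theorem \ref{vhyp} (whose hypothesis that $R$ is an abstract hypersurface is exactly the hypothesis of Theorem \ref{tilth}) to identify the thick subcategories $\X$ of $\CM(R)$ containing $R$ with the specialization-closed subsets $\Phi\subseteq\Sing R$, via $\X\mapsto\V(\X)$ and $\Phi\mapsto\V_{\CM}^{-1}(\Phi)$.

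The bijection of Theorem \ref{tilth} is then the composite: starting from a thick subcategory $\Y$ of $\lCM(R)$, one forms $\overline{\Y}$, then $\V(\overline{\Y})$; starting from $\Phi$, one forms $\V_{\CM}^{-1}(\Phi)$, then $\underline{\V_{\CM}^{-1}(\Phi)}$. To see that these composites match the maps $\lSupp$ and $\lSupp^{-1}$ in the statement, I would quote Proposition \ref{lsuppv}(3), which gives $\lSupp\Y=\V(\overline{\Y})$, and Proposition \ref{lsuppv}(4), which gives $\lSupp^{-1}\Phi=\underline{\V_{\CM}^{-1}(\Phi)}$. So the two composite assignments are literally $\lSupp$ and $\lSupp^{-1}$, and the bijectivity follows from bijectivity of the two constituent correspondences.

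There is essentially no obstacle: the genuine content (in particular, the hypersurface input $\syz^2M\cong M$ up to free summand used to verify that a resolving subcategory contained in $\CM(R)$ is automatically thick, together with the classification via $\V$) has been absorbed into Theorem \ref{vhyp}. The only thing to keep an eye on is the bookkeeping for the empty/zero subcategories: under $\underline{(\cdot)}$, the subcategory $\add R\subseteq\CM(R)$ becomes the zero subcategory of $\lCM(R)$, which is thick (containing the zero object) and corresponds to $\V(\add R)=\emptyset$, so the empty specialization-closed subset is accounted for consistently on both sides. Modulo that sanity check, the proof is a one-line composition of previously established bijections.
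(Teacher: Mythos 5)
Your proof is correct and takes essentially the same route as the paper, which obtains Theorem~\ref{tilth} directly by combining Theorem~\ref{vhyp} with Proposition~\ref{dotdot}. Your additional citation of Proposition~\ref{lsuppv}(3),(4) to identify the composite assignments with $\lSupp$ and $\lSupp^{-1}$, and your sanity check on the zero subcategory matching the empty specialization-closed subset, simply make explicit what the paper leaves tacit.
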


Using Theorem \ref{lochp}, Corollary \ref{resthk}, Propositions \ref{thres1} and \ref{dotdot}, we get the following classification theorem analogous to Theorem \ref{tilth}.

\begin{thm}\label{gph}
Let $R$ be a Gorenstein singular local ring such that $R_\p$ is an abstract hypersurface for every nonmaximal prime ideal $\p$ of $R$.
Then  one has the following one-to-one correspondence:
$$
\begin{CD}
\left\{
\begin{matrix}
\text{thick subcategories of }\lCM(R)\\
\text{containing }\Omega^dk
\end{matrix}
\right\}
\begin{matrix}
@>{\lSupp}>>\\
@<<{\lSupp^{-1}}<
\end{matrix}
\left\{
\begin{matrix}
\text{nonempty specialization-closed subsets}\\
\text{of }\Spec R\text{ contained in }\Sing R
\end{matrix}
\right\}.
\end{CD}
$$
\end{thm}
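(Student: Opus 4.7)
The plan is to derive Theorem~\ref{gph} by composing the bijection of Proposition~\ref{dotdot} with Theorem~\ref{lochp}, translating between the $\CM$-side and the $\lCM$-side via Proposition~\ref{lsuppv}. First, I would use Proposition~\ref{dotdot} to identify thick subcategories of $\lCM(R)$ with thick subcategories of $\CM(R)$ containing $R$, via $\Y \mapsto \overline\Y$ and $\X \mapsto \underline\X$. Since $\Omega^d k$ is the same object on both sides, the condition that $\Y$ contains $\Omega^d k$ matches the condition that $\overline\Y$ contains $\Omega^d k$, so the left-hand side of the theorem is in bijection with the class of thick subcategories of $\CM(R)$ containing both $R$ and $\Omega^d k$.

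Next I would combine Proposition~\ref{thres1} and Corollary~\ref{resthk}. The former says every thick subcategory of $\CM(R)$ containing $R$ is a resolving subcategory of $\mod R$ contained in $\CM(R)$; the latter, using that $R_\p$ is an abstract hypersurface for every nonmaximal $\p$, says that every resolving subcategory of $\mod R$ contained in $\CM(R)$ and containing $\Omega^d k$ is automatically thick in $\CM(R)$. Together these give an equality between the class of thick subcategories of $\CM(R)$ containing $R$ and $\Omega^d k$ and the class of resolving subcategories of $\mod R$ contained in $\CM(R)$ containing $\Omega^d k$ (noting that $R$ is automatically in every resolving subcategory).

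Third, I would apply Theorem~\ref{lochp}: since $R$ is Gorenstein singular and locally an abstract hypersurface on the punctured spectrum, the assumption of Theorem~\ref{lochp} is fulfilled, and that theorem bijectively identifies the latter class with the nonempty specialization-closed subsets of $\Spec R$ contained in $\Sing R$, via $\V$ and $\V_{\CM}^{-1}$. To transport this bijection to the $\lCM$-side I would invoke Proposition~\ref{lsuppv}: part~(3) gives $\lSupp\Y = \V(\overline\Y)$, so the map $\lSupp$ on thick subcategories of $\lCM(R)$ corresponds to $\V$ on thick subcategories of $\CM(R)$ containing $R$; part~(4) gives $\lSupp^{-1}\Phi = \underline{\V_{\CM}^{-1}(\Phi)}$, so $\lSupp^{-1}$ corresponds to $\V_{\CM}^{-1}$ after the passage $\overline{(\cdot)} \mapsto \underline{(\cdot)}$.

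The main obstacle, such as it is, is purely bookkeeping: one must check that the decorations (containing $\Omega^d k$, being nonempty, being contained in $\Sing R$) are preserved in every step, and in particular that $\V_{\CM}^{-1}(\Phi)$ always contains $R$ and $\Omega^d k$ when $\Phi$ is a nonempty specialization-closed subset of $\Sing R$ (the first since $\V(R)=\emptyset$, the second since $\m\in\Phi$ whenever $\Phi\ne\emptyset$ and $\V(\Omega^d k)\subseteq\{\m\}$). Once these compatibilities are verified, the claimed bijection in Theorem~\ref{gph} follows immediately by composition of the three bijections above.
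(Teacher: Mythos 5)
Your proposal is correct and follows exactly the route the paper itself takes: the paper's one-line proof of Theorem~\ref{gph} cites precisely Theorem~\ref{lochp}, Corollary~\ref{resthk}, and Propositions~\ref{thres1} and~\ref{dotdot}, with the translation between $\V$/$\V_{\CM}^{-1}$ and $\lSupp$/$\lSupp^{-1}$ coming from Proposition~\ref{lsuppv} as you indicate. You have simply spelled out the bookkeeping the paper leaves implicit, and all the decoration-preservation checks you raise (containing $\Omega^dk$, nonemptiness, containment in $\Sing R$) are handled correctly.
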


Gathering Theorems \ref{vhyp}, \ref{lochp}, \ref{tilth}, \ref{gph}, Proposition \ref{thres2} and Corollary \ref{resthk} together, we obtain the following classification theorem, which is the main result of this paper.

\begin{thm}\label{main}
\begin{enumerate}[\rm (1)]
\item
Let $R$ be an abstract hypersurface.
Then one has the following one-to-one correspondences:
$$
\begin{CD}
\left\{
\begin{matrix}
\text{thick subcategories}\\
\text{of }\lCM(R)
\end{matrix}
\right\}
@.
\begin{matrix}
@>{\lSupp}>>\\
@<<{\lSupp^{-1}}<
\end{matrix}
@.
\left\{
\begin{matrix}
\text{specialization-closed subsets of }\Spec R\\
\text{contained in }\Sing R
\end{matrix}
\right\}
\\
\\
@.
\begin{matrix}
@>{\V_{\CM}^{-1}}>>\\
@<<{\V}<
\end{matrix}
@.
\left\{
\begin{matrix}
\text{thick subcategories of }\CM(R)\\
\text{containing }R
\end{matrix}
\right\}
\\
\\
@.
\begin{matrix}
@=
\end{matrix}
@.
\left\{
\begin{matrix}
\text{resolving subcategories of }\mod R\\
\text{contained in }\CM(R)
\end{matrix}
\right\}.
\end{CD}
$$
\item
Let $R$ be a Gorenstein singular local ring such that $R_\p$ is an abstract hypersurface for every nonmaximal prime ideal $\p$ of $R$.
Then one has the following one-to-one correspondences:
$$
\begin{CD}
\left\{
\begin{matrix}
\text{thick subcategories of }\lCM(R)\\
\text{containing }\Omega^dk
\end{matrix}
\right\}
@.
\begin{matrix}
@>{\lSupp}>>\\
@<<{\lSupp^{-1}}<
\end{matrix}
@.
\left\{
\begin{matrix}
\text{nonempty specialization-closed subsets}\\
\text{of }\Spec R\text{ contained in }\Sing R
\end{matrix}
\right\}
\\
\\
@.
\begin{matrix}
@>{\V_{\CM}^{-1}}>>\\
@<<{\V}<
\end{matrix}
@.
\left\{
\begin{matrix}
\text{thick subcategories of }\CM(R)\\
\text{containing }R\text{ and }\Omega^dk
\end{matrix}
\right\}
\\
\\
@.
\begin{matrix}
@=
\end{matrix}
@.
\left\{
\begin{matrix}
\text{resolving subcategories of }\mod R\\
\text{contained in }\CM(R)\text{ containing }\Omega^dk
\end{matrix}
\right\}.
\end{CD}
$$
\end{enumerate}
\end{thm}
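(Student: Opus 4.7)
The plan is to assemble Theorem \ref{main} directly from the results already established in Sections \ref{tsocm}, \ref{tsocmoh}, and the earlier part of Section \ref{tsoscm}; no fundamentally new argument is needed, only a careful verification that the various bijections are mutually compatible.

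For part (1), I would proceed as follows. The bijection between thick subcategories of $\lCM(R)$ and specialization-closed subsets of $\Spec R$ contained in $\Sing R$ is Theorem \ref{tilth}. The bijection between thick subcategories of $\CM(R)$ containing $R$ and the same collection of specialization-closed subsets is Theorem \ref{vhyp}, given via $\V$ and $\V_{\CM}^{-1}$. To identify the ``stable'' side with the ``unstable'' side in a commutative way, I would invoke Proposition \ref{dotdot} (which gives the passage $\underline{(\cdot)}$, $\overline{(\cdot)}$) together with Proposition \ref{lsuppv}, which tells us that stable support and nonfree locus agree through these constructions. Finally, Proposition \ref{thres2} identifies the class of thick subcategories of $\CM(R)$ containing $R$ with the class of resolving subcategories of $\mod R$ contained in $\CM(R)$, and this identification is the literal identity on underlying subcategories, so the map $\V$ is plainly the same on both sides. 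Chaining these three bijections yields the diagram in part (1).

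For part (2), the structure is parallel. Theorem \ref{gph} supplies the bijection between thick subcategories of $\lCM(R)$ containing $\Omega^dk$ and nonempty specialization-closed subsets of $\Spec R$ contained in $\Sing R$. Theorem \ref{lochp} supplies the bijection between resolving subcategories of $\mod R$ contained in $\CM(R)$ containing $\Omega^dk$ and the same collection of subsets. The remaining point is to identify resolving subcategories of $\mod R$ contained in $\CM(R)$ containing $\Omega^dk$ with thick subcategories of $\CM(R)$ containing both $R$ and $\Omega^dk$: Proposition \ref{thres1} gives the implication from thick (containing $R$) to resolving, and Corollary \ref{resthk} gives the converse under the hypotheses of part (2). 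Again, Propositions \ref{dotdot} and \ref{lsuppv} ensure that the stable and unstable descriptions match up compatibly with $\V$, $\V_{\CM}^{-1}$, $\lSupp$, $\lSupp^{-1}$.

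The only nontrivial verifications in assembling the diagram are (a) that under the equivalence $\underline{(\cdot)}\dashv\overline{(\cdot)}$ the condition ``containing $\Omega^dk$'' on the stable side corresponds to ``containing $R$ and $\Omega^dk$'' on the unstable side, which is immediate since $R$ is the zero object in $\lCM(R)$, and (b) that the outer composites $\V\circ\V_{\CM}^{-1}$ and $\V_{\CM}^{-1}\circ\V$ really coincide with the identities coming from Theorems \ref{vhyp} and \ref{lochp}; this is Proposition \ref{lsuppv}(1)--(4). I do not anticipate any substantive obstacle: every one of the six arrows in each diagram of Theorem \ref{main} has already been constructed and shown bijective in an earlier theorem, so the task is purely one of bookkeeping. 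The most delicate point, if any, is tracking that the empty/zero subcategory issue in part (2) (``nonempty'' on the set-theoretic side, ``containing $\Omega^dk$'' on the categorical side) matches across all three layers; this is handled by noting that $R$ being singular forces $\m\in\V(\Omega^dk)$, as was used already in the proof of Theorem \ref{lochp}.
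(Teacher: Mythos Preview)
Your proposal is correct and follows essentially the same approach as the paper, which simply states that Theorem \ref{main} is obtained by ``gathering Theorems \ref{vhyp}, \ref{lochp}, \ref{tilth}, \ref{gph}, Proposition \ref{thres2} and Corollary \ref{resthk} together.'' Your write-up is in fact more detailed than the paper's own one-line proof, making explicit the role of Propositions \ref{dotdot} and \ref{lsuppv} in ensuring compatibility and the use of Proposition \ref{thres1} for one direction of the thick/resolving identification in part (2).
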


Note that Theorem \ref{main}(2) recovers Corollary \ref{omegadk}.
Applying Theorem \ref{main}(1), we observe that over an abstract hypersurface $R$ having an isolated singularity there are only trivial resolving subcategories of $\mod R$ contained in $\CM(R)$ and thick subcategories of $\CM(R)$.

\begin{cor}\label{his}
Let $R$ be an abstract hypersurface with an isolated singularity.
\begin{enumerate}[\rm (1)]
\item
All resolving subcategories of $\mod R$ contained in $\CM(R)$ are $\add R$ and $\CM(R)$.
\item
All thick subcategories of $\lCM(R)$ are the empty subcategory, the zero subcategory, and $\lCM(R)$.
\end{enumerate}
\end{cor}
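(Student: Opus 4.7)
The plan is to read this off directly from the bijections in Theorem \ref{main}(1). First I would determine all the specialization-closed subsets of $\Spec R$ that are contained in $\Sing R$ under the isolated-singularity hypothesis. Since $R_\p$ is regular for every nonmaximal $\p$, we have $\Sing R \subseteq \{\m\}$, so the only candidates are $\emptyset$ and (when $R$ is singular) $\{\m\}$; both are trivially specialization-closed. So I expect exactly one or two nonempty objects in each of the categories in Theorem \ref{main}(1), corresponding to these at most two subsets.

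Next I would identify the preimages of $\emptyset$ and $\{\m\}$ under $\V_{\CM}^{-1}$. By definition $\V_{\CM}^{-1}(\emptyset)$ consists of Cohen-Macaulay modules with empty nonfree locus, i.e. the free modules, giving $\add R$. On the other side, $\V_{\CM}^{-1}(\{\m\})$ consists of Cohen-Macaulay modules that are free on the punctured spectrum; since $R$ has an isolated singularity every CM module has this property, so the preimage is all of $\CM(R)$. This proves (1): the resolving subcategories of $\mod R$ contained in $\CM(R)$ are exactly $\add R$ and $\CM(R)$.

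For (2) I would apply the same reasoning to the bijection $\lSupp \leftrightarrow \lSupp^{-1}$ and use Proposition \ref{lsuppv}: $\lSupp^{-1}(\emptyset) = \underline{\V_{\CM}^{-1}(\emptyset)} = \underline{\add R}$, which is the zero subcategory of $\lCM(R)$ (every free module becomes zero in the stable category), and $\lSupp^{-1}(\{\m\}) = \underline{\CM(R)} = \lCM(R)$. The empty subcategory is the remaining trivial option not covered by the bijection (which in our convention concerns only nonempty thick subcategories).

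There is essentially no obstacle: the only nontrivial input is the identification $\V_{\CM}^{-1}(\{\m\}) = \CM(R)$, which uses precisely the isolated-singularity hypothesis via Proposition \ref{nfsupp}(1); everything else is bookkeeping on Theorem \ref{main}(1) and the compatibilities recorded in Propositions \ref{dotdot} and \ref{lsuppv}.
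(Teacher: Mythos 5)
Your proposal is correct and takes essentially the same approach as the paper: both simply enumerate the specialization-closed subsets of $\Spec R$ contained in $\Sing R$ under the isolated-singularity hypothesis (namely $\emptyset$ and, if $R$ is singular, $\{\m\}$), compute their preimages under $\V_{\CM}^{-1}$ and $\lSupp^{-1}$, and invoke the bijections of Theorem \ref{main}(1). Your remarks on why $\V_{\CM}^{-1}(\{\m\})=\CM(R)$ and on the empty subcategory lying outside the bijection just make explicit what the paper leaves implicit.
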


\begin{proof}
Since $R$ has an isolated singularity, the singular locus $\Sing R$ is either $\emptyset$ or $\{\m\}$.
Hence all the specialization-closed subsets of $\Spec R$ contained in $\Sing R$ are $\emptyset$ and $\Sing R$.
We have $\V_{\CM}^{-1}(\emptyset)=\add R$ and $\V_{\CM}^{-1}(\Sing R)=\CM(R)$.
Also, $\lSupp^{-1}(\emptyset)$ is the zero subcategory of $\lCM(R)$ and $\lSupp^{-1}(\Sing R)=\lCM(R)$.
The corollary now follows from Theorem \ref{main}(1).
\end{proof}

Here let us consider an example of a hypersurface which does not have an isolated singularity, and an example of a Gorenstein local ring which is not a hypersurface but a hypersurface on the punctured spectrum.

\begin{ex}\label{countex}
\begin{enumerate}[(1)]
\item
Let $R=k[[x,y]]/(x^2)$ be a one-dimensional hypersurface over a field $k$.
Then we have $\CM(R)=\add\{ R,(x),(x,y^n)\mid n\ge 1\}$ by \cite[Example (6.5)]{Y2} or \cite[Proposition 4.1]{BGS}.
Set $\p=(x)$ and $\m=(x,y)$.
We have $\Sing R=\Spec R=\{\p,\m\}$, hence all specialization-closed subsets of $\Spec R$ (contained in $\Sing R$) are $\emptyset$, $\{\m\}$ and $\Sing R$.
We have $\V_{\CM}^{-1}(\emptyset)=\add R$ and $\V_{\CM}^{-1}(\Sing R)=\CM(R)$.
The subcategory $\V_{\CM}^{-1}(\{\m\})$ of $\CM(R)$ consists of all Cohen-Macaulay modules that are free on the punctured spectrum of $R$, so it coincides with $\add\{ R,(x,y^n)\mid n\ge 1\}$.
Thus, by Theorem \ref{main}(1), all resolving subcategories of $\mod R$ contained in $\CM(R)$ are $\add R$, $\add\{ R,(x,y^n)\mid n\ge 1\}$ and $\CM(R)$.
All thick subcategories of $\lCM(R)$ are the empty subcategory, the zero subcategory, $\underline{\add\{(x,y^n)\mid n\ge 1\}}$ and $\lCM(R)$.
\item
Let $R=k[[x,y,z]]/(x^2,yz)$ be a one-dimensional complete intersection over a field $k$.
Then $R$ is neither a hypersurface nor with an isolated singularity.
All prime ideals of $R$ are $\p=(x,y)$, $\q=(x,z)$ and $\m=(x,y,z)$.
It is easy to see that $R_\p,R_\q$ are hypersurfaces.
Note that all the nonempty specialization-closed subsets of $\Spec R$ (contained in $\Sing R$) are the following four sets: $\v(\p),\v(\q),\v(\p,\q),\v(\p,\q,\m)$.
Theorem \ref{main}(2) says that there exist just four thick subcategories of $\lCM(R)$ containing $\Omega^dk$, and exist just four resolving subcategories of $\mod R$ contained in $\CM(R)$ containing $\Omega^dk$.
\end{enumerate}
\end{ex}

\begin{rem}\label{yogen}
Let $R$ be a Gorenstein local ring.
Corollary \ref{omegadk} implies that in the case where $R$ has an isolated singularity, a thick subcategory of $\lCM(R)$ coincides with $\lCM(R)$ whenever it contains $\Omega^dk$.
Example \ref{countex} especially says that this statement does not necessarily hold if one removes the assumption that $R$ has an isolated singularity.
Indeed, with the notation of Example \ref{countex}(1), $\underline{\add\{(x,y^n)\mid n\ge 1\}}$ is a thick subcategory of $\lCM(R)$ containing $\Omega^dk=\m$ which does not coincide with $\lCM(R)$.
Example \ref{countex}(2) also gives three such subcategories.
\end{rem}

\section{Applications}\label{applic}

In this section, we give some applications of our Theorem \ref{main}.
First, we have a vanishing result of homological and cohomological $\delta$-functors from the category of finitely generated modules over an abstract hypersurface.

\begin{prop}\label{delta}
Let $R$ be an abstract hypersurface and $M$ an $R$-module.
Let $\A$ be an abelian category.
\begin{enumerate}[\rm (1)]
\item
Let $T:\mod R\to\A$ be a covariant or contravariant homological $\delta$-functor with $T_i(R)=0$ for $i\gg 0$.
If there exists an $R$-module $M$ with $\pd_RM=\infty$ and $T_i(M)=0$ for $i\gg 0$, then $T_i(k)=0$ for $i\gg 0$.
\item
Let $T:\mod R\to\A$ be a covariant or contravariant cohomological $\delta$-functor with $T^i(R)=0$ for $i\gg 0$.
If there exists an $R$-module $M$ with $\pd_RM=\infty$ and $T^i(M)=0$ for $i\gg 0$, then $T^i(k)=0$ for $i\gg 0$.
\end{enumerate}
\end{prop}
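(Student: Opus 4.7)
The plan is to apply the hypersurface classification in Theorem \ref{main}(1) to the subcategory of Cohen-Macaulay modules on which $T$ (respectively $T^\bullet$) eventually vanishes, and then shift the resulting vanishing back to the residue field $k$.

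For part (1), I would let $\X$ denote the subcategory of $\CM(R)$ consisting of those Cohen-Macaulay $R$-modules $N$ with $T_i(N)=0$ for $i\gg 0$, and first check that $\X$ is a resolving subcategory of $\mod R$ contained in $\CM(R)$. By hypothesis $R\in\X$, and $\X$ is closed under direct summands since any $\delta$-functor is additive. Closure under extensions is immediate from the long exact sequence of $T$: given $0\to L\to M\to N\to 0$ with $L,N\in\X$, the groups $T_i(M)$ are sandwiched between eventually vanishing terms. Closure under syzygies follows from the same long exact sequence applied to $0\to\Omega N\to F\to N\to 0$ with $F$ free, using $T_i(F)=0$ for $i\gg 0$, which in turn rests on the assumption $T_i(R)=0$ for $i\gg 0$. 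For the contravariant or cohomological variants the direction of the long exact sequence reverses, but the same sandwich argument gives the same conclusions, so $\X$ is resolving by Proposition \ref{resdef}.

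Next I would locate $\Omega^d M$ inside $\X$. Since $R$ is Gorenstein, $\Omega^d M\in\CM(R)$; iterating $T$ on the exact sequences $0\to\Omega^{i+1}M\to F_i\to\Omega^i M\to 0$ and using $T_i(M)=0$ and $T_i(F_i)=0$ for $i\gg 0$ shows $T_i(\Omega^d M)=0$ for $i\gg 0$, so $\Omega^d M\in\X$. Because $\pd_R M=\infty$, the module $\Omega^d M$ is not free, so $\m\in\V(\Omega^d M)\subseteq\V(\X)$. Hence $\V(\Omega^d k)\subseteq\{\m\}\subseteq\V(\X)$, giving $\Omega^d k\in\V_{\CM}^{-1}(\V(\X))$. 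By Theorem \ref{main}(1) one has $\V_{\CM}^{-1}(\V(\X))=\X$, whence $T_i(\Omega^d k)=0$ for $i\gg 0$. A final application of the long exact sequence of $T$ to the syzygy sequences $0\to\Omega^{i+1}k\to F_i\to\Omega^i k\to 0$ transfers this vanishing from $\Omega^d k$ down to $k$. Part (2) is proved identically after replacing $T_i$ by $T^i$.

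The main point requiring attention is simply bookkeeping across the four variance and (co)homological combinations: in each case one must verify that the appropriate long exact sequence yields vanishing on the middle of a short exact sequence from vanishing on the outer two, and vanishing on a syzygy from vanishing on the module together with a free module. No case presents a genuine obstacle, so the substantive content of the argument is entirely the invocation of Theorem \ref{main}(1).
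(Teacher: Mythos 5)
Your proposal is correct and follows essentially the same route as the paper: both define the subcategory $\X$ of Cohen-Macaulay modules on which $T$ eventually vanishes, show it falls under the hypotheses of Theorem \ref{main}(1) (you verify the resolving condition, the paper verifies thickness; these are equivalent over an abstract hypersurface by Proposition \ref{thres2}), place $\Omega^d M$ in $\X$ via shifts, conclude $\Omega^d k\in\X$ from $\V_{\CM}^{-1}(\V(\X))=\X$, and shift back to $k$. The paper merely packages the shift argument as an explicit claim ($T_i(\Omega^n N)\cong T_{i\pm n}(N)$ for $i\gg 0$) and notes that $T_i$ preserves direct sums, but the substance is identical.
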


\begin{proof}
(1) First of all, note that each $T_i$ preserves direct sums (cf. \cite[Proposition II.9.5]{HS2}).

We claim that for any $R$-module $N$ and any integers $n\ge 0$ and $i\gg 0$ we have
$$
T_i(\Omega^nN)\cong
\begin{cases}
T_{i+n}(N) & \text{if }T\text{ is covariant},\\
T_{i-n}(N) & \text{if }T\text{ is contravariant}.
\end{cases}
$$
Indeed, suppose $T$ is covariant.
There is an exact sequence $0 \to \Omega N \to F \to N \to 0$ with $F$ free, and from this we get an exact sequence $T_{i+1}(F) \to T_{i+1}(N) \to T_i(\Omega N) \to T_i(F)$ for $i\ge 0$.
Since $T_i(R)=0$ for $i\gg 0$, we have $T_i(F)=0$ for $i\gg 0$.
Therefore $T_{i+1}(N)\cong T_i(\Omega N)$ for $i\gg 0$.
An inductive argument yields an isomorphism $T_{i+n}(N)\cong T_i(\Omega^nN)$ for $n\ge 0$ and $i\gg 0$.
The claim in the case where $T$ is contravariant is similarly proved.

Consider the subcategory $\X$ of $\CM(R)$ consisting of all Cohen-Macaulay $R$-modules $X$ with $T_i(X)=0$ for $i\gg 0$.
Then it is easily observed that $\X$ is a thick subcategory of $\CM(R)$ containing $R$.
Since $T_i(\Omega^dM)$ is isomorphic to $T_{i+d}(M)$ (respectively, $T_{i-d}(M)$) for $i\gg 0$ if $T$ is covariant (respectively, contravariant), the nonfree Cohen-Macaulay $R$-module $\Omega^dM$ belongs to $\X$.
Hence the maximal ideal $\m$ belongs to $\V(\Omega^dM)$, which is contained in $\V(\X)$, and we have $\V(\Omega^dk)\subseteq\{\m\}\subseteq\V(\X)$.
Therefore $\Omega^dk$ belongs to $\V_{\CM}^{-1}(\V(\X))$, which coincides with $\X$ by Theorem \ref{main}.
Thus we obtain $T_i(\Omega^dk)=0$ for $i\gg 0$.
Since $T_i(\Omega^dk)$ is isomorphic to $T_{i+d}(k)$ (respectively, $T_{i-d}(k)$) for $i\gg 0$ if $T$ is covariant (respectively, contravariant), we have $T_i(k)=0$ for $i\gg 0$, as desired.

(2) An analogous argument to the proof of (1) shows this assertion.
\end{proof}

As a corollary of Proposition \ref{delta}, we obtain a vanishing result of Tor and Ext modules.

\begin{cor}\label{torext}
Let $R$ be an abstract hypersurface.
Let $M$ and $N$ be $R$-modules.
\begin{enumerate}[\rm (1)]
\item
One has $\Tor_i^R(M,N)=0$ for $i\gg 0$ if and only if either $\pd_RM<\infty$ or $\pd_RN<\infty$.
\item
One has $\Ext_R^i(M,N)=0$ for $i\gg 0$ if and only if either $\pd_RM<\infty$ or $\id_RN<\infty$.
\end{enumerate}
\end{cor}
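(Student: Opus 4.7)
The plan is to derive the corollary straightforwardly from Proposition \ref{delta}, using standard characterizations of finite projective/injective dimension over a local ring in terms of vanishing of $\Tor$ (or $\Ext$) against the residue field.

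For part (1), the ``if'' direction is a standard consequence of the definition of Tor via projective resolutions (symmetric in $M$ and $N$), so I would dispose of it in one sentence. For the ``only if'' direction, assume $\Tor_i^R(M,N) = 0$ for $i \gg 0$ and $\pd_R N = \infty$; the goal is to show $\pd_R M < \infty$. Define $T_i := \Tor_i^R(M, -) : \mod R \to \mod R$ (a covariant homological $\delta$-functor). Then $T_i(R) = 0$ for $i \geq 1$, and by hypothesis $T_i(N) = 0$ for $i \gg 0$ while $\pd_R N = \infty$. Proposition \ref{delta}(1) applies and yields $\Tor_i^R(M, k) = T_i(k) = 0$ for $i \gg 0$. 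Since $R$ is local, this forces $\pd_R M < \infty$.

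For part (2), again the ``if'' direction is standard. For the ``only if'' direction, suppose $\Ext_R^i(M,N) = 0$ for $i \gg 0$ and $\pd_R M = \infty$. Consider $T^i := \Ext_R^i(-, N) : \mod R \to \mod R$, a contravariant cohomological $\delta$-functor. Then $T^i(R) = 0$ for $i \geq 1$, and by hypothesis $T^i(M) = 0$ for $i \gg 0$ with $\pd_R M = \infty$. Proposition \ref{delta}(2) gives $\Ext_R^i(k, N) = T^i(k) = 0$ for $i \gg 0$, and this is well known to imply $\id_R N < \infty$ over a local ring.

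There is no real obstacle here: the two assertions follow by applying Proposition \ref{delta} to the obvious choices of functor. The only point worth noting is the choice of variable slot (fix $N$ in the Tor case, fix $N$ in the Ext case and vary $M$), which ensures that the hypothesis ``$T_i(M) = 0$ for $i \gg 0$ with $\pd_R M = \infty$'' of Proposition \ref{delta} is exactly what is being assumed, and that the conclusion ``$T_i(k) = 0$ for $i \gg 0$'' is exactly what forces finite projective or injective dimension of the remaining module via the standard local-ring criterion.
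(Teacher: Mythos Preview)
Your proposal is correct and follows essentially the same approach as the paper. The only cosmetic difference is that in part (1) the paper fixes the second variable and uses $T_i=\Tor_i^R(-,N)$ (assuming $\pd_R M=\infty$ and concluding $\pd_R N<\infty$), whereas you fix the first variable and use $T_i=\Tor_i^R(M,-)$; by the symmetry of $\Tor$ this is immaterial, and part (2) matches the paper exactly.
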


\begin{proof}
The `if' parts are trivial.
In the following, we consider the `only if' parts.

(1) Assume that $M$ has infinite projective dimension.
The functors $\Tor_i^R(-,N):\mod R\to\mod R$ form a covariant homological $\delta$-functor and $\Tor_i^R(R,N)=0$ for $i>0$.
Hence Proposition \ref{delta}(1) implies that $\Tor_i^R(k,N)=0$ for $i\gg 0$, which means that $N$ has finite projective dimension.

(2) Assume that $M$ has infinite projective dimension.
The functors $\Ext_R^i(-,N):\mod R\to\mod R$ form a contravariant cohomological $\delta$-functor and $\Ext_R^i(R,N)=0$ for $i>0$.
Hence Proposition \ref{delta}(2) implies that $\Ext_R^i(k,N)=0$ for $i\gg 0$, which means that $N$ has finite injective dimension.
\end{proof}

The first assertion of Corollary \ref{torext} gives another proof of a theorem of Huneke and Wiegand \cite[Theorem 1.9]{HW}.

\begin{cor}[Huneke-Wiegand]\label{hwii+1}
Let $R$ be an abstract hypersurface.
Let $M$ and $N$ be $R$-modules.
If $\Tor_i^R(M,N)=\Tor_{i+1}^R(M,N)=0$ for some $i\ge 0$, then either $M$ or $N$ has finite projective dimension.
\end{cor}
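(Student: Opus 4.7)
The plan is to reduce to Corollary \ref{torext}(1) by promoting the two consecutive vanishings to a vanishing of $\Tor^R_j(M,N)$ for all sufficiently large $j$. The key tool is the 2-periodicity of syzygies over an abstract hypersurface, namely the fact (cited in the proof of Proposition \ref{thres2}) that $\Omega^2 N \cong N$ up to free summands, i.e. there exist free modules $F,G$ with $\Omega^2 N \oplus F \cong N \oplus G$.

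First I would dispose of the case $i=0$: if $M\otimes_R N=0$ then reducing modulo the maximal ideal and using Nakayama's lemma forces $M=0$ or $N=0$, so one of them trivially has finite projective dimension. So assume $i\ge 1$.

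Next, from the isomorphism $\Omega^2 N \oplus F \cong N \oplus G$ with $F,G$ free and the standard dimension-shifting identity $\Tor^R_{j+2}(M,N)\cong\Tor^R_j(M,\Omega^2 N)$ for $j\ge 1$, one deduces a periodicity
\[
\Tor^R_{j+2}(M,N)\;\cong\;\Tor^R_j(M,N)\qquad (j\ge 1),
\]
since Tor against a free module vanishes in positive degrees. Combined with the hypothesis $\Tor^R_i(M,N)=\Tor^R_{i+1}(M,N)=0$, an immediate induction on $k\ge 0$ gives $\Tor^R_{i+k}(M,N)=0$ for all $k\ge 0$; in particular $\Tor^R_j(M,N)=0$ for $j\gg 0$.

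Finally I would invoke Corollary \ref{torext}(1), which asserts that over an abstract hypersurface the eventual vanishing of $\Tor$ forces one of the two arguments to have finite projective dimension. This closes the argument. There is no real obstacle here: the whole content is the 2-periodicity of Tor, which is already in hand, plus the heavy lifting done in Corollary \ref{torext} (which itself rests on the classification Theorem \ref{main}).
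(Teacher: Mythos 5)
Your overall strategy is the same as the paper's: promote the two consecutive Tor vanishings to eventual vanishing, then invoke Corollary \ref{torext}(1). The paper, however, simply cites the beginning of Huneke--Wiegand's proof of \cite[Theorem 1.9]{HW} for that first step, whereas you attempt to reprove it via periodicity of syzygies --- and that attempt has a gap.

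The problem is the assertion that $\Omega^2 N \cong N$ up to free summands for an \emph{arbitrary} $R$-module $N$. Over a hypersurface, Eisenbud's theorem (the content of \cite[Theorem 5.1.1]{Av2}, and the result being used in the proof of Proposition \ref{thres2}) says that the minimal free resolution of $N$ is only \emph{eventually} $2$-periodic; the period-$2$ behaviour begins once the syzygy becomes maximal Cohen-Macaulay, i.e.\ roughly from $\Omega^{d-\depth N}N$ onward. In the proof of Proposition \ref{thres2} the module $N$ in question lies in $\CM(R)$, which is why the unconditional $2$-periodicity is available there. For a general $N$ of depth less than $d$ it fails: for instance, over $R=k[[x,y]]/(x^2)$ one has $\depth_R k=0$ while $\Omega^2_R k$ is maximal Cohen-Macaulay of depth $1$, so $\Omega^2 k\oplus F\cong k\oplus G$ is impossible for free $F,G$. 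Consequently the deduced periodicity $\Tor^R_{j+2}(M,N)\cong\Tor^R_j(M,N)$ holds only for $j$ above a threshold depending on $\depth N$, and the hypothesis $\Tor_i=\Tor_{i+1}=0$ for a small index $i$ does not automatically push the vanishing past the non-periodic window. Bridging this is precisely the nontrivial content of the cited portion of \cite[Theorem 1.9]{HW} (a depth-counting/rigidity argument), which your proposal omits. As written, the proposal would be correct only under the extra assumption that $i>d-\depth N$ (equivalently, that $\Omega^{i-1}N$ is already Cohen-Macaulay), and so does not prove the corollary in full generality.
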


\begin{proof}
Similarly to the beginning part of the proof of \cite[Theorem 1.9]{HW}, we have $\Tor_j^R(M,N)=0$ for all $j\ge i$.
Now apply Corollary \ref{torext}(1) to get the conclusion.
\end{proof}

\begin{rem}
Several generalizations of Corollaries \ref{torext}(1), \ref{hwii+1} to complete intersections have been obtained by Jorgensen \cite{J1,J2}, Miller \cite{Mi} and Avramov and Buchweitz \cite{AvB}.
\end{rem}

The assertions of Corollary \ref{torext} do not necessarily hold if the ring $R$ is not an abstract hypersurface.

\begin{ex}
Let $k$ be a field.
Consider the artinian complete intersection local ring $R=k[[x,y]]/(x^2,y^2)$.
Then we can easily verify $\Tor_i^R(R/(x),R/(y))=0$ and $\Ext_R^i(R/(x),R/(y))=0$ for all $i>0$.
But both $R/(x)$ and $R/(y)$ have infinite projective dimension, and infinite injective dimension by \cite[Exercise 3.1.25]{BH}.
\end{ex}

Let {\bf H} be a property of local rings.
Let $\hdim_R$ be a numerical invariant for $R$-modules satisfying the following conditions.
\begin{enumerate}[(1)]
\item
$\hdim_R R<\infty$.
\item
Let $M$ be an $R$-module and $N$ a direct summand of $M$.
If $\hdim_R M<\infty$, then $\hdim_R N<\infty$.
\item
Let $0 \to L \to M \to N \to 0$ be an exact sequence of $R$-modules.
\begin{enumerate}[(i)]
\item
If $\hdim_R L<\infty$ and $\hdim_R M<\infty$, then $\hdim_R N<\infty$.
\item
If $\hdim_R L<\infty$ and $\hdim_R N<\infty$, then $\hdim_R M<\infty$.
\item
If $\hdim_R M<\infty$ and $\hdim_R N<\infty$, then $\hdim_R L<\infty$.
\end{enumerate}
\item
The following are equivalent:
\begin{enumerate}[(i)]
\item
$R$ satisfies {\bf H};
\item
$\hdim_RM<\infty$ for any $R$-module $M$;
\item
$\hdim_R k<\infty$.
\end{enumerate}
\end{enumerate}

The conditions (1) and (3) imply the following condition:
\begin{enumerate}[(5)]
\item
Let $M$ be an $R$-module.
If $\pd_RM<\infty$, then $\hdim_RM<\infty$.
\end{enumerate}
Indeed, let $M$ be an $R$-module with $\pd_RM<\infty$.
Then there is an exact sequence $0 \to F_n \to F_{n-1} \to \cdots \to F_1 \to F_0 \to M \to 0$ of $R$-modules with each $F_i$ free.
The conditions (1) and (3)(ii) imply that $\hdim_RF_i<\infty$ for all $0\le i\le n$.
Decomposing the above exact sequences into short exact sequences and applying (3)(i), we have $\hdim_RM<\infty$.

We call such a numerical invariant a {\em homological dimension}.
A lot of homological dimensions are known.
For example, projective dimension $\pd_R$, Gorenstein dimension $\Gdim_R$ (cf. \cite{AB,C}) and Cohen-Macaulay dimension $\CMdim_R$ (cf. \cite{Ge}) coincide with $\hdim_R$ where ${\bf H}$ is regular, Gorenstein and Cohen-Macaulay, respectively.
Several other examples of a homological dimension can be found in \cite{Av}.

A lot of studies of homological dimensions have been done so far.
For each homological dimension $\hdim_R$, investigating $R$-modules $M$ with $\hdim_RM<\infty$ but $\pd_RM=\infty$ is one of the most important problems in the studies of homological dimensions.
In this sense, the following proposition says that an abstract hypersurface does not admit a proper homological dimension.

\begin{prop}
With the above notation, let $R$ be an abstract hypersurface not satisfying {\bf H}.
Let $M$ be an $R$-module.
Then $\hdim_RM<\infty$ if and only if $\pd_RM<\infty$.
\end{prop}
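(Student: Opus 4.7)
The plan is to use Theorem \ref{main}(1) applied to the thick subcategory of Cohen-Macaulay modules of finite $\mathbf{H}$-dimension. The ``if'' direction is immediate from condition (5) already recorded above, so the content is the ``only if'' direction. First I would observe that $R$ must be singular: if $R$ were regular, every $R$-module would have finite projective dimension, so by (5) finite $\hdim_R$, and in particular $\hdim_R k<\infty$, contradicting (4) since $R$ does not satisfy $\mathbf{H}$.

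Next, I would introduce
$$
\X := \{X\in\CM(R)\mid \hdim_R X<\infty\},
$$
and check that it is a thick subcategory of $\CM(R)$ containing $R$. The inclusion $R\in\X$ is condition (1); closure under direct summands is (2); the two-out-of-three property for short exact sequences in $\CM(R)$ is exactly (3)(i),(ii),(iii). Hence $\X$ fits into the one-to-one correspondence of Theorem \ref{main}(1), giving $\V_{\CM}^{-1}(\V(\X))=\X$.

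Now suppose, toward a contradiction, that $\hdim_R M<\infty$ but $\pd_R M=\infty$. Starting from a minimal free resolution and applying (3)(iii) together with $\hdim_R F<\infty$ for every free $F$ (which follows from (1) via (3)(ii), or directly from (5)), induction gives $\hdim_R\Omega^iM<\infty$ for every $i\ge 0$; in particular $\Omega^dM\in\X$. Since $\pd_R M=\infty$, the Cohen-Macaulay module $\Omega^dM$ is not free, so $\m\in\V(\Omega^dM)\subseteq\V(\X)$. Because $k_\p=0$ for all nonmaximal $\p$, we have $\V(\Omega^dk)\subseteq\{\m\}\subseteq\V(\X)$, hence $\Omega^dk\in\V_{\CM}^{-1}(\V(\X))=\X$. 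Thus $\hdim_R\Omega^dk<\infty$.

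Finally, climbing the minimal free resolution $0\to\Omega^{i+1}k\to F_i\to\Omega^ik\to 0$ with $\hdim_R F_i<\infty$, condition (3)(i) forces $\hdim_R\Omega^ik<\infty$ by descending induction on $i$, yielding $\hdim_R k<\infty$. By (4) this says $R$ satisfies $\mathbf{H}$, contradicting the hypothesis. The main point of the argument is the use of Theorem \ref{main}(1) to promote the apparently weak information ``$\m\in\V(\X)$'' to the strong conclusion $\Omega^dk\in\X$; the rest is routine manipulation of the axioms (1)--(4). I do not foresee a serious obstacle, since the thickness of $\X$ in $\CM(R)$ is essentially built into the defining conditions of a homological dimension.
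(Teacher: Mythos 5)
Your argument is correct and follows the paper's own proof essentially step for step: define $\X=\{X\in\CM(R)\mid\hdim_RX<\infty\}$, use axioms (1)--(3) to see it is a thick subcategory of $\CM(R)$ containing $R$, invoke Theorem \ref{main}(1) to get $\X=\V_{\CM}^{-1}(\V(\X))$, and then under the assumption $\pd_RM=\infty$ promote $\Omega^dM\in\X$ to $\Omega^dk\in\X$ and climb the resolution to contradict (4). The only cosmetic difference is your preliminary remark that $R$ must be singular, which the paper leaves implicit (the contradiction hypothesis $\pd_RM=\infty$ already forces $R$ to be singular).
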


\begin{proof}
The condition (5) says that $\pd_RM<\infty$ implies $\hdim_RM<\infty$.
Conversely, assume $\hdim_RM<\infty$.
Let $\X$ be the subcategory of $\CM(R)$ consisting of all Cohen-Macaulay $R$-modules $X$ satisfying $\hdim_RX<\infty$.
It follows from the conditions (1), (2) and (3) that $\X$ is a thick subcategory of $\CM(R)$ containing $R$.
Theorem \ref{main} yields $\X=\V_{\CM}^{-1}(\V(\X))$.
Suppose that $\pd_RM=\infty$.
Then $\Omega^dM$ is a nonfree Cohen-Macaulay $R$-module.
We have an exact sequence $0 \to \Omega^dM \to F_{d-1} \to \cdots \to F_0 \to M \to 0$ of $R$-modules such that $F_i$ is free for $0\le i\le d-1$.
Decomposing this into short exact sequences and using the conditions (1) and (3), we see that $\Omega^dM$ belongs to $\X$.
Hence the maximal ideal $\m$ of $R$ is in $\V(\X)$, and we obtain $\V(\Omega^dk)\subseteq\{\m\}\subseteq\V(\X)$.
Therefore $\Omega^dk$ belongs to $\V_{\CM}^{-1}(\V(\X))=\X$, namely, $\hdim_R(\Omega^dk)<\infty$.
There is an exact sequence $0 \to \Omega^dk \to G_{d-1} \to \cdots \to G_1 \to G_0 \to k \to 0$ of $R$-modules with each $G_i$ free.
Decomposing this into short exact sequences and using the conditions (1) and (3), we get $\hdim_Rk<\infty$.
Thus the condition (4) implies that $R$ satisfies the property {\bf H}, which contradicts our assumption.
Consequently, we must have $\pd_RM<\infty$.
\end{proof}

Next, we want to consider in the one-to-one correspondences obtained in Theorem \ref{main}, what resolving subcategories of $\mod R$ and what thick subcategories of $\lCM(R)$ the closed subsets of $\Spec R$ correspond to.
In general, the structure of the subcategories corresponding to a closed subset is as follows.

\begin{rem}
Let $W$ be a closed subset of $\Spec R$ contained in $\Sing R$, and let $I$ be a defining ideal of $W$.
Then the subcategory $\V^{-1}(W)$ of $\mod R$ consists of all $R$-modules $X$ such that $X_f$ is a projective $R_f$-module for all $f\in I$.
If $I$ is generated by $f_1,\dots,f_n$, then $\V^{-1}(W)$ consists of all $R$-modules $X$ such that each $X_{f_i}$ is a projective $R_{f_i}$-module.
When $R$ is Cohen-Macaulay, restricting to $\CM(R)$, we see that $\V_{\CM}^{-1}(W)$ is the subcategory of $\CM(R)$ consisting of all Cohen-Macaulay $R$-modules $X$ such that $X_f$ is a projective $R_f$-module for all $f\in I$, equivalently, such that each $X_{f_i}$ is a projective $R_{f_i}$-module.
\end{rem}

As the following proposition says, the subcategories of $\CM(R)$ and $\lCM(R)$ corresponding to a closed subset of $\Spec R$ are relatively ``small.''

\begin{prop}
Let $R$ be an abstract hypersurface.
Then one has the following one-to-one correspondences:
$$
\begin{CD}
\left\{
\begin{matrix}
\text{thick subcategories of }\lCM(R)\\
\text{generated by one object}\\
\end{matrix}
\right\}
@. @>{\subseteq}>> @.
\left\{
\begin{matrix}
\text{thick subcategories}\\
\text{of }\lCM(R)\\
\end{matrix}
\right\}
\\
\begin{matrix}
@V{\lSupp}VV @AA{\lSupp^{-1}}A
\end{matrix}
@. @. @.
\begin{matrix}
@V{\lSupp}VV @AA{\lSupp^{-1}}A
\end{matrix}
\\
\left\{
\begin{matrix}
\text{closed subsets of }\Spec R\\
\text{contained in }\Sing R
\end{matrix}
\right\}
@. @>{\subseteq}>> @.
\left\{
\begin{matrix}
\text{specialization-closed subsets of }\Spec R\\
\text{contained in }\Sing R
\end{matrix}
\right\}
\\
\begin{matrix}
@V{\V_{\CM}^{-1}}VV @AA{\V}A
\end{matrix}
@. @. @.
\begin{matrix}
@V{\V_{\CM}^{-1}}VV @AA{\V}A
\end{matrix}
\\
\left\{
\begin{matrix}
\text{resolving subcategories of }\mod R\\
\text{generated by one object in }\CM(R)
\end{matrix}
\right\}
@. @>{\subseteq}>> @.
\left\{
\begin{matrix}
\text{resolving subcategories of }\mod R\\
\text{contained in }\CM(R)
\end{matrix}
\right\}.
\end{CD}
$$
\end{prop}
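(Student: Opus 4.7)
The plan is to leverage Theorem~\ref{main}(1), which already supplies the bijections on the right-hand column between all thick subcategories of $\lCM(R)$, all specialization-closed subsets of $\Spec R$ contained in $\Sing R$, and all resolving subcategories of $\mod R$ contained in $\CM(R)$. What remains is to verify that, under these bijections, the subclass of singly generated subcategories on the left corresponds precisely to the subclass of closed subsets in the middle.

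First I would tie together the two notions of singly generated subcategory. By Corollary~\ref{thresc}, for any Cohen-Macaulay $R$-module $M$ the thick subcategory of $\CM(R)$ generated by $R\oplus M$ coincides with $\res M$; combined with Proposition~\ref{dotdot}, this identifies the thick subcategory of $\lCM(R)$ generated by an object $M$ (taken in $\CM(R)$) with $\underline{\res M}$. Hence singly generated thick subcategories of $\lCM(R)$ correspond bijectively to singly generated resolving subcategories of $\mod R$ contained in $\CM(R)$, compatibly with the side maps in the diagram.

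The forward direction is then quick: for $M\in\CM(R)$, Proposition~\ref{nfsupp}(3) gives $\V(\res M)=\V(M)$, and this is closed in $\Spec R$ by Proposition~\ref{nfsupp}(2); the same closed set equals $\lSupp\underline{\res M}$ by Proposition~\ref{lsuppv}. So both $\V$ and $\lSupp$ send singly generated subcategories into closed subsets contained in $\Sing R$.

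The main step, and the step I expect to be the main obstacle, is the backward direction: realizing every closed $W\subseteq\Sing R$ as $\V(M)$ for a single $M\in\CM(R)$. Once this is done, Theorem~\ref{main}(1) immediately yields $\res M=\V_{\CM}^{-1}(W)$ and $\underline{\res M}=\lSupp^{-1}(W)$, exhibiting both preimages as singly generated. For the construction, I would write $W=\v(\p_1)\cup\cdots\cup\v(\p_n)$ with $\p_1,\ldots,\p_n$ the minimal primes of $W$ (so each $\p_i\in\Sing R\subseteq\s(R)$), set $N=\bigoplus_{i=1}^n R/\p_i$, and take $M=\Omega^d N$. The module $M$ is Cohen-Macaulay as a $d$th syzygy over the $d$-dimensional Cohen-Macaulay ring $R$. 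The inclusion $\V(M)\subseteq\V(N)=\bigcup_i\v(\p_i)=W$ follows from Proposition~\ref{nfsupp}(3) applied to $M\in\res N$ together with Proposition~\ref{variety}(5). The reverse inclusion is the only substantive point: localizing at each $\p_i$ makes $\kappa(\p_i)$ a direct summand of $N_{\p_i}$, so $\Omega^d_{R_{\p_i}}\kappa(\p_i)$ appears as a direct summand of $M_{\p_i}$ up to free summand, and this is nonfree because $R_{\p_i}$ is not regular. Hence each $\p_i\in\V(M)$, and since $\V(M)$ is specialization-closed we conclude $W\subseteq\V(M)$, completing the construction.
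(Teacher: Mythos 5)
Your proposal is correct, and its overall structure matches the paper's: both reduce the claim to showing that, under the bijections of Theorem~\ref{main}(1), singly generated subcategories correspond exactly to closed subsets, and both hinge on the finitely many minimal primes of a closed set.

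The one place you genuinely deviate is the backward direction. The paper, given a resolving subcategory $\X$ with $\V(\X)=\v(\p_1)\cup\cdots\cup\v(\p_n)$ closed, \emph{picks} modules $M_i\in\X$ with $\p_i\in\V(M_i)$ (these exist because $\p_i\in\V(\X)$) and sets $M=M_1\oplus\cdots\oplus M_n$, then checks $\V(\X)=\V(M)$ using specialization-closedness. You instead \emph{construct} a generator directly from the closed set $W$ itself, taking $M=\Omega^d\bigl(\bigoplus_i R/\p_i\bigr)$ and computing $\V(M)=W$ via Propositions~\ref{nfsupp} and~\ref{variety}(5) together with the fact that $\Omega^d_{R_{\p_i}}\kappa(\p_i)$ is nonfree when $R_{\p_i}$ is singular. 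Your version is slightly more self-contained, since it produces an explicit generator without first appealing to the bijection to extract the $M_i$ from $\X$; the paper's version is marginally shorter because it can assume $\X$ is already in hand. Both are fine, and both correctly route the stable-category half through Corollary~\ref{thresc}, Proposition~\ref{dotdot} and Proposition~\ref{lsuppv}.
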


\begin{proof}
We establish a couple of claims.
\setcounter{claim}{0}

\begin{claim}
Let $\X$ be a resolving subcategory of $\mod R$ contained in $\CM(R)$.
Then the following are equivalent:
\begin{enumerate}[\rm (1)]
\item
$\X=\res M$ for some $M\in\CM(R)$;
\item
$\V(\X)$ is a closed subset of $\Spec R$.
\end{enumerate}
\end{claim}

\begin{cpf}
(1) $\Rightarrow$ (2): We have $\V(\X)=\V(M)$ by Proposition \ref{nfsupp}(3), which is a closed subset of $\Spec R$ by Proposition \ref{nfsupp}(2).

(2) $\Rightarrow$ (1): Taking the irreducible decomposition of the closed subset $\V(\X)$, we have $\V(\X)=\v(\p_1)\cup\cdots\cup\v(\p_n)$, where $\p_1,\dots,\p_n$ are prime ideals of $R$.
Then each $\p_i$ is in $\V(\X)$, and there is a Cohen-Macaulay $R$-module $M_i\in\X$ such that $\p_i$ is in $\V(M_i)$.
Set $M=M_1\oplus\cdots\oplus M_n$.
Then $M$ belongs to $\X$.
Let $\p$ be a prime ideal in $\V(\X)$.
Then $\p$ contains some $\p_l$, and we have $\p_l\in\V(M_l)\subseteq\V(M)$.
Since $\V(M)$ is (specialization-)closed, $\p$ is also in $\V(M)$.
It follows that $\V(\X)$ coincides with $\V(M)$, which is equal to $\V(\res M)$ by Proposition \ref{nfsupp}(3).
Theorem \ref{main} shows that $\X$ coincides with $\res M$.
\qed
\end{cpf}

\begin{claim}
Let $\Y$ be a nonempty thick subcategory of $\lCM(R)$.
The following are equivalent:
\begin{enumerate}[\rm (1)]
\item
$\Y$ is generated by one object (as a thick subcategory of $\lCM(R)$);
\item
$\lSupp\Y$ is a closed subset of $\Spec R$.
\end{enumerate}
\end{claim}

\begin{cpf}
This follows from Propositions \ref{dotdot}, \ref{lsuppv}(3), Corollary \ref{thresc} and Claim 1.
\qed
\end{cpf}

Combining the above two claims and Theorem \ref{main}, we obtain the desired one-to-one correspondences.
\end{proof}


\begin{ac}
The author would like to give his deep gratitude to Srikanth Iyengar and the knowledgeable referee, who read the paper carefully and gave an important information on earlier literature and a lot of helpful comments and suggestions.
The author is greatly indebted to Amnon Neeman, Yuji Yoshino, Osamu Iyama, Kiriko Kato, Tokuji Araya and Kei-ichiro Iima for a lot of valuable remarks.
The author also thanks very much the participants of the Seminar on Commutative Algebra at Meiji University which is organized by Shiro Goto, for their kind advices.
\end{ac}


\end{document}